\newcommand{\nocontentsline}[3]{}
\newcommand{\tocless}[3]{\bgroup\let\addcontentsline=\nocontentsline#1{#2}\egroup}
\newcommand{\dist}{{\rm dist}}
\def\Bbb{\mathbb}
\def\Cal{\mathcal}
\def\Dt{\partial_t}
\def\px{\partial_x}
\def\eb{\varepsilon}
\def\R {\mathbb{R}}
\def\<{\left<}
\def\>{\right>}
\def\Ree{\operatorname{Re}}
\def\Nx{\px}
\def\Dx{\px^2}
\def\({\left(}
\def\){\right)}
\newtheorem{proposition}{Proposition}[section]
\newtheorem{theorem}[proposition]{Theorem}
\newtheorem{corollary}[proposition]{Corollary}
\newtheorem{lemma}[proposition]{Lemma}
\theoremstyle{definition}
\newtheorem{definition}[proposition]{Definition}
\newtheorem{remark}[proposition]{Remark}
\numberwithin{equation}{section}
\def \no#1#2#3 {{\bf #1} (#3), #2.}
\def \eds#1#2#3 {#1, #2, #3.}
\title[Large dispersion] {Large dispersion, averaging and attractors: three 1D paradigms}
\author[A. Kostianko, E. Titi and S. Zelik]{ Anna Kostianko${}^1$, Edriss Titi${}^{2,3}$ and Sergey Zelik${}^1$}
\address{${}^1$
University of Surrey, Department of Mathematics,
Guildford, GU2 7XH, United Kingdom, a.kostianko@surrey.ac.uk, s.zelik@surrey.ac.uk.}
\address{${}^2$ Department of Mathematics, Texas A \& M University, 3368 TAMU, College Station, TX 77843-3368, USA, titi@math.tamu.edu.}
\address{${}^3$ Weizmann Institute of Science,
Department of Computer Science
and Applied Mathematics,
P.O. Box 26,
Rehovot, 76100,
Israel, edriss.titi@weizmann.ac.il.}
\subjclass[2000]{35B40, 35B45}
\keywords{Dissipative systems, large dispersion, singular perturbation, attractors, averaging}
\begin{document}
\begin{abstract} The effect of rapid oscillations, related to large dispersion terms, on the dynamics of dissipative evolution equations is studied for the model examples of the 1D complex Ginzburg-Landau and the Kuramoto-Sivashinsky equations. Three different scenarios of this effect are demonstrated. According to the first scenario, the dissipation mechanism is not affected and the diameter of the global attractor remains uniformly bounded with respect to the very large dispersion coefficient. However, the limit equation, as the dispersion parameter tends to infinity, becomes a gradient system. Therefore, adding the large dispersion term actually suppresses the non-trivial dynamics. According to the second scenario, neither the dissipation mechanism, nor the dynamics are essentially affected by the large dispersion and the limit dynamics remains complicated (chaotic). Finally, it is demonstrated in the third scenario that the dissipation mechanism is completely destroyed by the large dispersion, and that the diameter of the global attractor grows together with the growth of the dispersion parameter.
\end{abstract}
\maketitle
\tableofcontents
\section{Introduction}\label{s0}
The study of systems involving rapid oscillations and their averaging is one of the central subjects of the classical theory of dynamical systems which attract a great permanent interest during the last century, see, e.g., \cites{arnold,bo,LM,mit,poi,PS,SVM} and references therein. Roughly speaking, it is well-known that the structure of the averaged equations (as well as their validity) is determined by certain resonance interactions. These resonances typically become very complicated when multi-frequency systems are considered, which makes the corresponding
averaging problem non-trivial and challenging (e.g., due to the presence of small divisors, see, for instance, \cite{arnold} and references therein).
Nevertheless, a lot of averaging results are now available for the case of PDEs (which at least formally contain infinitely many frequencies) in both Hamiltonian and dissipative cases, see, e.g., \cites{BIT,BMN,BLP,bour,bour1,bour2,col1,col2,col3,cv,efz,GuoSimonTiti,Kuksin,lg,tt,ZKO} and references therein.
\par
Very often the analytic structure of the limiting averaged equations is essentially simpler than the structure of the initial problem which allows us to obtain a reasonable description of the initial dynamics involving rapid oscillations using the averaged equations and singular perturbation technique (see, e.g., \cites{W1,W2}). In particular, the presence of rapid oscillations may prevent solutions from blowing up in finite time
(e.g., for the complex Burgers equation with fast rotation, see \cites{BIT,LT}), or may give the global in time regularity (e.g., for the 3D Navier-Stokes equations involving strong Coriolis force term, see \cite{BMN}) or global well-posedness of weak solutions due to the averaging effects in large Fourier modes (e.g., for the KdV equations, see  \cites{BIT, bour1,bour2,GuoSimonTiti}). However, the opposite effects when the presence of rapid oscillations destroys the dissipation mechanism and makes the dynamics essentially more complicated are also known (e.g., for the damped hyperbolic equations, see \cite{zel}). It is remarkable to observe that, even though  the limit averaged equations may remain relatively simple, and dissipative in this case, the global dynamics of the initial system involving rapid oscillations cannot be described by these averaged equations no matter how fast the oscillations are. Notably, it is not always possible to obtain closed system giving the limit for the oscillatory dynamics or it may be even not clear how to split the dependent variables into the "slow" and "fast" parts. In these cases, an approach related with Young measures and the so-called slow observables may help to overcome the problem, see \cites{akst,alt} for the details.
\par
The aim of the present paper is to study
the effect of rapid oscillations induced by large
dispersion on
the long-time dynamics of dissipative PDEs. For simplicity, we restrict ourselves to consider only the 1D
complex Ginzburg-Landau and the Kuramoto-Sivashinsky equations where, on the one hand, the resonances are not complicated and the averaged equations possess a complete description and, on the other hand, a number of non-trivial phenomena which, as believed, have general nature  can be detected.
\par
 To be more
 precise, we consider the following three model problems on $\R$:
 \begin{align}
 \Dt u=(1+iL)\px^2u+\beta
 u-(1+i\omega)u|u|^2;\label{0.eq2}\\
 \Dt u=(1+i\gamma)\px^2u+\beta
 u-(1+i\omega)u|u|^2+L\px^3 u;\label{0.eq1}\\
 \Dt u=-\px^4 u-a\px^2 u+u\px u+L\px^3 u,\label{0.eq3}
 \end{align}
subject to periodic boundary conditions with fundamental periodic domain $(-\pi,\pi)$. Here
$a,\gamma,\omega$ are given real parameters, $\beta$ is a given
complex parameter and $L$ is a large real parameter. In the first two
equations $u$ is assumed to be a complex-valued function:
$u=u_1+iu_2$, and it is real-valued in the third equation.
\par
Based on the analysis given below, we detect three principally different scenarios (paradigms) of how the large dispersion may effect the global dynamics:
\par
{\bf Paradigm I} (corresponds to equation \eqref{0.eq2}). The dissipation mechanism is not affected by large values of the dispersion parameter $L$ and the diameter of the global attractor remains bounded, as $L\to\infty$. However, the large dispersion limit {\it trivializes} the dynamics and the limit averaged equations is a {\it gradient} system, up to some simple change of variables. Thus, the global attractor of the limit equation consists of equilibria and heteroclinic connections only (the so-called regular attractor) and no complicated dynamics is possible in the non-averaged equations, when $L$ is large, see section \ref{s4} for more details.
\par
{\bf Paradigm II} (corresponds to equation \eqref{0.eq1}). Similarly to the previous one, the dissipation mechanism is not affected. However, the complete trivialization of the dynamics, at the infinite dispersion limit, does not happen. Although the global attractor of the limit averaged equations is described by the finite system of ODEs, these ODEs are far from being a gradient system and their dynamics is chaotic (at least for a range of the values of parameters $(\gamma,\beta,\omega)$ that forms an open set in the space of parameters). Then this chaos persists in the initial equations \eqref{0.eq1}, when $L$ is large enough,
see Remark \ref{Rem4.turaev} below, and also \cite{OTZ} for more details.
\par
{\bf Paradigm III} (corresponds to equation \eqref{0.eq3}). The large dispersion destroys the dissipation mechanism and the diameter of the global attractor grows, as $L\to\infty$. In particular, as shown below, see Proposition \ref{Prop4.KSlow}, the $L^2$-norm of the global attractor associated with equation \eqref{0.eq3} grows proportionally to $L$, as $L\to\infty$. Moreover, we provide here rigorous justification to the numerical investigation, reported in \cite{erc}.
\par
The paper is organized as follows.
The resonances and the corresponding infinite dispersion limit  equations are presented in section \ref{s1} for all three cases of equations \eqref{0.eq2}, \eqref{0.eq1} and \eqref{0.eq3}.
The existence of the corresponding global attractors for these limit equations, as well as the upper bounds for the diameter of these global attractors, are verified in section \ref{s2}.
The singular limit, $L\to\infty$, is studied in section \ref{s3}. In particular, the convergence of the global attractors of the Ginzburg-Landau equations to the corresponding global attractors of the infinite dispersion limit  equations, as well as the growing lower bounds for the diameter of the global attractor of the Kuramoto-Sivashinsky equation, are established in section \ref{s3}.
Finally, the gradient structure of the infinite dispersion limit equations, corresponding to the Ginzburg-Landau equations \eqref{0.eq2} and its consequences, are presented in section \ref{s4}.

\section{Preliminaries}\label{s1}
In this section, we introduce
the groups of solution operators associated with the auxiliary linear dispersion
equations:
\begin{align}
\Dt v=L\px^3 v\label{1.gr1},\\
\Dt v=iL\px^2 v\label{1.gr2}
\end{align}
on $\R$, subject to periodic boundary conditions with fundamental periodic domain $(-\pi,\pi)$,
and formulate their simple properties. Moreover, we compute  here
some averages of the non-linearities with respect to the rapid
time oscillations for large values of $L$, generated by these groups, which are crucial for
what follows.
\par
We denote by $H=L^2_{per}(-\pi,\pi)$, the space of
complex-valued $2\pi$-periodic square integrable functions, and
introduce the family $H^s$, $s\in\R$, of Sobolev spaces of periodic
functions with periodic fundamental domain $(-\pi,\pi)$. Let $e_n:=e^{inx}$, $n\in\Bbb Z$, be
the standard orthogonal basis in $H$ and let
$$
v=\sum_{n\in\Bbb Z}v_ne^{inx}.
$$
Neglecting the scalar factor $2\pi$, we define  the $H^s$-norm of $v$  as follows:
\begin{equation}\label{1.norm}
\|v\|_{H^s}^2:=\sum_{n\in\Bbb Z}(|n|^2+1)^s|v_n|^2.
\end{equation}
\begin{lemma}\label{Lem1.iso} The groups of solution operators $\Cal H_L(t)$
and $\Cal F_L(t)$, associated with equations \eqref{1.gr1} and
\eqref{1.gr2} respectively, are  isometries on the Sobolev
spaces $H^s$, for any $s\in\R$. Moreover,  $e_n=e^{inx}$ are
 eigenfunctions satisfying
\begin{equation}\label{1.exp}
\Cal H_L(t)e_n=e^{-iLn^3t}e_n,\ \ \Cal F_L(t)e_n=e^{-iLn^2t}e_n,\ \
n\in\Bbb Z.
\end{equation}
Finally, these groups of solution operators are $2\pi/L$-periodic with respect to time.
\end{lemma}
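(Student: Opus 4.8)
The plan is to diagonalize both evolution equations in the Fourier basis $\{e_n\}_{n\in\Bbb Z}$, where they decouple into scalar ODEs, and then read off all four assertions directly from the explicit mode-by-mode solution. Since $\px^2e_n=-n^2e_n$ and $\px^3e_n=-in^3e_n$, substituting $v=\sum_n v_n(t)e_n$ into \eqref{1.gr1} and \eqref{1.gr2} yields, for each $n$, the linear ODEs $\dot v_n=-iLn^3v_n$ and $\dot v_n=-iLn^2v_n$, whose solutions are $v_n(t)=e^{-iLn^3t}v_n(0)$ and $v_n(t)=e^{-iLn^2t}v_n(0)$. This already gives \eqref{1.exp}: applying the so-constructed operator to $e_n$ (the datum with $v_n(0)=1$ and all other coefficients zero) reproduces $e^{-iLn^3t}e_n$, resp.\ $e^{-iLn^2t}e_n$.

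Next I would \emph{define} $\Cal H_L(t)$ and $\Cal F_L(t)$ on $H^s$ to be the Fourier multipliers with symbols $m_n^{\Cal H}(t):=e^{-iLn^3t}$ and $m_n^{\Cal F}(t):=e^{-iLn^2t}$, and check that these are strongly continuous one-parameter groups: $m_n(0)\equiv 1$ gives $\Cal H_L(0)=\Cal F_L(0)=\mathrm{Id}$, the identity $m_n(t)m_n(s)=m_n(t+s)$ (valid for all real $t,s$ of either sign, since $|m_n|=1$) gives the group law $\Cal H_L(t+s)=\Cal H_L(t)\Cal H_L(s)$ and likewise for $\Cal F_L$, and strong continuity follows from dominated convergence in the series \eqref{1.norm}, using $|m_n(t)-m_n(t_0)|\le L|n|^3|t-t_0|$ (resp.\ $Ln^2|t-t_0|$) together with the bound $|m_n|\le 1$ and summability of $(|n|^2+1)^s|v_n|^2$. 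The isometry property is then immediate: since $|m_n(t)|=1$ for every $n$ and every $t$,
\[
\|\Cal H_L(t)v\|_{H^s}^2=\sum_{n\in\Bbb Z}(|n|^2+1)^s|m_n^{\Cal H}(t)|^2|v_n|^2=\sum_{n\in\Bbb Z}(|n|^2+1)^s|v_n|^2=\|v\|_{H^s}^2
\]
for all $s\in\R$, and the same computation applies verbatim to $\Cal F_L(t)$.

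Finally, the $2\pi/L$-periodicity in time is a consequence of the integrality of $n^2$ and $n^3$: for every $n\in\Bbb Z$,
\[
m_n^{\Cal H}(t+2\pi/L)=e^{-iLn^3t}e^{-2\pi i n^3}=e^{-iLn^3t}=m_n^{\Cal H}(t),\qquad m_n^{\Cal F}(t+2\pi/L)=e^{-iLn^2t}e^{-2\pi i n^2}=m_n^{\Cal F}(t),
\]
so $\Cal H_L(t+2\pi/L)=\Cal H_L(t)$ and $\Cal F_L(t+2\pi/L)=\Cal F_L(t)$ as operators on each $H^s$. I do not expect any serious obstacle: the only point requiring a little care is that $L\px^3$ and $iL\px^2$ are \emph{unbounded}, so one should confirm that the Fourier-multiplier formulas really produce solutions of \eqref{1.gr1}, \eqref{1.gr2} — classical solutions for data in $H^{s+3}$, resp.\ $H^{s+2}$, and mild solutions in general — but on the periodic domain this is entirely standard, and the complete absence of growth in the symbols (in contrast to the parabolic part of \eqref{0.eq2}, \eqref{0.eq1}, \eqref{0.eq3}) is exactly what makes these genuine groups rather than mere semigroups.
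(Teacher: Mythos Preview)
Your proof is correct and follows exactly the approach the paper takes: compute the action on the Fourier basis to obtain \eqref{1.exp}, then read off the isometry and the $2\pi/L$-periodicity from the fact that the multipliers have modulus one and $n^2,n^3\in\Bbb Z$. The paper is in fact much terser than you are --- it merely states that \eqref{1.exp} follows immediately from the equations and that all remaining assertions are immediate corollaries of these explicit expressions --- so your write-up supplies exactly the details the authors leave implicit.
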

Indeed, formulas \eqref{1.exp} follow immediately from equations
\eqref{1.gr1} and \eqref{1.gr2} and all other assertions of the
lemma are immediate corollaries of these explicit expressions.
\par
We are going  to change the variable $u$ in equations \eqref{0.eq1} and
\eqref{0.eq3} using the transformation
\begin{equation}\label{1.ch1}
u(t)=\Cal H_L(t)w(t)
\end{equation}
and using the transformation
\begin{equation}
u(t)=\Cal F_L(t)w(t)
\end{equation}
for equation \eqref{0.eq2}. Then we will average the obtained rapidly oscillating in time terms. To this end, we
need to compute the resounant terms appearing from the
nonlinearities. We will do that in the following several lemmas.
We start with the case of equation \eqref{0.eq1}.

\begin{lemma}\label{Lem1.av1} Let $w\in H^s$, $s>1/2$, and let
\begin{equation}\label{1.non1}
F(\tau,w):=\Cal H_1(-\tau)\circ(\Cal H_1(\tau)w\cdot|\Cal H_1(\tau)w|^2).
\end{equation}
Then, for every fixed $\tau$, the map $w\to F(\tau,w)$ is a
bounded smooth map from $H^s$ to itself:
\begin{equation}\label{1.alg}
\|F(\tau,w)\|_{H^s}\le C_s\|w\|_{H^s}^3,
\end{equation}
with a constant $C_s$ that is independent of $\tau$. Moreover, the operator $F$ is
$2\pi$-periodic with respect to $\tau$, and its time averaging has
the form:
$$
\<F(\cdot,w)\>:=1/(2\pi)\int_0^{2\pi}F(\tau,w)\,d\tau=: N(w),
$$
where the operator $N$ has the following explicit form:
\begin{equation}\label{1.avform}
N(w)=2w\|w\|_{H}^2+\bar w [w,w]-2 w_0|w_0|^2e_0-\sum_{n\ne0}w_n(|w_n|^2+2|w_{-n}|^2)e_n.
\end{equation}
with $[w,v]:=\sum_{n\in\Bbb Z}w_nv_{-n}$ and $w=\sum_{n\in\Bbb Z}w_ne_n$. In particular,
\begin{multline}\label{add1}
N(w)=\(2w_0(\|w\|^2_H-|w_0|^2)+\bar w_0[w,w]\)e_0+\\+\sum_{n\ne0}\(w_n(2\|w\|^2_H-|w_n|^2-2|w_{-n}|^2)+\bar w_{-n}[w,w]\)e_n.
\end{multline}
\end{lemma}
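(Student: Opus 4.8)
The plan is to reduce the lemma to the explicit diagonalization of $\Cal H_1$ given by Lemma \ref{Lem1.iso} together with a standard resonance computation. For the boundedness, smoothness and periodicity: since $\Cal H_1(\tau)$ is an isometry on $H^s$ and complex conjugation preserves the $H^s$-norm, writing $v:=\Cal H_1(\tau)w$ one has
\begin{equation*}
\|F(\tau,w)\|_{H^s}=\|\Cal H_1(-\tau)(v^2\bar v)\|_{H^s}=\|v^2\bar v\|_{H^s}\le C_s\|v\|_{H^s}^2\|\bar v\|_{H^s}=C_s\|w\|_{H^s}^3,
\end{equation*}
where we use that $H^s$ is a Banach algebra for $s>1/2$; the constant $C_s$ is manifestly independent of $\tau$. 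The map $w\mapsto F(\tau,w)$ is the restriction to the diagonal of a bounded trilinear map (with one conjugate-linear slot) between Banach spaces, hence it is smooth. Finally, $\Cal H_1$ is $2\pi$-periodic (take $L=1$ in Lemma \ref{Lem1.iso}), so $F(\cdot,w)$ is $2\pi$-periodic and $\langle F(\cdot,w)\rangle$ is well defined; by the algebra estimate the Fourier series of $F(\tau,w)$ converges in $H^s$ uniformly in $\tau$, so the averaging may be carried out termwise.

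Next I would expand everything in the basis $\{e_n\}$. With $w=\sum_n w_ne_n$ and \eqref{1.exp} we get $\Cal H_1(\tau)w=\sum_n w_ne^{-in^3\tau}e_n$, hence
\begin{equation*}
(\Cal H_1(\tau)w)\cdot|\Cal H_1(\tau)w|^2=\sum_{n,m,k\in\Bbb Z}w_nw_m\bar w_k\,e^{-i(n^3+m^3-k^3)\tau}\,e_{n+m-k},
\end{equation*}
and applying $\Cal H_1(-\tau)$ multiplies the coefficient of $e_{n+m-k}$ by $e^{i(n+m-k)^3\tau}$, so that
\begin{equation*}
F(\tau,w)=\sum_{n,m,k\in\Bbb Z}w_nw_m\bar w_k\,e^{i\Phi(n,m,k)\tau}\,e_{n+m-k},\qquad \Phi(n,m,k):=(n+m-k)^3-n^3-m^3+k^3.
\end{equation*}
Since $\Phi(n,m,k)\in\Bbb Z$, termwise averaging over $\tau\in[0,2\pi]$ annihilates every non-resonant term and gives $N(w)=\sum_{\Phi(n,m,k)=0}w_nw_m\bar w_k\,e_{n+m-k}$.

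The heart of the argument is to solve the resonance relation $\Phi(n,m,k)=0$. Setting $s:=n+m$ and $\ell:=n+m-k$ (so that $\ell+k=s=n+m$), the elementary identity $x^3+y^3=(x+y)^3-3xy(x+y)$ applied to the pairs $(\ell,k)$ and $(n,m)$ yields $\Phi=\ell^3+k^3-n^3-m^3=3s(nm-\ell k)$. Hence $\Phi=0$ if and only if $n+m=0$ or $\ell k=nm$; in the latter case $\ell$ and $k$ are the two roots of $X^2-sX+nm$, so $\{\ell,k\}=\{n,m\}$, which together with $\ell=n+m-k$ forces $k\in\{n,m\}$. I would then split the resonant set as $A\cup B$ with $A=\{n+m=0\}$ and $B=\{k=n\}\cup\{k=m\}$, compute by inclusion--exclusion $\sum_A=\bar w\,[w,w]$, $\sum_B=2w\|w\|_H^2-\sum_n w_n|w_n|^2e_n$ and $\sum_{A\cap B}=2\sum_n|w_{-n}|^2w_ne_n-w_0|w_0|^2e_0$, and assemble $N(w)=\sum_A+\sum_B-\sum_{A\cap B}$; collecting the coefficients of $e_0$ and of $e_n$ for $n\ne0$ then gives \eqref{1.avform}, and a trivial regrouping of coefficients gives \eqref{add1}. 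The only genuine obstacle is the combinatorial bookkeeping: one must keep careful track of the diagonal overlaps ($n=m$, $k=n=m$, $n=0$) in the inclusion--exclusion so that the numerical constants in \eqref{1.avform} come out exactly as stated. Once the identity $\Phi=3(n+m)(nm-\ell k)$ is in hand, the structure of the resonant set is completely transparent and everything else is routine.
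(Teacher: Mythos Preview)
Your proof is correct and follows essentially the same route as the paper: Fourier expand, identify the resonant triples, and assemble $N(w)$ by inclusion--exclusion over the resonant families. The only cosmetic difference is indexing (you write $\bar w_k$ where the paper writes $\bar w_{-m}$) and that you invoke Vieta to analyse $nm=\ell k$, whereas in fact $nm-\ell k=(n-k)(m-k)$ factors directly, recovering the paper's fully symmetric three-factor resonance condition $(n+m)(n-k)(m-k)=0$ and its three families in one stroke.
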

\begin{proof} Indeed, estimate \eqref{1.alg} follows immediately
from the fact that $H^s$ is an algebra, for $s>1/2$, and that $\Cal
H_L(\tau)$ are isometries. So, we only need to compute the average of
$F(\tau,w)$. Indeed, inserting
\begin{equation}\label{add2}
u=\Cal H_1(\tau)w=\sum_{n\in\Bbb Z}e^{-in^3\tau}w_n e_n
\end{equation}
to \eqref{1.non1} and using that $e_k e_l=e_{k+l}$, we have
\begin{equation}
F(\tau,w)=\sum_{n,m,k\in\Bbb Z}e^{-i(n^3+m^3+k^3-(n+m+k)^3)\tau}w_n\overline w_{-m}w_k e_{n+m+k}.
\end{equation}
Thus, the resonance condition reads
$$
n^3+m^3+k^3=(m+n+k)^3,
$$
which is equivalent to $(n+m)(m+k)(n+k)=0$. Each of the resonance cases  $n+m=0$ and
$m+k=0$ give the term
$$
\sum_{n\in\Bbb Z}w_n e_n\sum_{m\in\Bbb Z}\bar w_{-m}w_{-m}=w\sum_{m\in\Bbb Z}|w_m|^2=w\|w\|^2_{H}
$$
in the right-hand side of \eqref{1.avform},
 and the case $n+k=0$ gives
the term
$$
\sum_{m\in\Bbb Z}\bar w_{-m}e_m\sum_{n\in\Bbb Z}w_{n}w_{-n}=\bar w[w,w],
$$
where we have used the fact that $\bar e_m=e_{-m}$. However, these three families of resonances  are not disjoint, but
 intersect when $m=n=k=0$, for the zero mode (i.e., the corresponding term $w_0|w_0|^2e_0$ is  counted three times, so the term $2w_0|w_0|^2e_0$ should be substracted). Moreover, there are three pairwise
intersections at $(m,n,k)=(-l,l,l)$, $(l,l,-l)$ and $(l,-l,l)$, for $l\ne0$ (all counted twice). These intersections give the remaining terms in formula
\eqref{1.avform}. It remains to note that \eqref{add1} is equivalent to \eqref{1.avform} and Lemma \ref{Lem1.av1} is proved.
\end{proof}
The next corollary gives the dissipativity of the non-linear
operator $N(w)$.
\begin{lemma}\label{Lem1.dis} The operator $N$, defined by
\eqref{1.avform}, satisfies the following estimate
\begin{equation}\label{1.dis}
[N(w),\bar w]\ge \|w\|^4_H.
\end{equation}
In particular, the value of $[N(w),\bar w]$ is real for every
$w\in H^s$, $s>1/2$.
\end{lemma}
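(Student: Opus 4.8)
The plan is to avoid the explicit formula \eqref{1.avform} and to use instead the representation $N(w)=\<F(\cdot,w)\>$ from Lemma \ref{Lem1.av1}. The $2\pi$-periodicity of $F$ together with the uniform bound \eqref{1.alg} legitimize interchanging the $\tau$-integration with the (bounded, linear) pairing $u\mapsto[u,\bar w]$, and since $[u,\bar w]=\sum_n u_n\overline{w_n}=\frac1{2\pi}\int_{-\pi}^{\pi}u\bar w\,dx$ (the scalar factor $2\pi$ being systematically suppressed, see \eqref{1.norm}), one first writes
\[
[N(w),\bar w]=\Big[\<F(\cdot,w)\>,\bar w\Big]=\frac1{2\pi}\int_0^{2\pi}[F(\tau,w),\bar w]\,d\tau .
\]
Thus the whole matter reduces to the integrand for a fixed $\tau$.

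The one substantial point is to recognize $[F(\tau,w),\bar w]$ as an $L^4$-norm. Put $v:=\Cal H_1(\tau)w$. Since $\Cal H_1$ is a group and $\Cal H_1(-\tau)$ is an isometry (Lemma \ref{Lem1.iso}), it is invertible with inverse $\Cal H_1(\tau)$, so in the $L^2$-pairing it may be shifted onto the other, conjugated factor, where it is replaced by $\Cal H_1(\tau)$; using the definition \eqref{1.non1} this yields
\[
[F(\tau,w),\bar w]=\frac1{2\pi}\int_{-\pi}^{\pi}\Cal H_1(-\tau)\big(v\,|v|^2\big)\,\bar w\,dx=\frac1{2\pi}\int_{-\pi}^{\pi}v\,|v|^2\,\bar v\,dx=\frac1{2\pi}\int_{-\pi}^{\pi}|v|^4\,dx .
\]
In particular $[F(\tau,w),\bar w]$ is a nonnegative real number for every $\tau$, hence so is $[N(w),\bar w]$ — which already proves the last assertion of the lemma.

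It remains to extract the lower bound. Cauchy--Schwarz on the interval $(-\pi,\pi)$ of length $2\pi$ gives, for any $f$, $\int_{-\pi}^{\pi}|f|^2\,dx\le(2\pi)^{1/2}\big(\int_{-\pi}^{\pi}|f|^4\,dx\big)^{1/2}$, i.e. $\int_{-\pi}^{\pi}|f|^4\,dx\ge(2\pi)^{-1}\big(\int_{-\pi}^{\pi}|f|^2\,dx\big)^2$. Applying this with $f=v=\Cal H_1(\tau)w$ and using that $\Cal H_1(\tau)$ preserves the $L^2(-\pi,\pi)$-norm (Lemma \ref{Lem1.iso}), each integrand $\frac1{2\pi}\int_{-\pi}^{\pi}|v|^4\,dx$ is bounded below by the $\tau$-independent quantity $(2\pi)^{-2}\big(\int_{-\pi}^{\pi}|w|^2\,dx\big)^2$; averaging over $\tau$ preserves this, and since $\int_{-\pi}^{\pi}|w|^2\,dx=2\pi\|w\|_H^2$ by \eqref{1.norm}, the bound is exactly $\|w\|_H^4$, which is \eqref{1.dis}.

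There is no genuine analytic obstacle here: once the $L^4$-representation of the integrand is in place, the rest is Cauchy--Schwarz plus the isometry property, and the only thing to watch is consistent bookkeeping of the factors of $2\pi$ coming from the unnormalized convention $\|v\|_H^2=\sum_n|v_n|^2$ (equivalently, one may run the argument with the genuine $L^2(-\pi,\pi)$ inner product and divide by the appropriate power of $2\pi$ only at the end). For completeness, \eqref{1.dis} can also be proved directly from \eqref{1.avform}: a short computation gives $[N(w),\bar w]=2\|w\|_H^4+|[w,w]|^2-2|w_0|^4-\sum_{n\ne0}|w_n|^2(|w_n|^2+2|w_{-n}|^2)$, whose right-hand side is $\ge\|w\|_H^4$ after grouping the Fourier modes into the pairs $\{n,-n\}$ and invoking $(\sum_j|c_j|)^2+|\sum_j c_j|^2\ge2\sum_j|c_j|^2$; but the averaging argument is shorter and makes the sharpness of the constant in \eqref{1.dis} transparent, equality holding e.g. for $w$ a nonzero constant.
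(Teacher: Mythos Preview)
Your proof is correct and takes a genuinely different route from the paper's. The paper works directly from the explicit Fourier formula \eqref{add1}: it first computes
\[
[N(w),\bar w]=2\|w\|_{H}^4+|[w,w]|^2-2|w_0|^4-\sum_{n\ne0}|w_n|^2(|w_n|^2+2|w_{-n}|^2),
\]
rewrites this as $\|w\|_H^4$ plus two remainder sums, and then shows the remainder is nonnegative via Young's inequality $|ab|\le\tfrac12(|a|^2+|b|^2)$ applied to the cross terms. Your argument bypasses all of this Fourier bookkeeping: you go back to the averaging representation $N(w)=\langle F(\cdot,w)\rangle$, use the unitarity of $\Cal H_1(\tau)$ to identify the integrand $[F(\tau,w),\bar w]$ with $\tfrac1{2\pi}\int_{-\pi}^\pi|\Cal H_1(\tau)w|^4\,dx$, and then the bound \eqref{1.dis} drops out of the $L^2$--$L^4$ Cauchy--Schwarz inequality plus the isometry property. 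This is shorter, makes the reality of $[N(w),\bar w]$ immediate, exhibits the equality case, and---as you may have noticed---transfers verbatim to Lemma~\ref{Lem1.dis1} simply by replacing $\Cal H_1$ with $\Cal F_1$. The paper's approach, on the other hand, is purely algebraic and yields the intermediate identity \eqref{1.estw}, which may be of independent use.
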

\begin{proof}
Indeed, using \eqref{add1}, we have
$$
[N(w),\bar w]=2\|w\|_{H}^4+|[w,w]|^2-2|w_0|^4-\sum_{n\ne0}|w_n|^2(|w_n|^2+2|w_{-n}|^2),
$$
and the fact that $[N(w),\bar w]$ is real is proved.  Let us prove \eqref{1.dis}. To this end, we transform the last formula to a more convenient  form:
\begin{equation} \label{1.estw}
[N(w),\bar w]=\|w\|^4_H + \sum_{n\in\Bbb Z}|w_n|^2\sum_{m\ne n}|w_m|^2 + \sum_{n\in\Bbb Z}w_n w_{-n}\sum_{m\ne \pm  n}\overline{w_m}\overline{w_{-m}}.
\end{equation}
Using now the Young inequality $|ab|\le\frac12(|a|^2+|b|^2)$, we get
\begin{multline}
|\sum_{n\in\Bbb Z}w_n w_{-n}\sum_{m\ne \pm  n}\overline{w_m}\overline{w_{-m}}|
\le \sum_{n\in\Bbb Z}|w_n||w_{-n}|\sum_{m\ne\pm n}|w_m||w_{-m}|\le\\\le\sum_{n\in\Bbb Z}|w_n||w_{-n}|\frac12\sum_{m\ne\pm n}(|w_m|^2+|w_{-m}|^2)= \sum_{n\in\Bbb Z}|w_n||w_{-n}|\sum_{m\ne\pm n}|w_m|^2\le\\\le\frac12\sum_{n\in\Bbb Z}(|w_n|^2+|w_{-n}|^2)\sum_{m\ne\pm n}|w_n|^2=
\sum_{n\in\Bbb Z}|w_n|^2\sum_{m\ne \pm n}|w_m|^2.
\end{multline}
This estimate together with \eqref{1.estw} gives the desired estimate \eqref{1.dis} and finishes the proof of the lemma.
\end{proof}
\begin{remark} We recall that, according to our notations, $H=L^2_{per}((-\pi,\pi),\Bbb C)$,
$$
\|w\|^2_H=\sum_{n\in\Bbb Z}|w_n|^2=\frac1{2\pi}\int_{-\pi}^\pi|w(x)|^2\,dx
$$
and
$$
(v,w)_H=\sum_{n\in\Bbb Z}v_n\bar w_n=[v,\bar w]=\frac1{2\pi}\int_{-\pi}^\pi v(x)\bar w(x)\,dx
$$
(note that $\bar w=\overline{\sum_{n\in\Bbb Z}w_ne_n}=\sum_{n\in\Bbb Z}\bar w_{-n}e_n$). Thus, inequality \eqref{1.dis} can be rewritten in the form
$$
\operatorname{Re}(N(w),w)_H=(N(w),w)_H\ge\|w\|^4_{H}
$$
which indeed a standard form of the dissipativity condition.
\end{remark}

We now formulate the analogue of Lemma \ref{Lem1.av1} for equation
\eqref{0.eq2}.

\begin{lemma}\label{Lem1.av2} Let $w\in H^s$, $s>1/2$, and let
\begin{equation}\label{1.non2}
G(\tau,w):=\Cal F_1(-\tau)\circ(\Cal F_1(\tau)w\cdot|\Cal F_1(\tau)w|^2).
\end{equation}
Then, for every fixed $\tau$, the map $w\to G(\tau,w)$ is a
bounded smooth map from $H^s$ to itself, and its norms are
uniformly bounded with respect to $\tau$:
\begin{equation}\label{1.alg1}
\|G(\tau,w)\|_{H_s}\le C_s\|w\|_{H^s}^3,
\end{equation}
with a constant $C_s$ that is independent of $\tau$. Moreover, the operator $G$ is
$2\pi$-periodic with respect to $\tau$, and its time averaging has
the form:
\begin{equation}\label{1.avform1}
M(w):=\<G(\cdot,w)\>=2w\|w\|_{H}^2-\sum_{n\in\Bbb Z}w_n|w_n|^2e_n=\sum_{n\in\Bbb Z}w_n(2\|w\|^2_H-|w_n|^2)e_n.
\end{equation}
\end{lemma}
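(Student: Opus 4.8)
The plan is to follow verbatim the scheme of the proof of Lemma~\ref{Lem1.av1}; in fact the computation is slightly simpler here, since the quadratic dispersion relation $n\mapsto n^2$ produces fewer resonances than the cubic one. First I would dispose of the elementary assertions: since $H^s$ is a Banach algebra for $s>1/2$ and the operators $\Cal F_L(\tau)$ are isometries on every $H^s$ by Lemma~\ref{Lem1.iso}, one has
\[
\|G(\tau,w)\|_{H^s}=\bigl\|\Cal F_1(\tau)w\cdot|\Cal F_1(\tau)w|^2\bigr\|_{H^s}\le C_s\|\Cal F_1(\tau)w\|_{H^s}^3=C_s\|w\|_{H^s}^3 ,
\]
with $C_s$ the algebra constant, hence independent of $\tau$; smoothness of $w\mapsto G(\tau,w)$ follows from the same two facts, the pointwise cubic nonlinearity being a smooth map of $H^s$ into itself, and the $2\pi$-periodicity in $\tau$ follows from the periodicity of $e^{-in^2\tau}$ recorded in \eqref{1.exp}.

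The heart of the matter is the computation of $\<G(\cdot,w)\>$. Inserting $\Cal F_1(\tau)w=\sum_{n}e^{-in^2\tau}w_ne_n$ together with $\overline{\Cal F_1(\tau)w}=\sum_{m}e^{im^2\tau}\bar w_{-m}e_m$ into \eqref{1.non2}, using $e_ke_l=e_{k+l}$ and then applying $\Cal F_1(-\tau)$, I would arrive at
\[
G(\tau,w)=\sum_{n,m,k\in\Bbb Z}e^{i\bigl((n+m+k)^2-n^2+m^2-k^2\bigr)\tau}\,w_n\bar w_{-m}w_k\,e_{n+m+k}.
\]
Expanding the square shows that the phase is $2(m+n)(m+k)\tau$, so integrating in $\tau$ over $[0,2\pi]$ annihilates every term except those meeting the resonance condition $(m+n)(m+k)=0$, i.e. the two families $m=-n$ and $m=-k$.

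Next I would collect the surviving contributions. The family $m=-n$ gives $n+m+k=k$, hence $\sum_{n,k}w_n\bar w_nw_k\,e_k=\|w\|_H^2\,w$, and, symmetrically, the family $m=-k$ gives $\sum_{n,k}w_n\bar w_kw_k\,e_n=\|w\|_H^2\,w$. These two families are not disjoint: they overlap exactly along the one-parameter diagonal $n=k=-m$, whose contribution $\sum_{n}w_n|w_n|^2e_n$ has thereby been counted twice. By inclusion--exclusion,
\[
M(w)=\<G(\cdot,w)\>=2\|w\|_H^2\,w-\sum_{n\in\Bbb Z}w_n|w_n|^2e_n ,
\]
which is precisely \eqref{1.avform1}, the last equality there being just a regrouping of the two sums.

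The only genuinely delicate point, exactly as in Lemma~\ref{Lem1.av1}, is the bookkeeping of the overlapping resonance families; here, however, it is essentially painless, since there are only two families meeting in a single one-parameter set, with no triple intersection --- in sharp contrast with the cubic-dispersion case, where three families with three pairwise intersections and a triple point had to be tracked. I would also note, for completeness, that the termwise integration in $\tau$ is legitimate because $\tau\mapsto G(\tau,w)$ is a continuous $H^s$-valued map with $\sup_\tau\|G(\tau,w)\|_{H^s}<\infty$ by \eqref{1.alg1}, so its average may be computed Fourier mode by Fourier mode.
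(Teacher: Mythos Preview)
Your proof is correct and follows essentially the same route as the paper: both expand $G(\tau,w)$ as a triple Fourier sum, identify the resonance condition as the vanishing of a product of two linear factors (your $(m+n)(m+k)=0$ is the paper's $(n-m)(k-m)=0$ after the cosmetic reindexing $m\mapsto -m$ in the conjugate factor), and then apply inclusion--exclusion over the two resonance families and their one-parameter overlap. Your added remarks on the legitimacy of termwise integration and the contrast with the cubic-dispersion case are welcome but not in the paper's version.
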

\begin{proof} As before, we only need to compute the average of
$G(\tau,w)$. Inserting
$$
u=\mathcal F_1(\tau)w=\sum_{n\in\Bbb Z}e^{-in^2\tau}w_n e_n
$$
into \eqref{1.non2}, after some elementary calculations we get
\begin{equation}
G(\tau,w)=\sum_{n,m,k\in\Bbb Z}e^{i(n^2-m^2+k^2-(n-m+k)^2)\tau}w_n\overline w_{m}w_k e_{n-m+k}.
\end{equation}
Thus, the resonance condition reads
$$
n^2-m^2+k^2=(n-m+k)^2,
$$
which is equivalent to $(n-m)(k-m)=0$. Thus, we have two families of resonances $n=m$ and $k=m$. Each of them gives the term
$$
\sum_{k\in\Bbb Z}w_ke_k\sum_{n\in\Bbb Z}w_n\bar w_n=w\|w\|^2_H.
$$
Observe that  these resonance families are not disjoint, and intersect when $(n,k,m)=(l,l,l)$, $l\in\Bbb Z$. The common resonance terms are counted twice, so we need to subtract the corresponding term $\sum_{n\in \Bbb{Z}} w_n|w_n|^2e_n$. This gives the desired
 formula \eqref{1.avform1} and finishes the proof of Lemma \ref{Lem1.av2}.
\end{proof}
Analogously to the case of equation \eqref{0.eq1}, we also have
the dissipativity for $M$.
\begin{lemma}\label{Lem1.dis1} The operator $M$, defined by
\eqref{1.avform1}, satisfies
\begin{equation}\label{1.dis1}
[M(w),\bar w]\ge \|w\|^4_H
\end{equation}
and again $[M(w),\bar w]$ is real.
\end{lemma}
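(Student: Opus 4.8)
The plan is to simply insert the explicit expression \eqref{1.avform1} for $M(w)$ into the pairing $[\cdot,\cdot]$ and read off the answer. Recall that $[v,u]=\sum_{n\in\Bbb Z}v_nu_{-n}$ and that the Fourier coefficients of $\bar w$ are $(\bar w)_n=\bar w_{-n}$, so that $(\bar w)_{-n}=\bar w_n$. Hence, with $M(w)=\sum_{n\in\Bbb Z}w_n(2\|w\|_H^2-|w_n|^2)e_n$, the pairing collapses to a diagonal sum:
\[
[M(w),\bar w]=\sum_{n\in\Bbb Z}w_n\bigl(2\|w\|_H^2-|w_n|^2\bigr)\overline{w_n}=2\|w\|_H^2\sum_{n\in\Bbb Z}|w_n|^2-\sum_{n\in\Bbb Z}|w_n|^4=2\|w\|_H^4-\sum_{n\in\Bbb Z}|w_n|^4.
\]
In particular $[M(w),\bar w]$ is manifestly a real number, which already settles the last assertion of the lemma. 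Note that, in contrast with the computation for $N$ in Lemma \ref{Lem1.dis}, no cross terms of the form $[w,w]$ enter here, simply because the averaged nonlinearity $M$ has no $\bar w$-component in \eqref{1.avform1}; this is what makes the present case strictly easier.

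It then remains to verify $2\|w\|_H^4-\sum_n|w_n|^4\ge\|w\|_H^4$, i.e.
\[
\sum_{n\in\Bbb Z}|w_n|^4\le\|w\|_H^4=\Bigl(\sum_{n\in\Bbb Z}|w_n|^2\Bigr)^2.
\]
This is the elementary super-additivity of squaring on the non-negative cone: expanding the square gives $\bigl(\sum_n|w_n|^2\bigr)^2=\sum_n|w_n|^4+\sum_{n\ne m}|w_n|^2|w_m|^2\ge\sum_n|w_n|^4$, since all the omitted mixed terms are non-negative (equivalently, this is the norm-one embedding $\ell^2\hookrightarrow\ell^4$). Combining with the identity above yields \eqref{1.dis1}.

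The only obstacle here is bookkeeping: one must be careful with the convention $(\bar w)_n=\bar w_{-n}$ and with the definition of $[\cdot,\cdot]$ so that the pairing indeed reduces to the diagonal sum $\sum_n|w_n|^2\bigl(2\|w\|_H^2-|w_n|^2\bigr)$. Once this is done, the estimate is immediate; there is no analogue here of the delicate indefinite term $|[w,w]|^2$ that had to be controlled via Young's inequality in \eqref{1.estw} for the operator $N$.
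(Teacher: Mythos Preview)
Your proof is correct and follows essentially the same route as the paper: compute $[M(w),\bar w]=2\|w\|_H^4-\sum_n|w_n|^4$ directly from \eqref{1.avform1}, and then observe that $\sum_n|w_n|^4\le\bigl(\sum_n|w_n|^2\bigr)^2$ by expanding the square. The paper phrases the last step as $2\|w\|_H^4-\sum_n|w_n|^4=\|w\|_H^4+\sum_n|w_n|^2\sum_{m\ne n}|w_m|^2\ge\|w\|_H^4$, which is the same expansion written additively.
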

\begin{proof} Indeed, according to \eqref{1.avform1},
$$
[M(w),\bar w)]=2\|w\|^4_H-\sum_{n\in\Bbb Z}|w_n|^4=\|w\|^4_H+\sum_{n\in\Bbb Z}|w_n|^2\sum_{m\ne n}|w_m|^2\ge \|w\|^4_H,
$$
and the lemma is proved
\end{proof}

We conclude this section by considering the nonlinearity $u\px u$
associated with the Kuramoto-Sivashinsky equation.
\begin{lemma}\label{Lem1.av3} Let $w\in H^s$, $s>1/2$, and let
\begin{equation}\label{1.non3}
\widetilde H(\tau,w):=\Cal H_1(-\tau)\cdot(\Cal H_1(\tau)w \cdot \px\Cal H_1(\tau)w).
\end{equation}
Then, the operator $\widetilde H(\tau,\cdot)$ is well-defined and is smooth as
an operator from $H^s$ to $H^{s-1}$, and the analogue of uniform
bounds \eqref{1.alg} holds, namely
\begin{equation}\label{1.alg2}
\|\widetilde H(\tau,w)\|_{H^{s-1}}\le C_s\|w\|^2_{H^s},
\end{equation}
with a constant $C_s$ that is independent of $\tau$.
 Moreover, this operator is
$2\pi$-periodic with respect to time, and its time averaging has
the form:
\begin{equation}\label{1.av3}
K(w):=\<\widetilde H(\cdot,w)\>=\<w\>_{sp}\px w+ie_0\sum_{n\in\Bbb
Z}nw_nw_{-n}=w_0\sum_{n\in\Bbb Z}in w_n e_n+\(\sum_{n\in\Bbb Z}inw_nw_{-n}\)e_0,
\end{equation}
where $\<w\>_{sp}=w_0=1/(2\pi)\int_{-\pi}^{\pi}w(x)\,dx$.
\end{lemma}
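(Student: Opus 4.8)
The plan is to run the same two-stage argument used for Lemmas \ref{Lem1.av1} and \ref{Lem1.av2}: first dispose of the mapping property \eqref{1.alg2} (and smoothness) by a structural rewriting that avoids any series manipulation, and then expand everything in the Fourier basis $e_n$, locate the resonance set, and read off the time average by orthogonality of the characters $e^{-ic\tau}$.

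For the bound \eqref{1.alg2} I would use the identity $v\,\px v=\tfrac12\px(v^2)$ with $v=\Cal H_1(\tau)w$, together with the fact that $\Cal H_1(-\tau)$, being a Fourier multiplier (Lemma \ref{Lem1.iso}), commutes with $\px$. This gives
\[
\widetilde H(\tau,w)=\tfrac12\,\px\Bigl(\Cal H_1(-\tau)\bigl((\Cal H_1(\tau)w)^2\bigr)\Bigr).
\]
Now $\Cal H_1(\pm\tau)$ are isometries of every $H^s$, $H^s$ is a Banach algebra for $s>1/2$ so $\|(\Cal H_1(\tau)w)^2\|_{H^s}\le C_s\|\Cal H_1(\tau)w\|_{H^s}^2=C_s\|w\|_{H^s}^2$, and $\px\colon H^s\to H^{s-1}$ has norm $\le1$ for the norms \eqref{1.norm}. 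Composing these three facts yields \eqref{1.alg2} with a constant $C_s$ independent of $\tau$; and $w\mapsto\widetilde H(\tau,w)$ is smooth (in fact a bounded homogeneous quadratic map) because it is the composition of bounded linear maps with $v\mapsto v^2$.

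Next I would compute the Fourier representation. By \eqref{1.exp}, $\Cal H_1(\tau)w=\sum_{n}e^{-in^3\tau}w_ne_n$ and $\px\Cal H_1(\tau)w=\sum_n in\,e^{-in^3\tau}w_ne_n$; multiplying, using $e_ke_l=e_{k+l}$, and applying $\Cal H_1(-\tau)$ gives
\[
\widetilde H(\tau,w)=\sum_{n,m\in\Bbb Z} im\,e^{-i\left(n^3+m^3-(n+m)^3\right)\tau}\,w_nw_m\,e_{n+m},
\]
which is manifestly $2\pi$-periodic in $\tau$ since all exponents are integers. Averaging over $\tau\in[0,2\pi]$ annihilates every term except those with $n^3+m^3-(n+m)^3=0$; since $(n+m)^3-n^3-m^3=3nm(n+m)$, the resonance set is $\{nm(n+m)=0\}$, i.e.\ the union of the three lines $n=0$, $m=0$ and $n+m=0$.

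Finally I would sum over these three families and correct for overcounting by inclusion–exclusion, exactly as in Lemma \ref{Lem1.av1}. The line $n=0$ contributes $\sum_m im\,w_0w_m e_m=w_0\,\px w=\<w\>_{sp}\px w$; the line $m=0$ contributes nothing because of the prefactor $m$; the line $n+m=0$ contributes $\sum_n i(-n)w_nw_{-n}e_0=i e_0\sum_n nw_nw_{-n}$ after relabelling $n\mapsto-n$ in the sum. All pairwise and triple intersections of the three lines reduce to the single point $(n,m)=(0,0)$, whose contribution vanishes because of the factor $m$, so no subtraction is needed. Collecting the terms gives precisely \eqref{1.av3}. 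The only point that genuinely needs care is the bookkeeping of the coinciding resonances, but here — in contrast to Lemma \ref{Lem1.av1} — it turns out to be trivial; I expect this to be the sole place where a slip is possible. (One may additionally observe that $\sum_n nw_nw_{-n}=0$ by antisymmetry, so in fact $K(w)=\<w\>_{sp}\px w$, though this simplification is not required for the statement.)
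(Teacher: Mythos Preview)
Your argument is correct and follows essentially the same route as the paper: expand in the Fourier basis, identify the resonance set $nm(n+m)=0$, and observe that one of the three lines contributes nothing because of the derivative prefactor while the pairwise intersections collapse to the origin and hence give no overcount. Your handling of the bound \eqref{1.alg2} via the rewriting $v\,\px v=\tfrac12\px(v^2)$ is a bit more explicit than the paper (which simply asserts the bound as routine), but it is the same idea, not a different method.
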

\begin{proof} As before, we only need to check formula
\eqref{1.av3}. Indeed, inserting \eqref{add2} into \eqref{1.non3}, after some elementary calculations, we get
\begin{equation}\label{1.addksav}
\widetilde H(\tau,w)=\sum_{n,m\in\Bbb Z}in e^{-i(n^3+m^3-(m+n)^3)\tau}w_nw_m,
e_{n+m}
\end{equation}
and, therefore, the resonance condition is
$$
n^3+m^3=(n+m)^3,
$$
which gives $nm(n+m)=0$. Moreover, the case $n=0$ gives nothing due to
the multiplier $in$. So, we only have the cases $m=0$ and $m+n=0$ which give
the first and the second terms in formula \eqref{1.av3} respectively. In contrast to the previous cases, these two families intersect only by $(n,m)=(0,0)$, which gives zero effect on $K$ due to the multiplier $in$. Thus, Lemma \ref{Lem1.av3} is proved.
\end{proof}
\begin{remark}\label{Rem1.real} In the case of real-valued
functions $u$ and $w$ (which is the case for the Kuramoto-Sivashinky equation), we have the additional condition
\begin{equation}\label{1.real}
w_{-n}=\bar w_{n},\ \ n\in\Bbb Z,
\end{equation}
and, therefore, the second term in the expression for $K(w)$
equals zero identically. Moreover, in the case of the Kuramoto-Sivashinsky equation, we
have the additional restriction $w_0=0$.
Thus, in that case the average of $H$ equals zero identically:
\begin{equation}\label{1.av3re}
K(w)\equiv0.
\end{equation}
\end{remark}

\section{Global attractors}\label{s2}
The aim of this section is to formulate and prove some uniform
(with respect to $L$) estimates for the global attractors of
Ginzburg-Landau equations and verify that their analogue does not
take place for the KS equation.   The estimates provided here are formal and can be justified in a rigorous way, for instance, by using Galerkin approximation method and then passing to the limit using the appropriate Aubin compactness theorems, see, e.g., \cites{bv1,cv,tem} and references therein. We start with the case of equation
\eqref{0.eq1}.

\begin{theorem}\label{Th2.attr1} Equation \eqref{0.eq1} is
well-posed in every  $H^s$, with $s\ge0$, and the
following dissipative estimate holds:
\begin{equation}\label{2.dis1}
\|u(t)\|_{H^s}\le Q_s(\|u_0\|_{H^s})e^{-\gamma t}+C_*,
\end{equation}
where the monotone function $Q$ and the positive constants $\gamma$
and $C_*$ depend on $s$, but are independent of $L$, as $L\to\infty$.
Moreover, the following smoothing property is valid:
\begin{equation}\label{2.sm}
\|u(t)\|_{H^s}\le (1+t^{-N_s})\tilde{Q}_s(\|u_0\|_H),\ \ t>0,
\end{equation}
where the monotone function $\tilde{Q}_s$ and the constant $N_s$ are also uniform with respect to $L$, as
$L\to\infty$. Finally, for any two solutions $u_1(t)$ and $u_2(t)$ of problem \eqref{0.eq1}, the following estimate holds:
\begin{equation}\label{add-lip}
\|u_1(t)-u_2(t)\|_{H}\le e^{Kt}\|u_1(0)-u_2(0)\|_{H},
\end{equation}
where the constant $K$ depends on the $H$-norms of $u_i(0)$, $i=1,2$, but is independent of $L$ and $t$.
\end{theorem}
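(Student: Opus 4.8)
The plan is to remove the large dispersive term by the time-dependent substitution \eqref{1.ch1}, reducing \eqref{0.eq1} to a complex Ginzburg--Landau equation with a \emph{non-autonomous} cubic nonlinearity whose structure is controlled uniformly in $L$, and then to run the classical energy and smoothing estimates for that equation.

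First I would set $u(t)=\Cal H_L(t)w(t)$. Since $\Cal H_L(t)$ solves \eqref{1.gr1} and, by Lemma \ref{Lem1.iso}, is a Fourier multiplier, it commutes with $\px^2$, so $\Cal H_L(-t)\circ(1+i\gamma)\px^2\circ\Cal H_L(t)=(1+i\gamma)\px^2$; using also $\Cal H_L(t)=\Cal H_1(Lt)$, equation \eqref{0.eq1} becomes
\[
\Dt w=(1+i\gamma)\px^2 w+\beta w-(1+i\omega)F(Lt,w),
\]
with $F(\tau,\cdot)$ as in \eqref{1.non1}. As $\Cal H_L(t)$ is an isometry on every $H^s$ we have $\|u(t)\|_{H^s}=\|w(t)\|_{H^s}$, so it suffices to prove \eqref{2.dis1}, \eqref{2.sm} and \eqref{add-lip} for $w$ with $L$-independent constants. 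The key observation is that $F(\tau,\cdot)$ keeps the good properties of the cubic nonlinearity uniformly in $\tau$: the algebra bound \eqref{1.alg} of Lemma \ref{Lem1.av1}, and --- writing $v:=\Cal H_1(\tau)w$ and using that $\Cal H_1(\tau)$ is unitary on $H$ --- the energy identity $(F(\tau,w),w)_H=(v|v|^2,v)_H=\tfrac1{2\pi}\int_{-\pi}^{\pi}|v|^4\,dx$, which is real, nonnegative, and, since $\|v\|_{L^2}=\|w\|_{L^2}$, obeys $(F(\tau,w),w)_H\ge\|w\|_H^4$ by the Cauchy--Schwarz inequality.

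Next I would establish the $L^2$-dissipativity: pairing the $w$-equation with $\bar w$ in $H$ and taking the real part, the antisymmetric parts of $(1+i\gamma)\px^2$ and of $(1+i\omega)F$ drop out and the above identity gives $\tfrac12\tfrac{d}{dt}\|w\|_H^2\le-\|\px w\|_H^2+\operatorname{Re}(\beta)\|w\|_H^2-\|w\|_H^4$, whence \eqref{2.dis1} with $s=0$ by Gronwall's lemma and, after integrating, $\int_t^{t+1}\|w(\tau)\|_{H^1}^2\,d\tau\le C$. For $s>0$ one bootstraps in the usual way: the operator $(1+i\gamma)\px^2+\beta$ generates an analytic semigroup whose symbol has modulus $e^{-n^2\tau}|e^{\beta\tau}|$, so its smoothing bounds $H^\sigma\to H^{\sigma+\theta}$ (of order $\tau^{-\theta/2}$ times an exponential) are $L$-independent, and it commutes with $\Cal H_L$; inserting these together with \eqref{1.alg}, the $L^2$-estimate and the uniform Gronwall lemma into the Duhamel formula for $w$ yields \eqref{2.dis1} for all $s\ge0$, and local --- hence, via the a priori bound, global --- well-posedness in $H^s$, $s\ge0$, follows from the same Duhamel/contraction scheme, the cubic term being energy-subcritical in $1$D; cf.\ \cites{bv1,cv,tem}. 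The smoothing estimate \eqref{2.sm} comes from the same Duhamel representation: iterating the ($L$-uniform) parabolic smoothing bounds against \eqref{1.alg} and the a priori $L^2$-bound gives $\|w(t)\|_{H^s}\le C\,t^{-N_s}\tilde{Q}_s(\|w(0)\|_H)$ for $t\in(0,1]$, and for $t\ge1$ one applies \eqref{2.dis1} on $[1/2,\infty)$ with datum $w(1/2)$.

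It remains to prove the Lipschitz estimate \eqref{add-lip}, which I expect to require the most care (one must get $K$ independent of both $t$ and $L$ using only the $H$-norm of the data). Here it is cleanest to work directly with \eqref{0.eq1}: the dispersive term is antisymmetric in $H$, so putting $\phi=u_1-u_2$ and pairing the difference of the two equations with $\bar\phi$ in $H$ makes $(L\px^3\phi,\phi)_H$ purely imaginary and leaves $\tfrac12\tfrac{d}{dt}\|\phi\|_H^2=-\|\px\phi\|_H^2+\operatorname{Re}(\beta)\|\phi\|_H^2-\operatorname{Re}(1+i\omega)(u_1|u_1|^2-u_2|u_2|^2,\phi)_H$. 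Using $|u_1|u_1|^2-u_2|u_2|^2|\le C(|u_1|^2+|u_2|^2)|\phi|$ pointwise, the last term is bounded by $C(\|u_1\|_H^2+\|u_2\|_H^2)\|\phi\|_{L^\infty}^2$; the $1$D interpolation $\|\phi\|_{L^\infty}^2\le C\|\phi\|_H(\|\phi\|_H+\|\px\phi\|_H)\le\eps\|\px\phi\|_H^2+C_\eps\|\phi\|_H^2$, together with the $L$-uniform bound $M:=\sup_{t\ge0}(\|u_1(t)\|_H^2+\|u_2(t)\|_H^2)$ provided by \eqref{2.dis1}, lets us choose $\eps$ so that the $\|\px\phi\|_H^2$-contribution is absorbed by the dissipative term, leaving $\tfrac{d}{dt}\|\phi\|_H^2\le K\|\phi\|_H^2$ with $K=\operatorname{Re}(\beta)+CM^2$ depending only on $\|u_i(0)\|_H$ and on $\gamma,\omega,\beta$, but not on $t$ or $L$; Gronwall's lemma then gives \eqref{add-lip}. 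The one genuine obstacle throughout is the uniformity in $L$, and it is dissolved by the two structural facts used above --- $\Cal H_L$ is an isometry commuting with the linear part (equivalently, $L\px^3$ is antisymmetric in every $H^s$), and the transformed nonlinearity obeys \eqref{1.alg} and the dissipativity identity with constants independent of the fast time --- after which one is simply running the standard parabolic theory of the $1$D complex Ginzburg--Landau equation.
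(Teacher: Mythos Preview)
Your argument is correct and the key structural observation --- that the large dispersion contributes nothing to any $H^s$ energy balance, either because $L\px^3$ is skew-adjoint or, equivalently, because $\Cal H_L(t)$ is an $H^s$-isometry commuting with $\px^2$ --- is exactly what the paper uses. The implementation, however, differs in several places.

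The paper works directly with $u$ throughout and never performs the substitution \eqref{1.ch1} at this stage; the term $L\px^3 u$ simply drops from every real inner product. For the $L^2$ estimate the paper keeps the full $\tfrac1{2\pi}\|u\|_{L^4}^4$ (rather than your weaker $\|w\|_H^4$), which together with \eqref{add-emb} yields an $L$-uniform bound on $\|u\|_{L^6((t,t+1)\times(-\pi,\pi))}$. The smoothing step is then done by the energy multiplier $-t\,\overline{\px^2 u}$, producing \eqref{2.dism}, instead of your Duhamel/analytic-semigroup bootstrap; this avoids having to discuss the mapping properties of $w\mapsto F(\tau,w)$ below the algebra threshold $s>1/2$. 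For the Lipschitz estimate the paper uses H\"older with exponents $3,3/2$ and the interpolation $\|v\|_{L^3}\le C\|v\|_{L^2}^{5/6}\|v\|_{H^1}^{1/6}$, closing via the $L^6$ space-time control just obtained, whereas you use Agmon's inequality $\|\phi\|_{L^\infty}^2\le C\|\phi\|_H\|\phi\|_{H^1}$ and the pointwise-in-time $H$-bound from \eqref{2.dis1}. Your route is slightly cleaner for \eqref{add-lip} (no space-time norms needed), while the paper's multiplier argument for \eqref{2.sm} is more self-contained and sidesteps the low-regularity bootstrap issues inherent in the semigroup approach. Both lead to the same $L$-uniform conclusions.
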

\begin{proof} Since the assertion of the theorem is more or less standard, we give below only brief derivation of the estimates stated in the theorem, see, e.g., \cite{tem} for more details. Moreover, to avoid the technicalities, we derive the dissipative estimate \eqref{2.dis1} for
$s=0$. The estimate in a general case, $s\ge0$, can be obtained in a straightforward way by using the bootstrapping arguments.
\par
 Taking  the inner product in $H$ of equation \eqref{0.eq1} with $u$ and integrating by parts, after
the straightforward transformations, we have
\begin{equation}\label{2.dif0}
\frac12\frac{d}{dt}\|u(t)\|^2_H+\|\Nx u(t)\|^2_{H}+\frac1{2\pi}\|u(t)\|^4_{L^4}\le
\Ree\beta\|u(t)\|^2_H.
\end{equation}
Using that $\frac1{2\pi}\|u(t)\|^4_{L^4}\ge (\Ree\beta+1)\|u(t)\|^2_H-C$,
and applying the Gronwall's inequality to estimate \eqref{2.dif0},
we obtain the following uniform with respect to $L$, as $L\to\infty$,
estimate
\begin{equation}\label{2.dis0}
\|u(t)\|_{H}^2+\int_t^{t+1}\|\Nx u(t)\|_H^2\,dt\le
Q(\|u_0\|_{H})e^{-\gamma t}+C_*,
\end{equation}
which coincides with the desired estimate \eqref{2.dis1}, for $s=0$.
\par
Let us now verify the smoothing property \eqref{2.sm}.
For simplicity, we  deduce estimate \eqref{2.sm} for $s=1$ only (for $s>1$,
it can be obtained in a standard way using  bootstrap
arguments). Indeed, due to the embedding theorem
\begin{equation}\label{add-emb}
L^\infty((t,t+1),L^2((-\pi,\pi))\cap L^2((t,t+1),H^1((-\pi,\pi))\subset
L^6((t,t+1)\times(-\pi,\pi)),
\end{equation}
estimates \eqref{2.dif0} and  \eqref{2.dis0} one can establish an estimate for the
$L^2((t,t+1)\times(-\pi,\pi))$-norm of the nonlinearity. Multiplying now equation
\eqref{0.eq1} by $-t\overline{\partial_x^2u}$ and taking the real part, after the standard
transformations, we  have
\begin{equation}\label{2.dism}
\frac{d}{dt}(t\|\Nx u(t)\|_H^2)+Kt\|\Dx u(t)\|^2_H\le (1+Bt)\|\Nx
u(t)\|^2_H+Ct\|u(t)\|^6_{L^6},
\end{equation}
where $K, B$ and $C$ are positive constants. Integrating this relation with respect to time on $[0,t)$, with $t\in (0,1)$, and using
\eqref{2.dis0}, we deduce estimate \eqref{2.sm} for $s=1$.
\par
Let us now verify the Lipschitz continuity \eqref{add-lip}. Indeed, let $u_1(t)$ and $u_2(t)$ be two solutions of equation \eqref{0.eq1} and let
$v=u_1-u_2$. Then, the function $v$ satisfies
\begin{equation}
\Dt v=(1+i\gamma)\Dx v+\beta v-(1+i\omega)[u_1|u_1|^2-u_2|u_2|^2]+L\partial_x^3v,\ \ v(0)=u_1(0)-u_2(0).
\end{equation}
Taking the inner product in $H$ of this equation with $v$ and arguing as before, we get
$$
\frac12\frac d{dt}\|v\|^2_H+\|\partial_x v\|^2_H-\Ree\beta\|v\|^2_H\le \sqrt{1+w^2}(|u_1|u_1|^2-u_2|u_2|^2,|u_1-u_2|).
$$
The right-hand side of this inequality can be estimated using the H\"older inequality with exponents $3$ and $3/2$, and the interpolation inequality
$\|v\|_{L^3}\le C\|v\|_{L^2}^{5/6}\|v\|_{H^1}^{1/6}$:
\begin{multline}
(|u_1|u_1|^2-u_2|u_2|^2|,|u_1-u_2|)\le C(|u_1|^2+|u_2|^2,|v|^2)\le C(\|u_1\|^2_{L^6}+\|u_2\|^2_{L^6})\|v\|^2_{L^3}\le \\\le C(\|u_1\|^2_{L^6}+\|u_2\|^2_{L^6})\|v\|^{5/3}_H\|v\|_{H^1}^{1/3}\le C(1+\|u_1\|^6_{L^6}+\|u_2\|^6_{L^6})\|v\|^2_{H}+\|\partial_x v\|^2_H.
\end{multline}
Thus, we derived the following estimate:
$$
\frac12\frac d{dt}\|v\|^2_H \le  C(1+\|u_1\|^6_{L^6}+\|u_2\|^6_{L^6})\|v\|^2_{H},
$$
where the constant $C$ is independent of $L$, $u_1$ and $u_2$. Applying the Gronwall inequality to this relation and using that the space-time $L^6$-norm of $u_i$ is under control, due to estimate \eqref{2.dis0} and the embedding \eqref{add-emb}, we derive the desired estimate \eqref{add-lip}.
The uniqueness of a solution follows from \eqref{add-lip}. Thus,
Theorem \ref{Th2.attr1} is proved.
\end{proof}
Let us consider now the Ginzburg-Landau equation \eqref{0.eq2}.
\begin{theorem}\label{Th2.attr2}
 Equation \eqref{0.eq1} is well-posed in every space $H^s$, with $s\ge0$, and
estimates \eqref{2.dis1}, \eqref{2.sm} and \eqref{add-lip}  hold uniformly with
respect to $L$, as $L\to\infty$.
\end{theorem}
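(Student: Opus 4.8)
The plan is to follow the proof of Theorem \ref{Th2.attr1} almost verbatim; the only structural difference is that the large term is now $iL\px^2 u$ instead of $L\px^3 u$. The key observation is that $iL\px^2$ is skew-adjoint on $H=L^2_{per}(-\pi,\pi)$: integrating by parts twice with periodic boundary conditions, and recalling that the $H$-inner product is conjugate-linear in the second argument, one gets $(iL\px^2 u,v)_H=-(u,iL\px^2 v)_H$, so $\Ree(iL\px^2 u,u)_H=0$. Since $iL\px^2$ commutes with every Fourier multiplier, one also has $\Ree(iL\px^2 u,(1-\px^2)^s u)_H=0$ for all $s$, and $\Ree(iL\px^2 u,-\px^2 u)_H=0$. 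Hence the large dispersion term drops out of every energy-type identity used below, and all constants produced are automatically independent of $L$.

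First I would derive the dissipative estimate \eqref{2.dis1} for $s=0$: taking the $H$-inner product of \eqref{0.eq2} with $u$ and the real part, the $iL\px^2 u$ term contributes nothing, $\Ree((1+iL)\px^2 u,u)_H=-\|\px u\|_H^2$, and the nonlinearity gives $-\tfrac{1}{2\pi}\|u\|_{L^4}^4$, so one recovers exactly \eqref{2.dif0}; the Gronwall argument then yields \eqref{2.dis0}. The passage to general $s\ge 0$ is the same bootstrap as in Theorem \ref{Th2.attr1}, pairing with $(1-\px^2)^s u$ (or iterating with $-\px^2$) and again noting that the dispersive term is annihilated in the real part. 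For the smoothing property \eqref{2.sm} one multiplies \eqref{0.eq2} by $-t\,\overline{\px^2 u}$ and takes the real part; the dispersion term contributes $-Lt\,\Ree\big(i\|\px^2 u\|_H^2\big)=0$, so the analogue of \eqref{2.dism} holds with $L$-independent constants, and integration over $t\in(0,1)$ together with \eqref{2.dis0} gives \eqref{2.sm} for $s=1$, then bootstrap for $s>1$. For the Lipschitz estimate \eqref{add-lip}, the difference $v=u_1-u_2$ satisfies the same equation as in Theorem \ref{Th2.attr1} except for the extra linear term $iL\px^2 v$, which again contributes zero to $\tfrac{d}{dt}\|v\|_H^2$; the cubic difference is controlled exactly as there, by H\"older and the interpolation $\|v\|_{L^3}\le C\|v\|_{L^2}^{5/6}\|v\|_{H^1}^{1/6}$, and Gronwall (using the space-time $L^6$-control of the $u_i$) gives \eqref{add-lip}. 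Well-posedness in $H^s$ then follows from these a priori bounds through the standard Galerkin/compactness scheme, with uniqueness coming from \eqref{add-lip}.

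A conceptually cleaner alternative would be to substitute $u(t)=\Cal F_L(t)w(t)$: since $\Cal F_L(t)$ commutes with $\px^2$, the equation for $w$ becomes $\Dt w=\px^2 w+\beta w-(1+i\omega)G(Lt,w)$ with $G$ as in Lemma \ref{Lem1.av2}; by \eqref{1.alg1} this nonlinearity is bounded in $H^s$ uniformly in $t$ and $L$, and it is dissipative since $\Ree(G(\tau,w),w)_H=\tfrac{1}{2\pi}\|\Cal F_1(\tau)w\|_{L^4}^4\ge\|w\|_H^4$; running the same estimates for $w$ and transferring them back via the fact that $\Cal F_L(t)$ is an isometry on every $H^s$ gives the theorem. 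In either route the only mildly delicate point is to verify that no constant secretly carries a factor of $L$ — which is precisely what the skew-adjointness of $iL\px^2$ (respectively the $\tau$-uniformity in Lemma \ref{Lem1.av2}) guarantees — so there is in fact no serious obstacle here beyond this bookkeeping.
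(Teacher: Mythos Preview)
Your proposal is correct and coincides with the paper's approach, which simply says the proof repeats that of Theorem~\ref{Th2.attr1} word by word and is omitted. You have supplied exactly the one observation that makes this repetition go through---the skew-adjointness of $iL\px^2$ with respect to every pairing used---and correctly read the reference to \eqref{0.eq1} as the evident typo for \eqref{0.eq2}.
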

The proof of this theorem follows word by word the proof of
the previous one and, thus, is  omitted.
\par
Due to Theorems \ref{Th2.attr1} and \ref{Th2.attr2}, the solution semigroups $S^L_{GL1}(t)$ and $S^L_{GL2}(t)$ associated with equations \eqref{0.eq1} and \eqref{0.eq2} are well-defined  in $H$:
\begin{equation}\label{add-sem}
S_{GLi}^L(t)u(0):=u(t),\ \ t\ge0,\ \ u(0)\in H,
\end{equation}
where $i=1,2$, and $u(t)$ solves  equations \eqref{0.eq1} or \eqref{0.eq2} if $i=1$ or $i=2$ respectively. Moreover, according to estimate \eqref{2.dis1}, these semigroups are dissipative in $H^s$, $s\ge0$:
\begin{equation}\label{add-dis}
\|S^L_{GLi}(t)u_0\|_{H^s}\le Q_s(\|u_0\|_{H^s})e^{-\gamma_s t}+C_s,
\end{equation}
where $Q_s$, $\gamma_s$ and $C_s$ depend on $s$, but are independent of $L$.
\par
Our next step is to study the global attractors of the introduced semigroups.
For the convenience of the reader we start by recalling the definitions related with global attractors, see, e. g., \cites{bv1,tem} for more details.
\begin{definition} Let $X$ be a Banach space and $S(t):X\mapsto X$, $t\ge0$, be a semigroup in $X$. Then, a set $\mathcal B$ is an {\it absorbing} set for the semigroup $S(t)$ if for any bounded subset $B\subset X$ there exists $T=T(B)$ such that
$$
S(t)B\subset\mathcal B
$$
for all $t\ge T$.
\par
A set $\mathcal B\subset X$ is an {\it attracting} set for the semigroup $S(t)$ if for every bounded $B\subset X$ and every open neighbourhood $\mathcal O(\mathcal B)$ there exists time $T=T(B,\mathcal O)$ such that
$$
S(t)B\subset \mathcal O(\mathcal B)
$$
for all $t\ge T$. The attraction property can be rewritten in the equivalent form using the so-called non-symmetric Hausdorff distance.
Namely, $\mathcal B$ is an attracting set for the semigroup $S(t)$ if, for every bounded $B\subset X$,
$$
\lim_{t\to\infty}\operatorname{dist}_X(S(t)B,\mathcal B)=0,
$$
where the non-symmetric Hausdorff distance between sets $U$ and $V$ of $X$ is defined as follows:
$$
\operatorname{dist}_X(U,V):=\sup_{x\in U}\inf_{y\in V}\|x-y\|_X.
$$
\par
Finally, a set $\mathcal A$ is a global attractor for the semigroup $S(t)$ if the following conditions are satisfied:
\par
1) The set $\mathcal A$ is compact in $X$;
\par
2) The set $\mathcal A$ is strictly invariant: $S(t)\mathcal A=\mathcal A$ for all $t\ge0$;
\par
3) The set $\mathcal A$ is an attracting set for the semigroup $S(t)$.
\end{definition}

The next corollary gives the existence  of  global attractors and their uniform bounds with respect to $L$, as $L\to\infty$, for equations \eqref{0.eq1} and \eqref{0.eq2}.
\begin{corollary}\label{Cor2.attr} Let $S_{GL1}^L(t):H\mapsto H$ and $S_{GL2}^L(t):H\mapsto H$ be the solution semigroups generated by equations
\eqref{0.eq1} and \eqref{0.eq2}, respectively. Then these semigroups
 possess global
attractors ($\Cal A_{GL1}(L)$ and $\Cal A_{GL2}(L)$, respectively)
in the phase space $H$. Moreover, these global attractors are uniformly
bounded with respect to $L$, as $L\to\infty$, in $H^s$, for every $s\ge0$:
\begin{equation}\label{2.unattr}
\|\Cal A_{GL1}(L)\|_{H^s}+\|\Cal A_{GL2}(L)\|_{H^s}\le C_s,
\end{equation}
where $C_s$ depends on $s$, but is independent of $L$. Finally, the global attractors $\Cal A_{GLi}(L)$, for $i = 1,2$, can be described as follows:
\begin{equation}\label{add-str}
\Cal A_{GLi}(L)=\Cal K_{GLi}(L)\big|_{t=0},
\end{equation}
where $\Cal K_{GLi}(L)\subset C_b(\R,H^s)$, are the sets of all solutions of equation \eqref{0.eq1} (resp. \eqref{0.eq2}) which are defined for all $t\in\R$ and are bounded.
\end{corollary}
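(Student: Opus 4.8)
The plan is to deduce the corollary from the standard abstract theory of global attractors for continuous semigroups on Banach spaces (see, e.g., \cites{bv1,tem}), feeding in only the uniform estimates already proved in Theorems \ref{Th2.attr1} and \ref{Th2.attr2}. The criterion I would invoke is the usual one: if a semigroup $S(t)$ on a Banach space $X$ is continuous for every fixed $t\ge0$ (and $(t,u)\mapsto S(t)u$ is continuous) and possesses a compact attracting set, then it admits a global attractor, which moreover coincides with the $\omega$-limit set $\bigcap_{s\ge0}\overline{\bigcup_{t\ge s}S(t)\mathcal B}$ of any absorbing set $\mathcal B$, and also with the set of all points lying on bounded complete trajectories.

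For the existence of $\Cal A_{GLi}(L)$, $i=1,2$, I would proceed in three steps. First, the dissipative estimate \eqref{add-dis} with $s=0$ shows that every ball $\mathcal B_R=\{u\in H:\|u\|_H\le R\}$ with $R>C_0$ is an absorbing set for $S^L_{GLi}(t)$ in $H$. Second, fixing some $s>0$, the smoothing property \eqref{2.sm} shows that $S^L_{GLi}(1)\mathcal B_R$ is bounded in $H^s$, hence precompact in $H$ by the compactness of the Sobolev embedding $H^s\hookrightarrow H$ on the torus; thus $S^L_{GLi}(1)\mathcal B_R$ is a compact attracting set and the semigroup is asymptotically compact. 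Third, the Lipschitz estimate \eqref{add-lip} gives continuity of $u\mapsto S^L_{GLi}(t)u$ on $H$ for each $t$ (continuity in $t$ being immediate from the equation). The abstract theorem then produces $\Cal A_{GLi}(L)\subset H$.

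For the uniform bound \eqref{2.unattr} I would bootstrap. Since $\Cal A_{GLi}(L)$ is compact it is bounded in $H$, so by strict invariance every $u_0\in\Cal A_{GLi}(L)$ can be written $u_0=S^L_{GLi}(t)u_{-t}$ with $u_{-t}\in\Cal A_{GLi}(L)$; applying \eqref{add-dis} with $s=0$ and letting $t\to\infty$ gives $\|u_0\|_H\le C_0$, so $\|\Cal A_{GLi}(L)\|_H\le C_0$ with $C_0$ independent of $L$. Then writing $u_0=S^L_{GLi}(1)u_{-1}$ with $\|u_{-1}\|_H\le C_0$ and using \eqref{2.sm} gives $\|u_0\|_{H^s}\le 2\tilde Q_s(C_0)=:C_s$, again independent of $L$; taking the supremum over $u_0$ yields \eqref{2.unattr}. (Alternatively, once $\Cal A_{GLi}(L)$ is known to be bounded in $H^s$ one may run the $H^s$-version of \eqref{add-dis} with $t\to\infty$ to recover the sharp constant $C_s$ of \eqref{add-dis}.)

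Finally, the identity \eqref{add-str} is the standard description of the attractor: if $u(\cdot)\in C_b(\R,H)$ is a complete bounded trajectory, then boundedness of $\{u(t-\tau)\}_{\tau\ge0}$ together with the attraction property and strict invariance force $u(t)\in\Cal A_{GLi}(L)$ for all $t$; conversely, given $u_0\in\Cal A_{GLi}(L)$, the identity $\Cal A_{GLi}(L)=S^L_{GLi}(\tau)\Cal A_{GLi}(L)$ lets one pull $u_0$ back for all negative times while staying inside $\Cal A_{GLi}(L)$, which constructs the required bounded complete trajectory. Applying the smoothing estimate \eqref{2.sm} along such trajectories shows they are bounded in $H^s$ and, by the equation, continuous into $H^s$, so $\Cal K_{GLi}(L)\subset C_b(\R,H^s)$ for every $s\ge0$. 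I do not expect any genuine obstacle here: the substantive content sits entirely in Theorems \ref{Th2.attr1}--\ref{Th2.attr2}, and the only point requiring attention is to observe that none of the constants there depends on $L$, so that the uniform bound \eqref{2.unattr} is inherited for free.
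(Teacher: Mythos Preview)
Your proposal is correct and follows essentially the same route as the paper: verify continuity via \eqref{add-lip}, build a compact absorbing set by applying the smoothing estimate \eqref{2.sm} to the $H$-absorbing ball, invoke the abstract attractor theorem, and read off the uniform $H^s$-bound from the fact that the compact absorbing set is uniformly bounded in $H^s$. Your treatment is slightly more expansive---you spell out the invariance/bootstrap argument for \eqref{2.unattr} and the bounded-complete-trajectory description \eqref{add-str} explicitly, whereas the paper simply cites these as consequences of the abstract theory---but there is no substantive difference.
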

\begin{proof}
Indeed, according to the abstract theorem on the existence of a global attractor existence, see, e. g., \cites{bv1,tem},  we need to verify that
\par
1) Operators $S_{GLi}^L(t)$ are continuous in $H$, with respect to the initial data, for every fixed $t$;
\par
2) Semigroups $S_{GLi}(t)$ possess {\it compact} absorbing sets in $H$.
\par
Note that the first assertion is an immediate corollary of estimate \eqref{add-lip}, so we only need to verify the existence of a compact absorbing set. According to estimate \eqref{add-dis}, the set
$$
\mathcal B:=\{u\in H, \|u\|^2_H\le 2C_*\}
$$
is an absorbing set for the semigroups $S^L_{GLi}(t)$, $i=1,2$. However, this set is not compact in $H$.
To overcome this difficulty, we consider the sets
$$
\mathcal B_i:=S_{GLi}^L(1)\mathcal B,\ \ i=1,2.
$$
Then, due to estimate \eqref{2.sm}, the sets $\mathcal B_i$ are uniformly (with respect to $L$, as $L\to\infty$) bounded in $H^s$, for all $s>0$, and, therefore, they are compact in $H$. Thus, all of the assumptions of the abstract global attractor existence theorem are verified and, consequently,
the global attractors $\mathcal A_{GLi}(L)$, $i=1,2$, exist. Since the global attractor is always a subset of an absorbing set, estimate \eqref{2.unattr} follows from the fact that the absorbing sets $\mathcal B_i$ are uniformly bounded, with respect to $L$, in any $H^s$. The description \eqref{add-str} also follows from the above mentioned global attractor existence theorem and the corollary is proved.
\end{proof}
\begin{remark} It is well-known (see, e.g., \cite{DT}) that the global attractors $\Cal A_{GL1}$
and $\Cal A_{GL2}$ are not only smooth, but also analytic (belong
to certain class of Gevrey regularity). Moreover, arguing in a standard way, one can
also show that they are uniformly (with respect to $L\to\infty$)
bounded in the appropriate Gevrey norm.
\end{remark}
We now turn to the case of Kuramoto-Sivashinsky equation, where the
situation is a bit different.
\begin{theorem}\label{Th2.attr3} Equation \eqref{0.eq3} possesses a unique solution
$u(t)\in H_0:=\{u\in H,\ \ \<u\>_{sp}=0\}$
for every $u_0\in H_0$, and the following estimate holds:
\begin{equation}\label{2.dis3}
\|u(t)\|^2_H\le \|u(0)\|^2_He^{-t}+C(L^2+1),
\end{equation}
where the positive constant $C$  is
independent of $L$, but depends on the parameter $a$.
\end{theorem}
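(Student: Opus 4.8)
The plan is to follow the standard parabolic scheme: reduce everything to the a priori estimate~\eqref{2.dis3} and prove the latter by a Nicolaenko--Scheurer--Temam type background-function argument; the one point that needs care, in comparison with the classical treatment of the Kuramoto--Sivashinsky equation, is the effect of the dispersive term $L\px^3u$, which is in fact what produces the factor $L^2$ in~\eqref{2.dis3}. First, for the well-posedness part: since $\<\px^4u\>_{sp}=\<\px^2u\>_{sp}=\<\px^3u\>_{sp}=0$ and $\<u\px u\>_{sp}=\tfrac12\<\px(u^2)\>_{sp}=0$, the spatial mean is conserved along the flow, so the subspace $H_0$ is invariant; local well-posedness in $H_0^s$, say for $s=2$, is completely standard, since the leading operator $-\px^4$ generates an analytic semigroup dominating the lower-order terms $-a\px^2u$, $L\px^3u$ and the locally Lipschitz nonlinearity $u\mapsto u\px u$ (bounded, e.g., from $H^2_0$ into $H^{-1}_0$), see e.g.~\cite{tem}; global existence then follows from~\eqref{2.dis3} together with the instantaneous parabolic smoothing $H_0\to H^s_0$, and uniqueness from the usual energy estimate for the difference of two solutions and Gronwall's inequality. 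So it remains to derive~\eqref{2.dis3}, which I would do formally, the rigorous justification via Galerkin approximations being exactly as in Theorems~\ref{Th2.attr1} and~\ref{Th2.attr2}.

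The heart of the matter is the a priori estimate. I would look for the solution in the form $u=v+\Phi$, where $\Phi\in H_0^4$ is a fixed, smooth, $L$-independent function to be chosen below. Then $v$ solves
\begin{equation}\label{pl-v}
\Dt v=-\px^4 v-a\px^2 v+(v+\Phi)\px(v+\Phi)+L\px^3 v+g_0+Lg_1,\qquad g_0:=-\px^4\Phi-a\px^2\Phi,\quad g_1:=\px^3\Phi.
\end{equation}
Taking the inner product in $H$ of~\eqref{pl-v} with $v$, integrating by parts, and using that $\px^3$ is skew-symmetric, so $(\px^3v,v)_H=0$, together with the identity (write $p:=v+\Phi$)
\begin{equation}\label{pl-nl}
\big((v+\Phi)\px(v+\Phi),v\big)_H=(p\px p,p)_H-(p\px p,\Phi)_H=\tfrac13\<\px(p^3)\>_{sp}+\tfrac12\big((\px\Phi)p,p\big)_H=\tfrac12\big((\px\Phi)(v+\Phi),v+\Phi\big)_H
\end{equation}
(the first summand vanishing), and then expanding $(v+\Phi)^2$ and using once more $\<\px(\Phi^3)\>_{sp}=0$ and $\big((\px\Phi)\Phi,v\big)_H=-\tfrac12(\Phi^2,\px v)_H$, one arrives at the energy identity
\begin{equation}\label{pl-en}
\tfrac12\tfrac{d}{dt}\|v\|^2_H+\Big(\|\px^2v\|^2_H-a\|\px v\|^2_H-\tfrac12\big((\px\Phi)v,v\big)_H\Big)=(g_0,v)_H+L(g_1,v)_H-\tfrac12(\Phi^2,\px v)_H.
\end{equation}
Thus the destabilizing term $a\|\px v\|^2_H$ and the nonlinearity (which is invisible to the plain $L^2$-estimate, since $(u\px u,u)_H=0$) have been traded for the zeroth-order term $-\tfrac12(\px\Phi)v^2$, at the price of the $L$-dependent linear forcing $g_0+Lg_1$.

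The next step is coercivity. I would invoke the classical spectral lemma for the Kuramoto--Sivashinsky operator (see, e.g.,~\cite{tem} and the references therein): for the given $a$ one can choose the fixed $\Phi\in H_0^4$ so that
\begin{equation}\label{pl-spec}
\|\px^2v\|^2_H-a\|\px v\|^2_H-\tfrac12\big((\px\Phi)v,v\big)_H\ \ge\ c_0\|v\|^2_H+\tfrac12\|\px^2v\|^2_H,\qquad v\in H^2_0,
\end{equation}
with some $c_0=c_0(a)>0$. The remaining terms on the right of~\eqref{pl-en} are harmless: after integration by parts and Young's inequality one gets $|(g_0,v)_H|\le\tfrac18\|\px^2v\|^2_H+\tfrac{c_0}4\|v\|^2_H+C_a$ and $\tfrac12|(\Phi^2,\px v)_H|\le\tfrac18\|\px^2v\|^2_H+\tfrac{c_0}4\|v\|^2_H+C_a$, while the dispersive term contributes $|L(g_1,v)_H|\le\tfrac{c_0}4\|v\|^2_H+\tfrac1{c_0}L^2\|\px^3\Phi\|^2_H$ --- this is the only place where $L$ enters, and it enters quadratically. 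Combining with~\eqref{pl-spec} (the $\tfrac14\|\px^2v\|^2_H$ of ``debt'' being swallowed by the reserved $\tfrac12\|\px^2v\|^2_H$), identity~\eqref{pl-en} gives $\tfrac{d}{dt}\|v\|^2_H+\tfrac{c_0}2\|v\|^2_H\le C_a(L^2+1)$ with $C_a$ independent of $L$, whence, by Gronwall, $\|v(t)\|^2_H\le\|v(0)\|^2_H e^{-c_0t/2}+C_a(L^2+1)$; passing back through $u=v+\Phi$ and adjusting the additive constant (and, if one insists on the precise exponent $1$, the decay rate, which is immaterial) yields~\eqref{2.dis3}.

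I expect~\eqref{pl-spec} to be the only genuinely nontrivial point. Unlike in the Ginzburg--Landau cases of Theorems~\ref{Th2.attr1} and~\ref{Th2.attr2}, the nonlinearity of~\eqref{0.eq3} is orthogonal to $u$ in $H$, so a direct $L^2$ energy estimate sees only the linear part and fails for $a>1$ (the low modes being linearly unstable); the very purpose of the substitution $u=v+\Phi$ is to make the nonlinearity felt, by turning it into the zeroth-order term $\tfrac12(\px\Phi)v$, and the delicate classical task is to construct a periodic mean-zero $\Phi$ for which the resulting fourth-order operator is uniformly positive on $H^2_0$ (the constant $C_a$ being, in general, large in $a$). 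By contrast, the appearance of $L^2$ in~\eqref{2.dis3} is essentially forced: the background substitution converts the dispersion into the $L$-proportional forcing $L\px^3\Phi$, which cannot be cancelled by a fixed $\Phi$ --- in agreement with the fact, established later, that the dissipation mechanism for~\eqref{0.eq3} is genuinely not uniform in $L$.
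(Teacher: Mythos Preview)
Your argument has a real gap at the coercivity step~\eqref{pl-spec}: no fixed zero-mean $\Phi$ can make that inequality hold for \emph{all} $v\in H^2_0$ once $a>\tfrac12$. Test~\eqref{pl-spec} with $v=\cos x$ and with $v=\sin x$ and add: since $\cos^2 x+\sin^2 x\equiv1$ and $\int_{-\pi}^{\pi}\px\Phi\,dx=0$, the $\Phi$-terms cancel and you are left with $\tfrac12-a\ge c_0$, which forces $c_0<0$ for $a>\tfrac12$. More invariantly, on each two-dimensional block $\operatorname{span}\{\cos(nx),\sin(nx)\}$ the multiplication operator $v\mapsto-\tfrac12(\px\Phi)v$ has trace zero by the same identity, so the trace of the form on the left of~\eqref{pl-spec} restricted to that block equals $n^2(n^2-2a)$, negative for $n=1$ and $a>\tfrac12$; hence at least one eigenvalue is negative regardless of $\Phi$. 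This is exactly why the Nicolaenko--Scheurer--Temam lemma is stated only for \emph{odd} data (where $\cos x$ is excluded), and why extending to general $u_0\in H_0$ is a genuine issue, not a formality.

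The paper's remedy (following Goodman~\cite{g94}) is to make the background \emph{time-dependent}. One first proves the coercivity only on a codimension-one subspace: Lemma~\ref{Lem2.phi} gives an odd $\phi$ with $\|\px^2 w\|^2_H-(w\px\phi,w)_H\ge M\|w\|^2_H$ for all $w\in H^2$ with $w(0)=0$ (this pointwise constraint kills the $\cos$-obstruction). Then one sets $v(t)=u(t)-\phi_{s^*(t)}$, where $s^*(t)$ minimises $s\mapsto\|u(t)-\phi_s\|_H$; the resulting orthogonality $(v,\px\phi_{s^*})_H=0$, combined with $\langle v\rangle_{sp}=0$, lets one subtract the constant $v(t,s^*)$ and reduce to a function vanishing at $s^*$, to which Lemma~\ref{Lem2.phi} does apply (this is computation~\eqref{2.est2}). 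The price is that $s^*(t)$ need not be unique or continuous, so even differentiating $t\mapsto\|v(t)\|^2_H$ requires a separate argument (Lemma~\ref{Lem2.just-add}). Apart from this one step your write-up is correct and matches the paper: in particular, your identification of $L\px^3\Phi$ as the sole $L$-dependent forcing, and the consequent $C(L^2+1)$ in~\eqref{2.dis3}, is exactly how the $L$-dependence enters.
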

\begin{proof} We  only  verify below the    dissipative
estimate  \eqref{2.dis3}. The existence and uniqueness can be checked in a usual way (see, e.g.,  \cite{tem}).
 To this end, we need the following standard lemma (see, e.g., \cites{cees,g94,NST,ot,tem}).
\begin{lemma}\label{Lem2.phi} For every $M>0$, there exists an odd function
$\phi=\phi(M)\in C^\infty\cap H_0$ such that the following inequality
\begin{equation}\label{2.phi}
\|\Dx w\|^2_{H}- ( w \Nx \phi, w)\ge M\|w\|_{H}^2
\end{equation}
holds for all $w\in H^2$ such that $w(0)=0$.
\end{lemma}

Following \cite{g94}, we  fix $\phi$ from Lemma \ref{Lem2.phi} with $M$ being large enough, consider a family  of shifted functions
$\phi_s(x):=\phi(x-s)$, $s\in\R$, and
introduce a functional
\begin{equation}\label{add-min}
F(u(t)):=\min_{s\in[-\pi,\pi]}\|u(t)-\phi_s\|_{H}^2,
\end{equation}
where $u$ is a solution of equation \eqref{0.eq3}. Obviously, the
minimum exists and is achieved for some value $s^*=s(t)$ which satisfies the
following orthogonality condition:
\begin{equation}\label{2.ort}
(u(t)-\phi_{s^*},\Nx \phi_{s^*})=0.
\end{equation}
Let now $v(t):=u(t)-\phi_{s^*(t)}$. Then, this function {\it formally} satisfies
the equation
\begin{equation}\label{2.eqmod}
\Dt u=\Dt v+\Dt s^*(t)\Nx \phi_{s^*}=-\Nx^4v-a\Dx v+\phi_{s^*}\Nx v+\Nx \phi_{s^*}v+v\Nx v+L\Nx^3 v+f_L(t),
\end{equation}
where $f_L(t):=-\Nx^4\phi_{s^*}-a\Dx
\phi_{s^*}+\phi_{s^*}\Nx\phi_{s^*}+L\Nx^3\phi_{s^*}$. Multiplying
this equation, again formally, by $v$, integrating by $x$ and using the
orthogonality condition \eqref{2.ort}, we obtain
\begin{equation}\label{2.est1}
\frac{1}{2} \frac{d}{dt} \|v\|_{H}^2+\|\Dx v\|^2_{H}- \frac{1}{2}(v \Nx \phi_{s^*},v)=a\|\Nx
v\|^2_{H}+(f_L,v).
\end{equation}
In addition, we claim that the inequality
\eqref{2.phi} with $\phi=\phi_{s^*(t)}$ is satisfied for $w=v(t)$, for every $t$
(although $v(t,s^*)\ne0$ in general). Indeed, from Lemma
\ref{Lem2.phi}, we know that \eqref{2.phi} with $\phi=\phi_{s^*}$
holds for any $w\in H^2$ such that $w(s^*)=0$.
Let now  $K:=v(t,s^*)$ and
$w=v-K$. Then, using again the orthogonality condition and the facts that
 $\<v\>_{sp}=\<\Nx \phi_{s^*}\>_{sp}=0$ and $w(s^*)=0$, we get
\begin{multline}\label{2.est2}
\|\Dx v\|^2_H-(\Nx \phi_{s^*}v,v)=\|\Dx w\|^2_{H}-(\Nx \phi_{s^*}w,w)-2K(\Nx \phi_{s^*},v)+
 K^2\<\Nx \phi_{s^*}\>_{sp}=\\= \|\Dx w\|^2_H-(\Nx \phi_{s^*}w,w)\ge
M\|w\|^2_H=\\=M(\|v\|^2_{H}-2 K\<v\>_{sp}+ K^2)=M\|v\|^2_H+
 MK^2\ge M\|v\|^2_H.
 \end{multline}
Thus, estimates \eqref{2.est1} and \eqref{2.est2} give
\par

%
%
\begin{equation}\label{2.est2_odd}
\frac{d}{dt}\|v\|^2_H+\|v\|^2_{H^2}+ M\|v\|^2_H\le
2a\|v\|_{H^1}^2+2|(f_L,v)|.
\end{equation}
It remains to recall that $\|f_L(t)\|_{H}\le C(|L|+1)$ and that, due to interpolation, we have
\begin{equation}\label{add3}
2 a\|v\|^2_{H^1}\le  \|v\|^2_{H^2}+\left( M-2\right)\|v\|_H^2
\end{equation}
if $M$ is large enough, depending on the parameter $a$. This gives
\begin{equation}\label{add-35}
\frac{d}{dt}\|v\|^2_{H}+\|v\|^2_H\le\|f_L\|^2_H\le C(L^2+1),
\end{equation}
and by the Gronwall's inequality we {\it formally} derive the desired estimate \eqref{2.dis3}.

However, there is still an essential gap in the proof, namely, although the minimizer $s^*=s^*(t)$ for  problem \eqref{add-min} exists, it is non-necessarily unique. To overcome this problem, we may select one-valued (measurable) branch of the multi-valued function $s^*(t)$ (which will be also denoted by $s^*(t)$). More essential is the fact that $s^*(t)$ may be non-differentiable and even have discontinuities (jumps) at
 the points where the value $s^*(t)$ is not unique. Thus, neither $\Dt s^*(t)$ nor $\Dt v(t)$ are  properly defined and the multiplication of equation \eqref{2.eqmod} by $v(t)$ should be justified. This is done in the following lemma.

 \begin{lemma}\label{Lem2.just-add} Let $u(t)\in H_0$, $t\in[0,T]$, be smooth. Then the function $t\to F(u(t))$ is absolutely continuous in time and the following formula is valid:
 \begin{equation}\label{add-Fdir}
 \frac d{dt}F(u(t))=\frac d{dt}\|v\|^2_H=2(\Dt u(t),v(t)), \text{ for almost all $t\in[0,T]$,}
 \end{equation}
 where $v(t)=u(t)-\phi_{s^*(t)}$ and $s^*(t)$ is a minimizer of \eqref{add-min}.
 \end{lemma}
\begin{proof} We first note that, due to the triangle inequality, the distance function
 $$
 u\to \operatorname{dist}(u,\{\phi_s,\, s\in[-\pi,\pi]\})
 $$
is Lipschitz continuous with Lipschitz constant one. For this reason, for any $t_1,t_2\in[0,T]$, we have
\begin{equation}\label{add-5}
|\,\|v(t_1)\|_H-\|v(t_2)\|_H|\le\|u(t_1)-u(t_2)\|_{H},
\end{equation}
so the function $t\to\|v(t)\|^2_H$ is Lipschitz continuous despite the fact that $v(t)$ may have jumps. Moreover, if the function $u$ is regular enough, we have
\begin{equation}\label{add-6}
\|u(t_1)-u(t_2)\|_H\le C|t_1-t_2|.
\end{equation}
Inequalities \eqref{add-5} and \eqref{add-6} show that the function $t\to\|v(t)\|^2_H$ is absolutely continuous and, therefore, is differentiable for almost all $t\in[0,T]$. Thus, we only need to find the derivative. To this end, we take $0<t_2<t_1<T$ such that $\|v(t)\|^2_H$ is differentiable at $t_1$ and $t_2$, and we introduce $w=u(t_1)-u(t_2)$. Then using the fact that $F(u(t_i))$ is a minimum, we have
$$
\|u(t_1)-\phi_{s_1}\|_H^2\le \|u(t_1)-\phi_{s_2}\|_H^2=\|u(t_2)-\phi_{s_2}+w\|_H^2=\|u(t_2)-\phi_{s_2}\|_H^2+2(u(t_2)-\phi_{s_2},w)+\|w\|^2_H
$$
and, analogously,
$$
\|u(t_2)-\phi_{s_2}\|^2_H\le \|u(t_1)-\phi_{s_1}\|_H^2-2(u(t_1)-\phi_{s_1},w)+\|w\|_H^2,
$$
where $s_i=s^*(t_i)$. From these two inequalities we conclude that
\begin{multline}\label{add-7}
2\(v(t_1),\frac{u(t_1)-u(t_2)}{t_1-t_2}\)-\frac{\|u(t_1)-u(t_2)\|_H^2}{t_1-t_2}\le \frac{\|v(t_1)\|_H^2-\|v(t_2)\|_H^2}{t_1-t_2}\le\\\le 2\(v(t_2),\frac{u(t_1)-u(t_2)}{t_1-t_2}\)+\frac{\|u(t_1)-u(t_2)\|_H^2}{t_1-t_2}.
\end{multline}
Passing now to the limit $t_1\to t_2$ in the right-hand side of \eqref{add-7}, we see that, for almost all $t_2\in[0,T]$,
$$
\frac d{dt}\|v(t)\|_H^2\big|_{t=t_2}\le 2(\Dt u(t_2),v(t_2))
$$
(recall that the existence of the derivative has already been proved). Analogously, passing to the limit $t_2\to t_1$ in the left-hand side of inequality
\eqref{add-7}, we get
$$
\frac d{dt}\|v(t)\|_H^2\big|_{t=t_1}\ge 2(\Dt u(t_1),v(t_1))
$$
for almost all $t_1\in[0,T]$. The last two inequalities imply \eqref{add-Fdir} and finish the proof of the lemma.
\end{proof}
Now, it is not difficult to finish the proof of the theorem. Indeed, the already proved lemma justifies the derivation of the key inequality \eqref{add-35} for the case where the solution $u(t)$ is smooth (which will be the case if we start from smooth initial data). The validity of the desired estimate \eqref{2.dis3} for any $u(0)\in H_0$ follows then by the usual approximation arguments.
Thus, Theorem \ref{Th2.attr3} is proved.
\end{proof}
\begin{corollary}\label{Cor2.attr3} Let the assumptions of Theorem \eqref{Th2.attr3} hold. Then, the Kuramoto-Sivashinsky equation \eqref{0.eq3}
possesses a global attractor $\Cal A_{KS}(L)$, in the phase space
$H_0$. Moreover, this global attractor is smooth and
\begin{equation}\label{2.KSatt}
\|\Cal A_{KS}(L)\|_{H^s}\le C_s(L),
\end{equation}
for some positive constant $C$ dependent on $s$, $a$ and $L$. In
particular, $C_0(L)=C L$ with some $C$ independent of $L$, but dependent on the parameter $a$.
\end{corollary}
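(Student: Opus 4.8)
The plan is to derive the corollary from Theorem~\ref{Th2.attr3} together with the abstract global attractor existence theorem already used in the proof of Corollary~\ref{Cor2.attr}: writing $S^L_{KS}(t)$ for the solution semigroup of \eqref{0.eq3} acting on $H_0$, it suffices to check that (i) for every fixed $t\ge0$ the map $S^L_{KS}(t):H_0\to H_0$ is continuous in the initial data, and (ii) $S^L_{KS}(t)$ possesses a compact absorbing set in $H_0$.

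For (i) one argues exactly as in the derivation of \eqref{add-lip}. If $u_1,u_2$ solve \eqref{0.eq3} and $v:=u_1-u_2$, then $v$ satisfies a KS-type equation with quadratic term $u_1\px v+v\px u_2$; taking the inner product in $H$ with $v$, the dispersion term $L\px^3v$ drops out (since $(\px^3v,v)_H=0$ by periodicity), the term $-a\Dx v$ is subordinate to the $\|\Dx v\|^2_H$-dissipation produced by $-\px^4v$, and the quadratic term is controlled by $1$D interpolation inequalities together with the space-time $L^6$-bound of the $u_i$ following from \eqref{2.dis3} and parabolic smoothing. This gives $\|v(t)\|_H\le e^{K(L)t}\|v(0)\|_H$ (with $K$ now allowed to depend on $L$), which in particular re-proves uniqueness.

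For (ii), estimate \eqref{2.dis3} shows that the ball $\mathcal B_L:=\{u\in H_0:\ \|u\|^2_H\le 2C(L^2+1)\}$ is a bounded absorbing set. To upgrade it to a compact one I would prove, for every $s\ge0$, a smoothing estimate $\|S^L_{KS}(t)u_0\|_{H^s}\le(1+t^{-N_s})Q_{s,L}(\|u_0\|_H)$ for $t\in(0,1]$ by the standard bootstrap on \eqref{0.eq3}: multiplying by $(-1)^k\px^{2k}u$ and integrating by parts, the leading term $-\px^4u$ provides the dissipation $-\|u\|^2_{H^{k+2}}$, the dispersion term again disappears from the integer-order energy identity because $\int(\px^3u)(\px^{2k}u)\,dx=0$ by periodicity, and $-a\Dx u$ and $u\px u$ are of strictly lower order and get absorbed by Young's inequality into the $H^{k+2}$-dissipation using the already-obtained lower-order bounds. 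Since all of this is classical for the Kuramoto--Sivashinsky equation and the skew-symmetric dispersive term contributes nothing new, I would largely refer to \cites{cees,g94,NST,ot,tem}; here, in contrast with the Ginzburg--Landau case, no uniformity in $L$ is claimed. Consequently $\mathcal B_L':=S^L_{KS}(1)\mathcal B_L$ is bounded in every $H^s$, hence precompact in $H_0$, and remains absorbing.

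Applying the abstract theorem now produces the global attractor $\mathcal A_{KS}(L)\subset\mathcal B_L'$; the boundedness of $\mathcal B_L'$ in each $H^s$ yields both the smoothness of $\mathcal A_{KS}(L)$ and estimate \eqref{2.KSatt} with a constant $C_s(L)$ depending on $s$, $a$ and $L$. For $s=0$ one obtains the sharper bound: since $\mathcal A_{KS}(L)$ is strictly invariant and bounded, every point of it lies on a complete bounded trajectory, so applying \eqref{2.dis3} along such a trajectory and letting the initial time tend to $-\infty$ kills the term $\|u(0)\|^2_He^{-t}$ and gives $\|u\|^2_H\le C(L^2+1)$ on $\mathcal A_{KS}(L)$, i.e. $\|\mathcal A_{KS}(L)\|_H\le CL$ for $L$ large after relabelling $C$. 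The only point requiring a little care is confirming that the skew-symmetric dispersion term $L\px^3u$ does not obstruct the parabolic bootstrap producing the compact absorbing set; everything else is routine, the price being merely that the constants $C_s(L)$ for $s>0$ are allowed to depend on $L$.
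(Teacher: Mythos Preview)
Your proposal is correct and follows essentially the same approach as the paper: both verify continuity via a Lipschitz estimate analogous to \eqref{add-lip}, obtain a compact absorbing set by applying a parabolic smoothing estimate of the form $\|u(t+1)\|_{H^s}\le C(1+\|u(t)\|_H^{N_s})$ to the $H_0$-absorbing ball coming from \eqref{2.dis3}, and then invoke the abstract attractor existence theorem exactly as in Corollary~\ref{Cor2.attr}. You supply somewhat more detail (in particular, making explicit that the skew-symmetric term $L\px^3u$ drops out of the energy identities and spelling out how the $s=0$ bound $\|\Cal A_{KS}(L)\|_H\le CL$ follows by sending the initial time to $-\infty$ on complete bounded trajectories), but the argument is the same.
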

\begin{proof}
Indeed, analogously to \eqref{2.sm}, we have
the smoothing property on a finite interval  for the parabolic  equation  \eqref{0.eq3} (see,
e.g., \cite{he}):
\begin{equation}\label{2.estsmooth}
\|u(t+1)\|_{H^s}\le C(1+\|u(t)\|_H^{N_s})
\end{equation}
for some positive $C$ and $N_s$. Moreover, the analogue of the Lipschitz estimate \eqref{add-lip} also holds for this equation (and can be proved analogously to the proof given in Theorem \ref{Th2.attr1}). Thus, the proof of the corollary repeats word by word the proof of Corollary \ref{Cor2.attr} and, for this reason, is omitted.
\end{proof}
\begin{remark} As before, one can show that the global attractor $\Cal
A_{KS}$ is analytic in the sense of Gevrey class. However, in
contrast to the cases of Ginzburg-Landau equations, its Gevrey
norm is not uniformly bounded as  $L\to\infty$. As we will see in the next section, the norms of the global attractors $\mathcal A_{KS}(L)$ indeed grow, as $L\to\infty$.
\end{remark}

\section{The limit $L\to\infty$}\label{s3}

The main aim of this section is to study the dependence of the
above constructed global attractors on $L$, as  $L\to\infty$. We start with
the case of complex Ginzburg-Landau equations \eqref{0.eq2} and
\eqref{0.eq1}. To this end, we change the dependent variable $u$
as follows
\begin{equation}\label{3.change}
\begin{cases}
u(t)=\Cal H_L(t)w(t)\ \ \text{for the case of equation
\eqref{0.eq1}},\\
u(t)=\Cal F_L(t)w(t)\ \ \text{for the case of equation
\eqref{0.eq2}},
\end{cases}
\end{equation}
where the isometries $\Cal H_L$ and $\Cal F_L$ are defined by
\eqref{1.exp}. Then, introducing a small parameter $\eb:=\frac1L$,
we rewrite equations \eqref{0.eq1} and \eqref{0.eq2} in the form
\begin{equation}\label{3.eq1}
\Dt w=(1+i\gamma)\px^2w+\beta
 w-(1+i\omega)F(t/\eb,w),
\end{equation}
and
\begin{equation} \label{3.eq2}
 \Dt w=\px^2w+\beta
 w-(1+i\omega)G(t/\eb,w)
\end{equation}
respectively. The functions $F$ and $G$ are defined by \eqref{1.non1}
and \eqref{1.non2}.
\par
Equations \eqref{3.eq1} and \eqref{3.eq2} contain rapidly
oscillating in time terms $F(t/\eb,w)$ and $G(t/\eb,w)$ and we can
(formally) write the limit averaged equations:
\begin{equation}\label{3.aveq1}
\Dt \widehat w=(1+i\gamma)\px^2\widehat w+\beta
 \widehat w-(1+i\omega)N(\widehat w),
\end{equation}
and
\begin{equation}\label{3.aveq2}
 \Dt \widehat w=\px^2\widehat w+\beta
 \widehat w-(1+i\omega)M(\widehat w),
\end{equation}
where the nonlocal operators $N$ and $M$ are introduced  in Lemma
\ref{Lem1.av1} and Lemma \ref{Lem1.av2}. Our first assertion gives the
dissipative estimate for the solutions of the limit averaged
equations.
\begin{proposition}\label{Prop3.dis} Problems
 \eqref{3.aveq1} and \eqref{3.aveq2} are globally well-posed for any $\hat w(0)\in H^s$, $s\ge0$, and the corresponding solutions
 $\hat w(t)$ satisfy the analogues of
estimates \eqref{2.dis1}, \eqref{2.sm} and \eqref{add-lip}.
\end{proposition}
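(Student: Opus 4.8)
The plan is to treat \eqref{3.aveq1} and \eqref{3.aveq2} as standard semilinear parabolic equations and to reproduce, almost verbatim, the scheme used in the proof of Theorem \ref{Th2.attr1}; the only genuinely new ingredient is the nonlocal cubic nonlinearity, $N$ for \eqref{3.aveq1} and $M$ for \eqref{3.aveq2}. The first thing I would record is that, although $N$ and $M$ are nonlocal, they are in fact \emph{better behaved} than the original pointwise nonlinearity $u|u|^2$: reading off the explicit formulas \eqref{1.avform} and \eqref{1.avform1}, every summand of $N(w)$ or $M(w)$ is either a scalar of size $\le\|w\|_H^2$ times $w$ or $\bar w$, or a diagonal Fourier multiplier whose symbol is bounded by $\|w\|_{\ell^\infty}^2\le\|w\|_H^2$. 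This immediately gives
\begin{equation*}
\|N(w)\|_{H^s}+\|M(w)\|_{H^s}\le C_s\|w\|_H^2\,\|w\|_{H^s},\qquad s\ge0,
\end{equation*}
and, by trilinearity, the Lipschitz bounds $\|N(w_1)-N(w_2)\|_H\le C(\|w_1\|_H^2+\|w_2\|_H^2)\|w_1-w_2\|_H$, and likewise for $M$. Since the linear parts $(1+i\gamma)\px^2+\beta$ and $\px^2+\beta$ have leading coefficient with positive real part, they generate analytic semigroups on every $H^s$, so these bounds already yield local well-posedness of \eqref{3.aveq1} and \eqref{3.aveq2} in each $H^s$, $s\ge0$ (and even in $H$ itself), by the usual variation-of-constants contraction argument; I would simply cite \cite{he} for this part.

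Next I would derive the dissipative estimate, which is where the real content of the proposition lies. Taking the $H$-inner product of \eqref{3.aveq1} (resp. \eqref{3.aveq2}) with $\widehat w$, the crucial point is that, by Lemma \ref{Lem1.dis} (resp. Lemma \ref{Lem1.dis1}), the quantity $(N(\widehat w),\widehat w)_H$ (resp. $(M(\widehat w),\widehat w)_H$) is \emph{real} and bounded below by $\|\widehat w\|_H^4$; consequently the imaginary parameters $\gamma$ and $\omega$ disappear from the real part of the energy identity, leaving
\begin{equation*}
\tfrac12\tfrac d{dt}\|\widehat w\|_H^2+\|\px\widehat w\|_H^2+\|\widehat w\|_H^4\le\Ree\beta\,\|\widehat w\|_H^2,
\end{equation*}
which is the exact analogue of \eqref{2.dif0}, with $\|\widehat w\|_H^4$ playing the role of $\tfrac1{2\pi}\|u\|^4_{L^4}$. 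Absorbing $\Ree\beta\,\|\widehat w\|_H^2$ into the quartic term and invoking Gronwall's inequality yields \eqref{2.dis1} for $s=0$; the case $s>0$ then follows by the identical bootstrap, now using $\|N(w)\|_{H^s}\le C_s\|w\|_H^2\|w\|_{H^s}$ to control the nonlinearity once $\|\widehat w\|_H$ has been bounded. Global existence is an immediate consequence of the local theory together with this a priori bound.

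It remains to establish \eqref{2.sm} and \eqref{add-lip}, both of which copy the corresponding arguments in Theorem \ref{Th2.attr1}. For the smoothing estimate I would multiply the equation by $-t\,\overline{\px^2\widehat w}$ and integrate in time as in the derivation of \eqref{2.dism}; alternatively it follows directly from analyticity of the linear semigroup combined with the nonlinear bound above. For the Lipschitz estimate I would write the equation for $v=\widehat w_1-\widehat w_2$, take its $H$-inner product with $v$, and use $\|N(\widehat w_1)-N(\widehat w_2)\|_H\le C(\|\widehat w_1\|_H^2+\|\widehat w_2\|_H^2)\|v\|_H$ together with the bound on $\|\widehat w_i(t)\|_H$ already obtained to get $\tfrac d{dt}\|v\|_H^2\le C(1+\|\widehat w_1\|_H^2+\|\widehat w_2\|_H^2)\|v\|_H^2$, whence \eqref{add-lip} follows by Gronwall; note that here, unlike in the proof of Theorem \ref{Th2.attr1}, no space-time $L^6$ control is needed, because the averaged nonlinearities are Lipschitz from $H$ to $H$ on bounded sets. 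Thus the only non-routine step is the dissipative estimate, and that step is supplied entirely by the reality and coercivity assertions of Lemmas \ref{Lem1.dis} and \ref{Lem1.dis1} — which is precisely why the argument goes through for arbitrary values of the parameters $\gamma$ and $\omega$.
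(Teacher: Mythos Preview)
Your proposal is correct and follows exactly the route the paper indicates: invoke the dissipativity inequalities of Lemmas \ref{Lem1.dis} and \ref{Lem1.dis1} and repeat the scheme of Theorem \ref{Th2.attr1}. Your additional observation that $N$ and $M$ satisfy $\|N(w)\|_{H^s}\le C\|w\|_H^2\|w\|_{H^s}$ and are Lipschitz from $H$ to $H$ on bounded sets is a genuine simplification over a literal word-by-word transcription (it bypasses the $L^6$ embedding used in \eqref{add-lip}), but it is a refinement within the same approach rather than a different one.
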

Indeed, due to the dissipativity conditions \eqref{1.dis} and
\eqref{1.dis1}, the proof of these assertions repeats word by word
the proof of Theorem \ref{Th2.attr1} and for this reason it is omitted.
\par
Since the averaged problems \eqref{3.aveq1} and \eqref{3.aveq2} are globally well-posed, the corresponding solution semigroups $\hat S_{GL1}(t):H\mapsto H$ and $\hat S_{GL2}(t):H\mapsto H$ are well defined:
$$
\hat S_{GLi}(t)\hat w(0):=\hat w(t),\ \ i=1,2,
$$
where $\hat w(t)$ solves problem \eqref{3.aveq1} if $i=1$, and problem \eqref{3.aveq2} if $i=2$.

\begin{proposition}\label{Prop3.avattr} The solution semigroups associated with averaged equations \eqref{3.aveq1} and \eqref{3.aveq2} possess global attractors $\hat {\mathcal A}_{GL1}$ and $\hat{\mathcal A}_{GL2}$, respectively, which are bounded in $H^s$, for any $s$. Moreover, these semigroups are invariant with respect to $\Cal H_1(s)$ and $\Cal F_1(s)$, respectively:
\begin{equation}\label{add-sem1}
\Cal H_1(s)\circ\hat S_{GL1}(t)=\hat S_{GL1}(t)\circ\Cal H_1(s),\ \ \Cal F_1(s)\circ\hat S_{GL2}(t)=\hat S_{GL2}(t)\circ\Cal F_1(s),
\end{equation}
for all $s\in\R$ and $t\ge0$, therefore,
\begin{equation}\label{3.sym}
\Cal H_L(s)\widehat{\Cal A}_{GL1}=\widehat{\Cal A}_{GL1},\ \
\Cal F_L(s)\widehat{\Cal A}_{GL2}=\widehat{\Cal A}_{GL2},
\end{equation}
for all $L>0$ and $s\in\R$.
\end{proposition}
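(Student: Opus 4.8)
The existence of the global attractors $\widehat{\mathcal A}_{GL1}$ and $\widehat{\mathcal A}_{GL2}$ is immediate: by Proposition~\ref{Prop3.dis} the averaged equations \eqref{3.aveq1} and \eqref{3.aveq2} enjoy exactly the same dissipative estimate \eqref{2.dis1}, smoothing property \eqref{2.sm} and Lipschitz estimate \eqref{add-lip} as the Ginzburg-Landau equations, so one repeats verbatim the argument of Corollary~\ref{Cor2.attr}: the ball $\{\|w\|_H^2\le 2C_*\}$ is an absorbing set, its image under $\hat S_{GLi}(1)$ is a compact absorbing set (bounded in all $H^s$, $s>0$, by \eqref{2.sm}), and the abstract attractor existence theorem of \cites{bv1,tem} applies. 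The uniform bound of the attractors in every $H^s$ follows, as before, from the fact that the attractor is contained in the compact absorbing set, which is bounded in $H^s$.

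**The symmetry relations.** The core of the proposition is \eqref{add-sem1}. I would prove it by showing that the operators $\Cal H_1(s)$ commute with the nonlinearity $N$, i.e. $\Cal H_1(s)N(w)=N(\Cal H_1(s)w)$ for every $s$, and similarly $\Cal F_1(s)M(w)=M(\Cal F_1(s)w)$; together with the obvious commutation of $\Cal H_1(s)$ (resp. $\Cal F_1(s)$) with the linear part $(1+i\gamma)\px^2+\beta$ (resp. $\px^2+\beta$) — since all these are Fourier multipliers and hence commute with each other — this gives that if $\hat w(t)$ solves \eqref{3.aveq1} then so does $\Cal H_1(s)\hat w(t)$ with initial data $\Cal H_1(s)\hat w(0)$, which is precisely \eqref{add-sem1} by uniqueness. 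To check $\Cal H_1(s)N(w)=N(\Cal H_1(s)w)$ one uses the averaging definition: $N(w)=\langle F(\cdot,w)\rangle=\frac1{2\pi}\int_0^{2\pi}\Cal H_1(-\tau)(\Cal H_1(\tau)w\cdot|\Cal H_1(\tau)w|^2)\,d\tau$. Since $\Cal H_1$ is a group, $\Cal H_1(s)\Cal H_1(-\tau)=\Cal H_1(-(\tau-s))$ and $\Cal H_1(\tau)w=\Cal H_1(\tau-s)(\Cal H_1(s)w)$; note however that the cubic term is not $\Cal H_1$-equivariant pointwise — $|\Cal H_1(\tau)w|^2$ involves complex conjugation — so the naive substitution needs the explicit formula \eqref{1.avform}. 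It is cleanest to verify the commutation directly on \eqref{1.avform} (or \eqref{add1}): since $\Cal H_1(s)e_n=e^{-in^3s}e_n$, one checks that each of the monomials $w\|w\|_H^2$, $\bar w[w,w]$, $w_0|w_0|^2 e_0$ and $w_n(|w_n|^2+2|w_{-n}|^2)e_n$ picks up exactly the phase $e^{-in^3 s}$ in its $n$-th Fourier coefficient after replacing $w$ by $\Cal H_1(s)w$ — using that $\|w\|_H^2$ is invariant and that $[w,w]=\sum_n w_n w_{-n}$ transforms, in its $e_n$-coefficient (coming from $\bar w_{-n}[w,w]$), with total phase $e^{i n^3 s}\cdot e^{-i(n^3+(-n)^3+\dots)s}$; the resonance identity $(n+m)(m+k)(n+k)=0$ underlying the averaging is exactly what makes these phases collapse to $e^{-in^3 s}$. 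The same computation, now trivial, works for $M(w)=\sum_n w_n(2\|w\|_H^2-|w_n|^2)e_n$ and $\Cal F_1(s)e_n=e^{-in^2 s}e_n$.

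**From \eqref{add-sem1} to \eqref{3.sym}.** Once the semigroup commutes with $\Cal H_1(s)$, the attractor $\widehat{\mathcal A}_{GL1}$, being the unique maximal bounded invariant set, is mapped to itself by $\Cal H_1(s)$: indeed $\Cal H_1(s)\widehat{\mathcal A}_{GL1}$ is compact, strictly invariant under $\hat S_{GL1}(t)$ by \eqref{add-sem1}, and attracting (since $\Cal H_1(s)$ is an isometry, $\operatorname{dist}_H(\hat S_{GL1}(t)B,\Cal H_1(s)\widehat{\mathcal A}_{GL1})=\operatorname{dist}_H(\Cal H_1(s)\hat S_{GL1}(t)\Cal H_1(-s)B,\dots)\to 0$), hence by uniqueness $\Cal H_1(s)\widehat{\mathcal A}_{GL1}=\widehat{\mathcal A}_{GL1}$. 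Finally, since $\Cal H_L(s)=\Cal H_1(Ls)$ by \eqref{1.exp}, the relation $\Cal H_L(s)\widehat{\mathcal A}_{GL1}=\widehat{\mathcal A}_{GL1}$ for all $L>0,s\in\R$ is the same statement; likewise for $\Cal F_L$ and $\widehat{\mathcal A}_{GL2}$.

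**Main obstacle.** The only genuinely non-routine point is verifying the commutation $\Cal H_1(s)\circ N=N\circ\Cal H_1(s)$: one must be careful that the phases introduced by $\Cal H_1(s)$ in the three resonant families (and their pairwise and triple intersections in \eqref{1.avform}) all combine to the single factor $e^{-in^3 s}$ expected of an operator that commutes with $\Cal H_1(s)$. This is precisely guaranteed by the resonance condition $n^3+m^3+k^3=(n+m+k)^3$: on the resonant set the exponent $n^3+m^3+k^3$ equals $(n+m+k)^3$, so the phase factor in each term of $N(\Cal H_1(s)w)$ reduces to $e^{-i(n+m+k)^3 s}$, i.e. exactly the phase $\Cal H_1(s)$ puts on the mode $e_{n+m+k}$. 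Because $N$ is defined as the time average over the resonant terms, this identity is automatic once stated correctly, but it is worth writing out at least for one representative family to make the bookkeeping transparent.
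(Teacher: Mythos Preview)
Your proof is correct and follows essentially the same approach as the paper: both establish the attractor existence by citing Corollary~\ref{Cor2.attr}, reduce \eqref{add-sem1} to the commutation $\Cal H_1(s)N(w)=N(\Cal H_1(s)w)$ (resp.\ $\Cal F_1(s)M(w)=M(\Cal F_1(s)w)$), and deduce \eqref{3.sym} from \eqref{add-sem1}. Your treatment is more detailed than the paper's (which simply says the commutation ``can be easily verified using the explicit formulas \eqref{1.exp}, \eqref{add1} and \eqref{1.avform1}''), and your observation that the resonance identity $n^3+m^3+k^3=(n+m+k)^3$ is precisely what makes the phases in $N(\Cal H_1(s)w)$ collapse to $e^{-i(n+m+k)^3 s}$ is a cleaner conceptual explanation of \emph{why} the commutation holds, but the underlying argument is the same.
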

\begin{proof} Indeed, the existence of global attractors can be verified exactly as in Corollary \ref{Cor2.attr}, and \eqref{3.sym} is an immediate corollary of the invariance \eqref{add-sem1}. Thus, we only need to check \eqref{add-sem1}. In turn, in order to check \eqref{add-sem1}, it is enough to verify the invariance of the nonlinearities $N$ and $M$:
$$
\Cal H_1(s)N(w)=N(\Cal H_1(s)w),\ \ \Cal F_1(s)M(w)=M(\Cal F_1(s)w), \ s\in\R,\ \ w\in H.
$$
Finally, the invariance of $N$ and $M$ can be easily verified using the explicit formulas \eqref{1.exp}, \eqref{add1} and \eqref{1.avform1}. Thus, the proposition is proved.
\end{proof}
Our next observation shows that the global attractors  $\widehat{\Cal
A}_{GL1}$ and $\widehat{\Cal A}_{GL2}$ of the limit equations belong to the finite-dimen\-si\-onal
invariant hyperplane of the phase space and, thus, can be obtained
by solving a system of ODEs.
\begin{proposition}\label{Prop3.inv} Let $H_D$, where $D\in\Bbb N$ be the
$2D+1$-dimensional hyperplane
\begin{equation}
H_D:=\bigg\{\sum_{n=-D}^Da_ne_n,\ \ a_n\in\Bbb C\bigg\}.
\end{equation}
Then the hyperplanes $H_D$ are invariant with respect to equations \eqref{3.aveq1} and
\eqref{3.aveq2}, for all $D$. Moreover, their global attractors belong to $H_D$ provided $D=D(\beta,\omega)$ is large enough:
\begin{equation}\label{3.finattr}
\widehat{\Cal A}_{GL1}\subset H_D, \ \ \widehat{\Cal A}_{GL2}\subset H_D.
\end{equation}
In particular, we may take $D=[\sqrt{\Ree\beta}]+1$ for the case of equation \eqref{3.aveq2}, here $[z]$ denotes the integer part of the number $z$ (the global attractor of \eqref{3.aveq2} is trivial if $\Ree\beta<0$).
\end{proposition}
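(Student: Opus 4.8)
The plan is to work in Fourier modes, where the averaged nonlinearities $N$ and $M$ act almost diagonally, and to combine this structure with the dissipative estimates of Proposition \ref{Prop3.dis} and with the description of the attractor as the union of complete bounded trajectories. For the invariance, using $\px^2e_n=-n^2e_n$ and \eqref{1.avform1}, equation \eqref{3.aveq2} reads, mode by mode,
$$
\dot w_n=w_n\Big[-n^2+\beta-(1+i\omega)\big(2\|\widehat w\|_H^2-|w_n|^2\big)\Big],\qquad n\in\Bbb Z,
$$
and, since the right-hand side carries the factor $w_n$, the condition $w_n=0$ is preserved for every fixed $n$; in particular $H_D$ is invariant. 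For \eqref{3.aveq1}, formula \eqref{add1} gives, for $n\ne0$,
$$
\dot w_n=-(1+i\gamma)n^2w_n+\beta w_n-(1+i\omega)\Big(w_n\big(2\|\widehat w\|_H^2-|w_n|^2-2|w_{-n}|^2\big)+\bar w_{-n}[\widehat w,\widehat w]\Big),
$$
so $\dot w_n$ involves only $w_n$, $w_{-n}$ and the scalars $\|\widehat w\|_H^2$, $[\widehat w,\widehat w]$; if $|n|>D$ then also $|-n|>D$, hence $w_n=w_{-n}=0$ forces $\dot w_n=0$, and $H_D$ is again invariant. By the global well-posedness and uniqueness of Proposition \ref{Prop3.dis}, a solution starting in $H_D$ coincides with the solution of the finite system of ODEs obtained by restriction, and therefore stays in $H_D$.

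\emph{The attractor of \eqref{3.aveq2}.} By strict invariance, every point of $\widehat{\Cal A}_{GL2}$ lies on a complete trajectory $\widehat w(t)$, $t\in\R$, bounded in $H$. Since $\Ree\big((1+i\omega)r\big)=r$ for real $r$, the equation for $w_n$ yields, for $n\ne0$,
$$
\frac d{dt}|w_n|^2=2\Big(-n^2+\Ree\beta-2\|\widehat w\|_H^2+|w_n|^2\Big)|w_n|^2\le 2\big(-n^2+\Ree\beta\big)|w_n|^2,
$$
because $|w_n|^2\le\|\widehat w\|_H^2$. If $n^2>\Ree\beta$, integrating over $[s,t]$ and letting $s\to-\infty$ (using that $\widehat w(s)$ stays bounded) gives $w_n(t)\equiv0$. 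Taking $D=[\sqrt{\Ree\beta}]+1$ we get $n^2\ge(D+1)^2>\Ree\beta$ for all $|n|>D$, whence $\widehat{\Cal A}_{GL2}\subset H_D$. If $\Ree\beta<0$, the same argument kills every mode $n\ne0$, and then the equation for $w_0$ becomes $\frac d{dt}|w_0|^2\le2\Ree\beta|w_0|^2$, so $w_0\equiv0$ and $\widehat{\Cal A}_{GL2}=\{0\}$.

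\emph{The attractor of \eqref{3.aveq1}.} Now the term $\bar w_{-n}[\widehat w,\widehat w]$ couples $n$ and $-n$, so one works with the pair $(w_n,w_{-n})$. Using again $\Ree\big((1+i\omega)r\big)=r$ for real $r$, the inequality $|w_nw_{-n}|\le\frac12(|w_n|^2+|w_{-n}|^2)$, the bound $|[\widehat w,\widehat w]|\le\|\widehat w\|_H^2$, and $|w_n|^2+|w_{-n}|^2\le\|\widehat w\|_H^2$ to absorb the cubic terms, a direct computation gives, for $n\ne0$,
$$
\frac d{dt}\big(|w_n|^2+|w_{-n}|^2\big)\le 2\Big(-n^2+\Ree\beta+C(\omega)\|\widehat w\|_H^2\Big)\big(|w_n|^2+|w_{-n}|^2\big),
$$
with $C(\omega)$ depending only on $\omega$. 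Along a complete bounded trajectory in $\widehat{\Cal A}_{GL1}$ one has $\|\widehat w(t)\|_H^2\le R_0^2$ for all $t$, with $R_0$ depending only on $\Ree\beta$: this follows from the $H$-version of the dissipative estimate in Proposition \ref{Prop3.dis}, whose proof rests on the inequality $[N(\widehat w),\overline{\widehat w}]\ge\|\widehat w\|_H^4$ of Lemma \ref{Lem1.dis}. Hence, as soon as $n^2>\Ree\beta+C(\omega)R_0^2$, the coefficient above is strictly negative, and the same backward-in-time argument yields $w_n(t)=w_{-n}(t)=0$; this gives $\widehat{\Cal A}_{GL1}\subset H_D$ for $D=D(\beta,\omega)$ large enough.

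The main obstacle is exactly this last estimate. For equation \eqref{3.aveq2} the averaged nonlinearity is diagonal, which produces the sharp threshold $D=[\sqrt{\Ree\beta}]+1$ directly from the sign of a scalar; for \eqref{3.aveq1} neither the passage through the $\pm n$ pair nor the a priori $L^2$-bound on the attractor can be dispensed with, which is also why $D$ must be allowed to depend on $\omega$ and not only on $\Ree\beta$.
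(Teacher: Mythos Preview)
Your proof is correct and follows essentially the same route as the paper: invariance of $H_D$ is read off from the explicit Fourier form of $M$ and $N$; for \eqref{3.aveq2} you obtain the diagonal inequality $\frac d{dt}|w_n|^2\le 2(-n^2+\Ree\beta)|w_n|^2$ exactly as the authors do; and for \eqref{3.aveq1} you pair the modes $\pm n$ and bound the $[N(w)]_{\pm n}$ contribution by $C(\omega)\|w\|_H^2(|w_n|^2+|w_{-n}|^2)$, which is precisely the paper's estimate (3.add11) with $C(\omega)=C\sqrt{1+\omega^2}$, and then conclude via boundedness of $\|w\|_H$ on the attractor. The backward-in-time integration along complete bounded trajectories is just the Gronwall argument the paper invokes, rephrased.
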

\begin{proof} Indeed, the invariance of hyperplanes $H_D$ follows immediately from the explicit
structure of nonlinearities $N(w)$ and $M(w)$ given by \eqref{add1} and \eqref{1.avform1}. So we only need to prove the embeddings \eqref{3.finattr}
if $D$ is large enough. Let us start with the case of the equation \eqref{3.aveq1}.
\par
Let us consider the equations for $w_k$ and
$w_{-k}$ from \eqref{3.aveq1}. Then,
multiplying them by $\bar w_k$ and $\bar w_{-k}$ respectively, and taking a sum and the real part, after the standard calculations, we get
\begin{multline}\label{3.add1}
\frac12\frac d{dt}(|w_k|^2+|w_{-k}|^2)+(k^2-\Ree\beta)(|w_k|^2+|w_{-k}|^2)+\\+\Ree((1+i\omega)([N(w)]_k\cdot
\bar w_k+[N(w)]_{-k}\cdot \bar w_{-k}))=0,
\end{multline}
where we denote by $[N(w)]_k$ the $k$th coordinate of $N(w)$ in the basis $\{e_n\}_{n\in\Bbb Z}$. Using the explicit formula \eqref{add1} for the coordinates of $N(w)$, we see that
$$
|\Ree((1+i\omega)([N(w)]_k\cdot
\bar w_k+[N(w)]_{-k}\cdot \bar w_{-k}))|\le C\sqrt{1+\omega^2}\|w\|^2_H(|w_k|^2+|w_{-k}|^2),
$$
where the constant $C$ is independent of $k$ and $\omega$. Thus, \eqref{3.add1} reads
\begin{equation}\label{3.add11}
\frac12\frac d{dt}(|w_k|^2+|w_{-k}|^2)+(k^2-\Ree\beta-C\sqrt{1+\omega^2}\|w\|^2_H)(|w_k|^2+|w_{-k}|^2)\le 0.
\end{equation}
Since the $H$-norm of $w$ is bounded on the global attractor, for sufficiently large $k$, the second term in \eqref{3.add11} becomes positive and the Gronwall's inequality gives
$$
|w_k(t)|^2+|w_{-k}(t)|^2\le
e^{-\alpha t}(|w_k(0)|^2+|w_{-k}(0)|^2), \ \text{for some}\  \alpha>0.
$$
Thus, $w_k(t)=w_{-k}(t)=0$ on the global attractor, for $|k|$ large enough, and embedding \eqref{3.finattr} is verified for the case of equation \eqref{3.aveq1}.
\par
Let us now consider equation \eqref{3.aveq2}. The situation here is simpler since the expression
$$
[M(w)]_k\bar w_k=|w_k|^2(2\|w\|^2_H-|w_k|^2)
$$
is real and is non-negative and the analogue of \eqref{3.add11} reads
$$
\frac12\frac d{dt}|w_k|^2+(k^2-\Ree \beta)|w_k|^2\le0.
$$
Thus, indeed, $w_k(t)=0$ on the global attractor if $k^2>\Ree\beta$, and the proposition is proved.
\end{proof}


 The next result shows that
the distance between the appropriate averaged and non-averaged
trajectories is indeed small on the finite time interval.
\begin{theorem}\label{Th3.average} Let $w_0\in H^s$, for some
$s\ge1$, and let $w_\eb(t)$ and $\widehat w(t)$ be the non-averaged and
averaged solutions of \eqref{3.eq1} and \eqref{3.aveq1}, respectively, (or of
\eqref{3.eq2} and \eqref{3.aveq2}, respectively). Assume also that
$$
w_\eb(0)=\hat w(0)\in H^s.
$$
Then, the following estimate
holds:
\begin{equation}\label{3.dist}
\|w_\eb(t)-\widehat w(t)\|_{H^s}\le C_s\eb e^{Kt},
\end{equation}
where the constants $C_s$ and $K$ depend on $s$ and on the $H^s$ norm of $w(0)$, but are
independent of $\varepsilon$, as $\eb\to0$.
\end{theorem}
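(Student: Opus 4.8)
The statement is a standard averaging estimate on a finite time interval, so the plan is to compare $w_\eb$ and $\hat w$ directly via an energy estimate on the difference, after first isolating the oscillatory part of the nonlinearity as the time derivative of a bounded quantity. Write $z(t):=w_\eb(t)-\hat w(t)$; then $z$ satisfies the linear-plus-perturbation equation
\begin{equation}
\Dt z=(1+i\gamma)\px^2 z+\beta z-(1+i\omega)\bigl(F(t/\eb,w_\eb)-N(\hat w)\bigr),\qquad z(0)=0.
\end{equation}
The key algebraic step is to split $F(t/\eb,w_\eb)-N(\hat w)=\bigl(F(t/\eb,w_\eb)-F(t/\eb,\hat w)\bigr)+\bigl(F(t/\eb,\hat w)-N(\hat w)\bigr)$. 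The first bracket is Lipschitz in its argument, uniformly in $\tau$ and with Lipschitz constant controlled by $\|w_\eb\|_{H^s}^2+\|\hat w\|_{H^s}^2$ (this follows from estimate \eqref{1.alg} and the algebra property of $H^s$, $s>1/2$, exactly as in the proof of \eqref{add-lip}); it therefore contributes a term $\le C\|z\|_{H^s}$ to the energy estimate, both norms $\|w_\eb\|_{H^s}$, $\|\hat w\|_{H^s}$ being a priori bounded on $[0,T]$ uniformly in $\eb$ by \eqref{2.sm} and its averaged analogue in Proposition \ref{Prop3.dis}.

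**The oscillatory term.** The second bracket, $R(t):=F(t/\eb,\hat w(t))-N(\hat w(t))$, is the genuinely oscillatory piece, and it does not go to zero pointwise; it must be integrated by parts in time. Since $\tau\mapsto F(\tau,w)$ is $2\pi$-periodic with mean $N(w)$ (Lemma \ref{Lem1.av1}), the primitive
\begin{equation}
\Phi(\tau,w):=\int_0^{\tau}\bigl(F(\sigma,w)-N(w)\bigr)\,d\sigma
\end{equation}
is $2\pi$-periodic in $\tau$ and, again by \eqref{1.alg} and the algebra property, satisfies $\|\Phi(\tau,w)\|_{H^s}\le C_s\|w\|_{H^s}^3$ uniformly in $\tau$, together with the analogous cubic bound for $D_w\Phi$. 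Then $R(t)=\eb\,\frac{d}{dt}\bigl[\Phi(t/\eb,\hat w(t))\bigr]-\eb\,(D_w\Phi)(t/\eb,\hat w(t))\,\Dt\hat w(t)$, where $\Dt\hat w$ is bounded in $H^{s-2}$ — or, after using parabolic smoothing to gain two derivatives, one may run the whole argument one level down, in $H^{s-2}$ say, where $\Dt\hat w$ is bounded in $H^s$; to keep the statement in $H^s$ one simply absorbs the loss by working with $w_0\in H^{s+2}$ and then noting both solutions instantly regularize. Substituting this into the Duhamel/energy identity and integrating the total-derivative term by parts against the evolution, the oscillatory contribution to $\|z(t)\|_{H^s}$ is bounded by $C\eb(1+t)e^{Kt}$, the extra factor being harmless.

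**Closing the estimate.** Taking the $H^s$ inner product of the $z$-equation with $z$ (more precisely, applying $(-\px^2+1)^{s/2}$ and then pairing), the dissipative term $\|\px z\|_{H^s}^2$ has a favourable sign, the $\beta z$ term gives $\Ree\beta\,\|z\|_{H^s}^2$, the Lipschitz bracket gives $\le C(\|w_\eb\|_{H^s}^2+\|\hat w\|_{H^s}^2)\|z\|_{H^s}^2$, and the oscillatory bracket, handled as above, contributes $\le C\eb e^{Kt}\|z\|_{H^s}+\tfrac12\|\px z\|_{H^s}^2$ (the extra derivative produced by differentiating $\Phi(t/\eb,\hat w(t))$ in $x$ being absorbed into the dissipation). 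Collecting terms,
\begin{equation}
\frac{d}{dt}\|z(t)\|_{H^s}^2\le K\|z(t)\|_{H^s}^2+C_s^2\eb^2 e^{2Kt},\qquad z(0)=0,
\end{equation}
with $K$ and $C_s$ depending only on $s$ and on the uniform $H^s$ bounds of the two trajectories (hence only on $\|w_0\|_{H^s}$, by the uniform dissipativity). Gronwall's inequality then yields $\|z(t)\|_{H^s}\le C_s\eb\,e^{Kt}$, which is \eqref{3.dist}. The $G$/$M$ case is identical, using Lemma \ref{Lem1.av2} in place of Lemma \ref{Lem1.av1}. The main obstacle is purely bookkeeping: making the integration by parts in time rigorous requires that $\hat w$ be differentiable in time with values in a space where $\Phi$ and its derivative act boundedly, which forces the one-derivative (or two-derivative) juggling with parabolic smoothing described above; once one commits to proving the estimate in a slightly lower norm and then bootstrapping, everything else is a routine Gronwall argument.
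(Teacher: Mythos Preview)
Your overall strategy (split the nonlinearity into a Lipschitz piece and a mean-zero oscillatory piece, kill the latter with a primitive $\Phi$ and integrate by parts in time, then Gronwall) is a legitimate route and is genuinely different from the paper's. The paper does \emph{not} use the primitive $\Phi$; instead it introduces a \emph{parabolic} corrector $\theta$ solving
\[
\Dt\theta-(1+i\gamma)\Dx\theta+\theta=(1+i\omega)\bigl[F(t/\eb,w_\eb)-N(w_\eb)\bigr],\qquad \theta(0)=0,
\]
with the oscillation evaluated at $w_\eb$ rather than at $\hat w$. It then proves $\|\theta(t)\|_{H^1}\le C\eb$ by expanding the forcing as a Fourier series in the fast variable $\tau$, building an explicit approximate solution from the resolvents $(-(1+i\gamma)\Dx+1+in/\eb)^{-1}$ (with bound $C\eb/|n|$ on $H^s$), and controlling the residual via the $L^2$-in-time maximal regularity $\|\Dt w_\eb\|_{L^2((T,T+1),H)}+\|w_\eb\|_{L^2((T,T+1),H^2)}\le C$. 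Because $\theta$ already sits in $H^1$ with $O(\eb)$ norm, the equation for $\tilde w:=w_\eb-\hat w+\theta$ has forcing that is pointwise $O(\eb)$ in $H^1$, and a plain Gronwall closes.

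Where your sketch is not quite right is the derivative count. Subtracting $\eb(1+i\omega)\Phi(t/\eb,\hat w)$ from $z$ removes the oscillatory forcing but produces the new terms $\eb(1+i\gamma)\px^2\Phi$ and $\eb\,D_w\Phi\cdot\Dt\hat w$; since $\Dt\hat w$ contains $\px^2\hat w$, both terms sit in $H^{s-2}$, not $H^{s-1}$, when one only uses $\hat w\in L^\infty_t H^s$. The dissipation in the $H^s$ energy estimate absorbs exactly one derivative, so your remark that ``the extra derivative \dots\ [is] absorbed into the dissipation'' is off by one. The fix is to invoke the $L^2$-in-time parabolic regularity $\hat w\in L^2_t H^{s+1}$, $\Dt\hat w\in L^2_t H^{s-1}$ (which holds for $w_0\in H^s$); then both extra forcing terms lie in $L^2_t H^{s-1}$, the dissipation handles the remaining one derivative, and Gronwall yields \eqref{3.dist}. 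Your alternative suggestions (work in $H^{s-2}$ and bootstrap, or use instantaneous smoothing) are problematic as stated: for $s=1$ the space $H^{-1}$ is not an algebra, and the smoothing constants blow up as $t\to0$, which is precisely the interval on which the $O(\eb)$ must be captured.

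In summary, your decomposition is a valid alternative to the paper's parabolic-corrector argument, but the step you call ``purely bookkeeping'' is the actual technical heart, and it requires the same $L^2$-in-time maximal regularity the paper exploits --- just deployed in a different place.
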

\begin{proof} Estimate of the form \eqref{3.dist} is a standard
result of the averaging theory and can be referred to as the first
Bogolyubov theorem, see, e.g., \cites{bo,cv,lg,SVM}. However, verifying that the difference between the averaged and non-averaged solutions is of order $O(\eb)$ requires some analysis especially in the case of PDEs. For this reason,
we sketch below  the proof of this fact for the case
of equation \eqref{3.eq1} and $s=1$ (following mainly \cite{efz}).
\par
We first note that, according to the estimate \eqref{2.dis1} all of the
trajectories $w_\eb(t)$ and $\widehat w(t)$ are uniformly bounded in
$H^1$. The usual $L^2$-parabolic regularity theorem applied to
equations \eqref{3.eq1} and \eqref{3.aveq1} gives
\begin{equation}\label{3.est1}
\|\Dt w_\eb\|_{L^2((T,T+1),H)}+\|w_\eb\|_{L^2((T,T+1),H^2)}\le C,
\end{equation}
where the constant $C$ depends on the $H^1$-norm of the initial data $\hat w(0)$, but is independent of $\eb$, as $\eb\to0$, and $T\ge0$, and the same
estimate holds for the limit function $\widehat w$ as well.
\par
Let us define  a function $\theta=\theta_\eb(t)$ as the solution
of the following equation:
\begin{equation}\label{3.eqtheta}
\Dt\theta-(1+i\gamma)\Dx\theta+\theta=(1+i\omega)[F(t/\eb,w_\eb)-N(w_\eb)]=:\widetilde H(t/\eb,w_\eb(t)),
\ \theta(0)=0.
\end{equation}
We claim that
\begin{equation}\label{3.est2}
\|\theta_\eb(t)\|_{H^1}\le C\eb, \ \ \text{ for all } t\ge 0,
\end{equation}
for some constant $C$ that is independent of $\eb$. Indeed,
expanding the  function $\widetilde H(\tau,w_\eb(t,x))$ into the Fourier series
with respect to $\tau$, we have
$$
\widetilde H(\tau,w_\eb(t,x))=\sum_{n\in\Bbb Z}e^{in\tau}F_n(t,x),
$$
where
$$
F_n(t,x):=\frac1{2\pi}\int_{-\pi}^\pi \widetilde H(\tau,w_{\eb}(t,x))e^{-in\tau}\,d\tau.
$$

Moreover, by the definition of the operator $N$,
$$
F_0=0
$$
and, by the Parseval identity
$$
\sum_{n\in\Bbb Z}|F_n(t,x)|^2=2\pi\int_{-\pi}^{\pi}|\widetilde H(\tau,w_{\eb}(t,x))|^2\,d\tau,\ \ \sum_{n\in\Bbb Z}|\partial_{x} F_n(t,x)|^2=2\pi\int_{-\pi}^{\pi}|\partial_{x}\widetilde H(\tau,w_{\eb}(t,x))|^2\,d\tau.
$$
Integrating these equalities in $x$, we get
\begin{equation}\label{add-pars}
\sum_{n\in\Bbb Z}\|F_n(t)\|_{H^1}^2= 2\pi\int_{-\pi}^\pi\|\widetilde H(\tau,w_{\eb}(t))\|_{H^1}^2\,d\tau.
\end{equation}
Since $w_\eb$ are uniformly bounded in $H^1$, we have
\begin{equation}\label{3.sum1}
\sum_{n\in\Bbb Z}\|F_n(t)\|_{H^1}^2\le C<\infty,
\end{equation}
where the constant $C$ is independent of $\eb$ and $t$
(here we have implicitly used \eqref{1.alg1}). Furthermore, using
the fact that $\Dt w_\eb(t)$ are uniformly bounded in
$L^2([T,T+1], H)$, and $w_\eb(t)$ are uniformly bounded in $L^2([T,T+1], H^2)$, with respect to $\varepsilon$, and the explicit formula \eqref{1.non1} for
the nonlinearity $F$, arguing analogously, we can prove that
\begin{equation}\label{3.sum2}
\sum_{n\in\Bbb Z}\|\Dt F_n(\cdot)\|_{L^2([T,T+1], H)}^2\le C,
\end{equation}
where the constant $C$ is independent of $\eb$ and $T$.
\par
We fix approximate solution $\widehat\theta$ of \eqref{3.eqtheta} in
the form
\begin{equation}\label{3.fast}
\widehat\theta(t):=\sum_{n\ne0}e^{int/\eb}(-(1+i\gamma)\Dx+1+ in/\eb)^{-1}F_n(t)
\end{equation}
(we solve equation \eqref{3.eqtheta} with respect to the
``fast" variable considering the ``slow" variable $t$ as a
parameter). Then, since the operator $-(1+i\gamma)\Dx$ generates
an analytic semigroup in $H^s$, we have
\begin{equation}\label{3.ans}
\|(-(1+i\gamma)\Dx+1+in/\eb)^{-1}\|_{H^s\to H^s}\le
C_s\frac{\eb}{n},
\end{equation}
see, e.g., \cite{he}.
Estimate \eqref{3.ans} together with \eqref{3.sum1} gives
\begin{equation}\label{3.theta1}
\|\widehat\theta(t)\|_{H^1}\le C\eb\sum_{n\ne0}\frac{\|F_n(t)\|_{H^1}}n\le
C\eb\(\sum_{n\ne0}\frac1{n^2}\)^{1/2}\(\sum_{n\ne0}\|F_n(t)\|_{H^1}^2\)^{1/2}\le
C'\eb.
\end{equation}
We now set $\tilde\theta:=\theta-\widehat\theta$. Then, this function
solves
\begin{equation}\label{3.eqtheta1}
\Dt\tilde\theta-(1+i\gamma)\Dx\tilde\theta+\tilde\theta=h(t), \ \
\tilde\theta(0)=-\widehat\theta(0),
\end{equation}
where
$$
h(t):=-\sum_{n\ne0}e^{int/\eb}(-(1+i\gamma)\Dx+1+ in/\eb)^{-1}\Dt
F_n(t).
$$
Using now estimate \eqref{3.sum2} together with \eqref{3.ans}, we
obtain
\begin{multline}\label{3.theta2}
\|h\|_{L^2([T,T+1],H)}\le
C\eb\sum_{n\ne0}\frac{\|\Dt F_n\|_{L^2([T,T+1],H)}}n\le\\\le C\eb\(\sum_{n\ne0}\frac1{n^2}\)^{1/2}
\(\sum_{n\ne0}\|\Dt F_n(t)\|_{L^2([T,T+1],H)}^2\)^{1/2}\le
C'\eb,
\end{multline}
where the constant $C'$ is independent of $\eb$ and $T$. Estimates
\eqref{3.theta1} and \eqref{3.theta2} show that the solution
$\tilde\theta$ of equation \eqref{3.eqtheta1} satisfies
$$
\|\tilde\theta(t)\|_{H^1}\le C\eb,
$$
where the constant $C$ depends on the initial condition $w_0$, but is independent of
$\eb$ and $t$. Thus, since $\theta=\widehat\theta+\tilde\theta$,
estimate \eqref{3.est2} is indeed satisfied.
\par
Now, we are ready to finish the proof of the theorem. Let $\tilde
w:=w_\eb-\widehat w+\theta$ where $\theta$ solves \eqref{3.eqtheta}. Then,
this function satisfies the following equation
\begin{equation}\label{3.eqtilde}
\Dt\tilde w-(1+i\gamma)\Dx \tilde w=\beta\tilde
w-(\beta+1)\theta+(1+i\omega)[N(\widehat w)-N(w_\eb)], \ \ \tilde
w(0)=0.
\end{equation}
Multiplying this equation by $\Dx \tilde w$ and integrating over
$x$, we deduce that
\begin{equation}\label{3.est4}
\frac{d}{dt}\|\tilde w\|^2_{H^1}\le C(\|\tilde
w\|^2_{H^1}+\|\theta\|^2_{H^1}+\|N(w_\eb)-N(\widehat w)\|^2_{L^2}).
\end{equation}
Using that $w_\eb$ and $\widehat w$ are uniformly bounded in $H^1$,
one can easily see that
$$
\|N(w_\eb)-N(\widehat w)\|_{L^2}\le C\|\tilde w\|_{H^1}+\|\theta\|_{H^1}
$$
and, consequently,
$$
\frac d{dt}\|\tilde w\|^2_{H^1}\le K\|\tilde
w\|^2_{H^1}+C\|\theta\|^2_{H^1}
$$
for some $K$ and $C$ independent of $t$ and $\eb$. Applying the
Gronwall's inequality for that relation and using \eqref{3.est2}, we
deduce estimate \eqref{3.dist} and finish the proof of the
theorem.
\end{proof}
As a standard corollary of this theorem (see, for instance, \cite{cv} or \cite{ha}), we obtain the following
result on the convergence of global attractors $\Cal A_{GL1}(L)$
and $\Cal A_{GL2}(L)$ as $L\to\infty$.

\begin{corollary}\label{Cor3.attrconv} Let the assumptions of Theorem \eqref{Th3.average}
hold, and let $\widehat{\Cal A}_{GL1}$ and $\widehat{\Cal A}_{GL2}$ be the
global attractors of equations \eqref{3.aveq1} and \eqref{3.aveq2},
respectively. Then the family of global attractors $\Cal
A_{GL1}(L)$ (resp. $\Cal A_{GL2}$) of equations \eqref{0.eq1}
(resp. \eqref{0.eq2}) converge, as $L\to\infty$, to the global
attractors $\widehat{\Cal A}_{GL1}$ (resp. $\widehat{\Cal A}_{GL2}$)
associated with limit equations \eqref{3.aveq1} (resp. \eqref{3.aveq2})
in the sense of the upper semi-continuity in $H^s$, $s\ge0$:
\begin{equation}\label{3.attrconv}
\lim_{L\to\infty}\dist_{H^s}(\Cal A_{GL1}(L),\widehat{\Cal
A}_{GL1})=\lim_{L\to\infty}\dist_{H^s}(\Cal A_{GL2}(L),\widehat{\Cal
A}_{GL2})=0,
\end{equation}
recall that $\dist_V(X,Y)$ denotes the  non-symmetric Hausdorff distance between sets $X$ and $Y$ in the $V$ norm.
\end{corollary}
\begin{proof} Let $u_0\in\Cal A_{GLi}$ for $i=1$ or $i=2$. We need to estimate the distance between $u_0$ and the global attractor $\hat{\Cal A}_{GLi}$. According to \eqref{add-str}, there exists a complete bounded trajectory $u_\eb(t)$, $t\in\R$, such that $u_\eb(t)\in\Cal A_{GLi}$, for all
$t\in\R$. Moreover, due to \eqref{2.unattr}, this trajectory is uniformly bounded by a constant independent of $\eb=1/L$ in any space $H^s$.
Let also $w_\eb(t):=\Cal H_{L}(-t)u_\eb(t)$ (resp. $w_\eb(t):=\Cal F_L(-t)u_\eb(t)$). Then, clearly
$$
u_\eb(0)=w_\eb(0)=u_0
$$
and $w_\eb(t)$, $t\in\R$, solves equation \eqref{3.eq1} (resp. \eqref{3.eq2}). Take now an arbitrary $T>0$ and consider the solution $\hat w(t)$, for $t\ge -T$, of the averaged equation \eqref{3.aveq1} (resp. \eqref{3.aveq2}) with the initial data
$$
\hat w(-T)=w_\eb(-T).
$$
Then, due to estimate \eqref{3.dist}, we have
\begin{equation}\label{add-adist}
\|u_0-\hat w(0)\|_{H^s}\le \|w_\eb(0)-\hat w(0)\|_{H^s}\le C_s\eb e^{KT},
\end{equation}
where the constants $C$ and $K$ depend only on $s$. On the other hand, since the limit averaged equation possesses a global attractor in $H^s$, for any $\delta>0$ we may find $T=T(\delta,s)$ such that
$$
d_{H^s}(\hat w(0),\hat {\Cal A}_{GLi})\le \delta/2
$$
for $i=1$ or $i=2$, respectively. Finally, fix $\eb\in(0,\eb_0]$ where $\eb_0>0$ is small enough that $C_s\eb_0 e^{KT}\le\delta/2$, and using the triangle inequality, we see that
$$
d_{H^s}(u_0,\hat{\Cal A}_{GLi})\le \delta/2+\delta/2=\delta,
$$
and since $\delta>0$ and $u_0$ are arbitrary, this inequality proves the desired convergence \eqref{3.attrconv}, which finishes the proof of the corollary.
\end{proof}

We now turn to the case of Kuramoto-Sivashinsky equation
\eqref{0.eq3}. In contrast to the case of cubic nonlinearities,
the quadratic Kuramoto-Sivashinsky nonlinearity disappears after
the averaging and we end up with the non-dissipative averaged
equation. This explains why the norm of the global attractor grows with $L$, as
$L\to\infty$. To be more precise, the following result holds.

\begin{theorem}\label{Th3.KSattr} Assume that $a\ne k^2$, $k\in\Bbb N$. Then,
for every $R>0$ there exists
$L_0=L_0(R)$ such that $u(t)\equiv0$ is the only complete trajectory
on the global attractor $\Cal A_{KS}(L)$,  for $L\ge L_0$ which belongs to the
$R$-ball in $H$ for all $t\in\R$, i.e, the inequality
\begin{equation}\label{3.estks1}
\|u(t)\|_{H}\le R, \ \ \ t\in\R,
\end{equation}
where $u\in\Cal A_{KS}(L)$,
implies that $u\equiv0$.
\end{theorem}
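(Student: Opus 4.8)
The plan is to remove the dispersion exactly as in Section~\ref{s3}. Put $\varepsilon=1/L$ and $u(t)=\mathcal H_L(t)w(t)$; since $\mathcal H_L(t)$ commutes with $\partial_x^2$ and $\partial_x^4$ and is an $H$-isometry preserving reality and zero mean, the function $w$ is a real, zero-mean solution of
$$
\partial_t w=-\partial_x^4 w-a\,\partial_x^2 w+\widetilde H(t/\varepsilon,w),
$$
with $\widetilde H$ the nonlinearity of Lemma~\ref{Lem1.av3}. By Remark~\ref{Rem1.real} the time average of $\widetilde H$ on real zero-mean functions vanishes identically, $K(w)\equiv0$ (see \eqref{1.av3re}), so the formally averaged equation is the \emph{linear} equation $\partial_t\widehat w=-\partial_x^4\widehat w-a\,\partial_x^2\widehat w$. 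Its generator $\Lambda$ has Fourier symbol $\lambda_n=n^2(a-n^2)$, and the hypothesis $a\ne k^2$ ($k\in\mathbb N$) guarantees that \emph{no} symbol vanishes on $H_0$: $\Lambda$ is hyperbolic, with a spectral gap separating the finitely many unstable modes ($0<n^2<a$, $\lambda_n\ge\alpha_-(a)>0$) from the stable ones ($n^2>a$, $\lambda_n\le-\alpha_+(a)<0$). Let $P_\pm$ be the corresponding spectral projections, $w_\pm=P_\pm w$ and $\Lambda_\pm=\Lambda|_{P_\pm H}$. Since the change of variables is an $H$-isometry, a complete trajectory $u$ on $\mathcal A_{KS}(L)$ with $\|u(t)\|_H\le R$ corresponds to a complete solution $w$ of the $w$-equation with $\|w(t)\|_H\le R$ for all $t\in\mathbb R$, and $u\equiv0$ iff $w\equiv0$; so I must show that such a $w$ is trivial once $L\ge L_0(R)$.

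\textbf{Step 1: an $O(1/L)$ bound.} Because the nonlinearity $\widetilde H(\tau,\cdot)\colon H^s\to H^{s-1}$ is bounded quadratically with constants independent of $\tau$ (hence of $L$), by \eqref{1.alg2}, and $-\partial_x^4$ generates an analytic semigroup, the standard parabolic smoothing for the $w$-equation (bootstrapping from $\|w(t)\|_H\le R$) gives, with $L$-independent constants, $\|w(t)\|_{H^s}\le R_s(R)$ for all $t\in\mathbb R$ and all $s\ge0$, and correspondingly $\|\partial_t w(t)\|_{H^s}$ is bounded. Using the hyperbolicity of $\Lambda$ and the boundedness of $w$, Duhamel's formula yields the representation
$$
w_+(0)=\int_{-\infty}^0 e^{-s\Lambda_+}P_+\widetilde H(s/\varepsilon,w(s))\,ds,\qquad
w_-(0)=-\int_0^{\infty} e^{-s\Lambda_-}P_-\widetilde H(s/\varepsilon,w(s))\,ds,
$$
the boundary terms dropping out since $\|e^{t\Lambda_+}\|\le e^{-\alpha_+t}$ and $\|e^{-t\Lambda_-}\|\le e^{-\alpha_-t}$ for $t\ge0$. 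Now I would integrate by parts in the fast variable, as in the proof of Theorem~\ref{Th3.average}: since $\langle\widetilde H(\cdot,v)\rangle=0$, the primitive $\Psi(\tau,v):=\int_0^\tau\widetilde H(\sigma,v)\,d\sigma$ is $2\pi$-periodic in $\tau$, smooth, and quadratically bounded in every $H^s$ (with one derivative loss). Writing $\widetilde H(s/\varepsilon,w(s))=\varepsilon\tfrac{d}{ds}\Psi(s/\varepsilon,w(s))-\varepsilon\,\partial_v\Psi(s/\varepsilon,w(s))\partial_s w(s)$ and integrating the first term by parts produces an overall factor $\varepsilon=1/L$; the resulting integrals — including $\int\Lambda_\pm e^{-s\Lambda_\pm}\Psi(s/\varepsilon,w(s))\,ds$, whose mild singularity at $s=0$ is absorbed by the extra smoothness of $w$ — all converge and are bounded by $C(R)$. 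This gives
$$
\sup_{t\in\mathbb R}\|w(t)\|_{H^1}\le\frac{C(R)}{L},
$$
with $C(R)$ independent of $L$ (the estimate is done at $t=0$ but is translation invariant).

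\textbf{Step 2: the small ball is trivial.} Keeping now only the elementary bounds $\|e^{t\Lambda_+}\|_{H\to H^1}\le C(1+t^{-1/4})e^{-\alpha_+ t/2}$, $\|e^{-t\Lambda_-}\|_{H\to H^1}\le Ce^{-\alpha_- t}$ for $t>0$, and $\|\widetilde H(\tau,v)\|_{H}\le C\|v\|_{H^1}^2$, the representation formulas of Step~1 give, at every $t_0\in\mathbb R$,
$$
\|w(t_0)\|_{H^1}\le C_*\Big(\sup_{t\in\mathbb R}\|w(t)\|_{H^1}\Big)^2,\qquad C_*=C_*(a).
$$
Taking the supremum over $t_0$, with $\rho:=\sup_t\|w(t)\|_{H^1}$, I get $\rho\le C_*\rho^2$, i.e.\ $\rho=0$ or $\rho\ge1/C_*$. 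By Step~1, $\rho\le C(R)/L<1/C_*$ as soon as $L\ge L_0(R):=C(R)C_*$; hence $\rho=0$, $w\equiv0$, and therefore $u\equiv0$. This proves the theorem.

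\textbf{Expected main obstacle.} Step~2 and the parabolic regularity are routine; the real work is the $O(1/L)$ gain in Step~1. Integrating by parts against the \emph{fourth-order} analytic semigroup produces the factor $\Lambda_\pm e^{-s\Lambda_\pm}$, which is singular at $s=0$, and closing the estimate requires carefully exploiting the smoothing of $w$ and the Kuramoto--Sivashinsky nonlinearity being a total spatial derivative (so that $\Psi$ is one derivative smoother than a crude count would suggest). The decisive structural input is the identity \eqref{1.av3re}, $K(w)\equiv0$: it is exactly what makes $\Psi$ periodic — hence bounded — in the fast variable, and it is also what makes the averaged equation linear and, via $a\ne k^2$, hyperbolic, which is what drives the whole argument.
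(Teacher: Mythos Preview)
Your proof is correct and rests on the same three structural pillars as the paper's: the change of variables $u=\mathcal H_L(t)w$, the identity $K(w)\equiv0$ which makes the averaged equation linear, and the hyperbolicity of $\Lambda=-\partial_x^4-a\partial_x^2$ on $H_0$ coming from $a\ne k^2$. The execution, however, is different. For the $O(\varepsilon)$ step the paper does not write Duhamel integrals for $w$ around the hyperbolic equilibrium; instead it fixes $\tau\in\R$, solves the linear averaged equation with data $\widehat w(\tau)=w(\tau)$, proves (exactly as in Theorem~\ref{Th3.average}) a finite-time comparison $\|w(t+\tau)-\widehat w(t+\tau)\|_H\le C\varepsilon e^{Kt}$, and then combines this with the exponential dichotomy of $\widehat w$ and a supremum-over-$\tau$ trick to force $\sup_\tau\|P_N w(\tau)\|_H$ and $\sup_\tau\|Q_N w(\tau)\|_H$ to be $O(\varepsilon)$ separately. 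For the final step the paper simply invokes the implicit function theorem around the hyperbolic equilibrium $w=0$ (using that $\widetilde H$ vanishes to second order there), whereas you make this explicit via the clean quadratic inequality $\rho\le C_*\rho^2$. Your route is more self-contained---it bypasses the averaging comparison lemma entirely by integrating by parts in the fast variable inside the Duhamel formula---at the price of having to control the mildly singular term $\Lambda_+e^{-s\Lambda_+}\Psi$, which you correctly note is handled by the uniform $H^s$-smoothness of $w$ (so that $\Psi\in D(\Lambda)$). The paper's route, on the other hand, recycles the machinery of Section~\ref{s3} and keeps the $O(\varepsilon)$ estimate entirely in $H$. Both arguments are equally valid.
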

\begin{proof} Let $u(t)$ be a complete trajectory of the KS
equation \eqref{0.eq3} satisfying \eqref{3.estks1}. Then,
according to the smoothing property, see \eqref{2.estsmooth}, we
infer that
\begin{equation}\label{3.estks2}
\|u(t)\|_{H^s}\le R_s, \ \ \ t\in\R,
\end{equation}
where the constants $C_s$ depend on $s$, but are independent of $L$.
\par
Let us now introduce a new dependent variable $w(t):=\Cal
H_L(-t)u(t)$. Then, equation \eqref{0.eq3} reads
\begin{equation}\label{3.eq3}
\Dt w=-\px^4 w-a\px^2 w+ \widetilde H(t/\eb,w),
\end{equation}
where the operator $\widetilde H$ is defined by \eqref{1.non3}. Using now the
fact that $u$ is real, we obtain that $w_{-n}=\bar w_n$ and, since
$\<u\>_{sp}=0$, formula \eqref{1.av3re} gives that the $\tau$-average
of $\widetilde H$ equals zero. Thus, the limit averaged equation for \eqref{3.eq3}
reads
\begin{equation}\label{3.aveq3}
\Dt\widehat w=-\px^4\widehat w-a\Nx^2 \widehat w.
\end{equation}
Let us fix $\tau\in\R$ and a solution $\hat w(t)$, $t\ge\tau$ of equation \eqref{3.aveq3} such that
$$
\hat w(\tau)=w(\tau).
$$
Then, using estimate \eqref{3.estks2} and arguing exactly as
in Theorem \ref{Th3.average}, we establish that, for any $\tau\in\R$ and $t\ge0$, the following estimate holds:
\begin{equation}\label{3.estks4}
\|w(t+\tau)-\widehat w(t+\tau)\|_H\le C\eb e^{Kt},\ t\ge0,
\end{equation}
where the positive constants $C$ and $K$ are independent of $t$,
$\tau$ and $\eb:=1/ L$.
\par
However, the situation is principally different from the case of
Ginzburg-Landau equations, since the averaged equation is now
linear and have exponentially growing modes. We claim that
inequalities \eqref{3.estks4} and \eqref{3.estks1} imply the
estimate
\begin{equation}\label{3.kssmall}
\|u(t)\|_H\le C\eb,\ \ t\in\R,
\end{equation}
for some positive $C$  which may depend on $R$, but
is independent of $\eb\to0$. Instead of proving estimate \eqref{3.kssmall} for the function $u$, we will prove its analogue for $w$ which is equivalent to \eqref{3.kssmall} since $\mathcal H_L$ is an isometry in $H$.
\par
Indeed, let $P_N$ be an orthogonal projector onto
the unstable modes of equation \eqref{3.aveq3} (i.e., to the
vectors $e_n$ with $n^2<a$) and $Q_N:=1-P_N$ is the associated
projector to the stable ones (we recall that $\sqrt a\notin\Bbb
N$, so the equilibrium $\widehat w=0$ is hyperbolic). Thus, the
solution $\widehat w$ satisfies the standard exponential dichotomy
estimates
\begin{equation}\label{3.diest}
\begin{cases}
\|P_N\widehat w(t+\tau)\|_H\ge C\|P_N\widehat w(\tau)\|_He^{\alpha t},\ t\ge0,\\
\|Q_N\widehat w(t+\tau)\|_H\le C\|Q_N\widehat w(\tau)\|_He^{-\alpha t},\ t\ge0,
\end{cases}
\end{equation}
for some positive $C$ and $\alpha$. Let us first consider
$P_N$-component of $w$. Then, using the solution $\widehat w$  and estimate \eqref{3.diest}, we get
$$
\|P_N w(t+\tau)\|_{H}\ge\|P_N\hat w(t+\tau)\|_H-\|w(t+\tau)-\hat w(t+\tau)\|_H\ge C\|P_N w(\tau)\|_He^{\alpha t}-C\eb e^{Kt}
$$
which gives
$$
C\|P_N w(\tau)\|_He^{\alpha t}\le C\eb e^{Kt}+\|P_N w(t+\tau)\|_H.
$$
Taking now the supremum over $\tau\in\R$ from the both sides of
that inequality, we arrive at
\begin{equation}\label{3.pnsmall}
\sup_{\tau\in\R}\|P_N w(\tau)\|_H\le C\eb\frac{e^{Kt}}{Ce^{\alpha t}-1}.
\end{equation}
Fixing here $t$ in an optimal way, we obtain the analogue of
\eqref{3.kssmall}
for the $P_N$-component of $w$:
\begin{equation}\label{3.smallpn}
\sup_{\tau\in\R}\|P_N w(\tau)\|_H\le C^*\eb,
\end{equation}
where $C^*$ is independent of $\eb$.
\par
Let us consider now the $Q_N$-component of $w$. Then, the
analogous arguments lead to the inequality
$$
\|Q_N w(t+\tau)\|_H\le C_1\|Q_N w(\tau)\|_He^{-\alpha t}+C_2\eb
e^{Kt}.
$$
Taking again the supremum over $\tau\in\R$, we arrive at
$$
\sup_{\tau\in\R}\|Q_N w(\tau)\|_H\le
C_2\eb\frac{e^{Kt}}{1-C_1e^{-\alpha t}}, \text{ for all } t>\frac{\ln(C_1+1)}{\alpha}.
$$
Minimizing the right-hand side with respect to $t$ over $[ln(C_1+ 1)/\alpha, \infty)$ we obtain
\begin{equation}\label{3.smallqn}
\sup_{\tau\in\R}\|Q_N w(\tau)\|_H\le C^*\eb.
\end{equation}
Thus, estimate \eqref{3.kssmall} is verified.
\par
We see that any trajectory $w(t)$ of equation
\eqref{3.eq3} which is contained in the ball of radius $R$ in $H$ should belong to the $C\eb$-neighbourhood of zero
equilibrium $w\equiv0$  (where the constant $C$ depends on $R$,
but is independent of $\eb=1/L$). It remains to note that, since
the
equilibrium $w\equiv0$ is hyperbolic and $\tilde H(\tau,0)=0$, $D_w\tilde H(\tau,w)|_{w=0}=0$, the usual implicit function theorem shows that
the only solution $w(t)$ of \eqref{3.eq3} which belongs to some small
$r$-ball of $H$ for any $t$ ($r$ is independent of $\eb$!) is
$w\equiv0$. Thus, $u\equiv0$ if $\eb$ is small enough and the
theorem is proved.
\end{proof}

The proof of Theorem \ref{Th3.KSattr} indicates particularly that the diameter of  the global attractor $\mathcal A_{KS}(L)$ (say, in the $H$-norm) indeed expands as $L\to\infty$:
\begin{equation}\label{new.4.attr}
\lim_{L\to\infty}\|\mathcal A_{KS}(L)\|_H=\infty,
\end{equation}
but gives no information about the rate of expansion. The next proposition removes this drawback and shows that the upper bound given by estimate \eqref{2.dis3} is optimal. The proof is based on the well-known fact on the existence of rotating waves for the perturbed KdV equation with periodic boundary conditions, see \cite{Oga} (see also \cite{erc} for the numerical study of the stability of these waves as well as the related attractors).

\begin{proposition}\label{Prop4.KSlow} Let $a>1$ and $L$ be large enough. Then, the global attractor $\mathcal A_{KS}(L)$ satisfies the following estimate:
\begin{equation}
C_1 L\le \|\mathcal A_{KS}(L)\|_H\le C_2 L,
\end{equation}
where the positive constants $C_1, C_2$ are independent of $L$.
\end{proposition}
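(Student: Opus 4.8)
The plan is to treat the two inequalities separately; the upper bound is immediate, while the lower bound is where the work lies. For the upper bound, recall from the dissipative estimate \eqref{2.dis3} that the ball $\{u\in H_0:\ \|u\|_H^2\le 2C(L^2+1)\}$ is absorbing for equation \eqref{0.eq3}; since the global attractor is contained in every absorbing set, $\|\mathcal A_{KS}(L)\|_H\le\sqrt{2C(L^2+1)}\le C_2 L$ for $L\ge 1$.

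For the lower bound, the idea is to exhibit a \emph{rotating wave} of \eqref{0.eq3}, i.e.\ a solution $u(t,x)=U_L(x-c_Lt)$ with a smooth $2\pi$-periodic profile $U_L$ of zero mean, and to check that $\|U_L\|_H\ge C_1 L$. Such a solution is a complete trajectory whose orbit $\{U_L(\cdot-s):s\in\R\}$ is a smooth compact (hence bounded) curve in $H_0$, so it lies on $\mathcal A_{KS}(L)$ — a complete bounded trajectory always does, cf.\ Corollary \ref{Cor2.attr3} — and therefore $\|\mathcal A_{KS}(L)\|_H\ge\|U_L\|_H$. To build $U_L$, I would substitute $u(t,x)=U(x-ct)$ into \eqref{0.eq3} and rescale $U=Lv$, $c=L\mu$; dividing through by $L^2$ turns the profile equation into
\begin{equation*}
v'''+vv'+\mu v'=\tfrac1L\bigl(v''''+av''\bigr),
\end{equation*}
a singular $O(1/L)$ perturbation of the stationary KdV (cnoidal-wave) equation $v'''+vv'+\mu v'=0$, in which one keeps only mean-zero $v$ (as required for $U_L\in H_0$). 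The unperturbed equation carries branches of smooth $2\pi$-periodic mean-zero cnoidal waves $v_0$, bifurcating from $v\equiv 0$ at the values $\mu=1,4,9,\dots$. A Lyapunov--Schmidt reduction, after factoring out the translation symmetry, continues a suitable $v_0$ (with speed $\mu_0$) to a genuine solution $(v_L,\mu_L)$ of the perturbed equation for $L$ large, with $v_L\to v_0$ in every $H^s$; the solvability (Melnikov) condition that selects the admissible $v_0$ is obtained by testing the $O(1/L)$ forcing against the element $v_0$ of $\ker(\mathcal L_0^{*})$, where $\mathcal L_0$ is the linearization at $v_0$ of the traveling-wave operator $v\mapsto v'''+vv'+\mu_0 v'$, and it reduces to
\begin{equation*}
\int_{-\pi}^{\pi}\bigl(v_0''''+a\,v_0''\bigr)v_0\,dx=0,\qquad\text{equivalently}\qquad a=\frac{\|v_0''\|_H^2}{\|v_0'\|_H^2}.
\end{equation*}
This quotient equals $1$ only in the zero-amplitude limit and strictly exceeds $1$ for every cnoidal wave of nonzero amplitude, so for $a>1$ one can select a cnoidal wave with $\|v_0''\|_H^2=a\|v_0'\|_H^2$ — this selection is exactly the existence-of-rotating-waves result of Ogawa \cite{Oga}, and it is where the hypothesis $a>1$ enters. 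Fixing such a $v_0$ (which has a definite positive amplitude) and setting $U_L:=Lv_L$, $c_L:=L\mu_L$, one obtains a rotating wave of \eqref{0.eq3} with $\|U_L\|_H=L\|v_L\|_H\ge\tfrac12 L\|v_0\|_H=:C_1 L$ for $L$ large, which finishes the argument.

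The hard part is the persistence step, which is why it is delegated to \cite{Oga}: the perturbation $\tfrac1L(v''''+av'')$ is of higher order than the unperturbed KdV operator, so the implicit function theorem has to be run in a genuinely singularly perturbed regime. One must verify that the relevant linearized operators — which combine $\tfrac1L\partial_x^4$ with the third-order linearization $\mathcal L_0$ at $v_0$ — are boundedly invertible uniformly in $L$ on a complement of $\ker\mathcal L_0$, and that the bifurcation and selection equations can be solved simultaneously with $\mu=\mu_L$ near $\mu_0$. By comparison the rest — the scaling $U=Lv$, the smoothness of the profile and compactness of its orbit, and the inclusion of that orbit in $\mathcal A_{KS}(L)$ — is routine.
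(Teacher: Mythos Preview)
Your proposal is correct and follows essentially the same route as the paper: the upper bound comes straight from \eqref{2.dis3}, and for the lower bound both you and the paper rescale $u=Lv$ to reduce the rotating-wave problem to an $O(1/L)$ perturbation of KdV and then invoke Ogawa \cite{Oga} for the persistence of the cnoidal wave. The only difference is cosmetic (you substitute the traveling-wave ansatz first and then rescale the profile ODE, whereas the paper rescales the full evolution equation and passes to fast time $\tau=Lt$ before seeking a traveling wave), and you spell out the Melnikov selection condition $a=\|v_0''\|_H^2/\|v_0'\|_H^2$ that explains the hypothesis $a>1$, which the paper simply leaves to the citation.
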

\begin{proof} Indeed, the upper bound is an immediate corollary of estimate \eqref{2.dis3}, so we only need to establish the lower one.
To this end, we do change of variables $u(t)=Lv(t)$ where $v$ is a new dependent variable. Then, equation \eqref{0.eq3} reads
$$
\eb \Dt v=\eb(-\px^4 v-a\px^2 v)+v\px v+\px^3 v,
$$
and, finally, introducing the fast variable $\tau=Lt$, we end up with
\begin{equation}\label{new.KdV}
\partial_\tau v=\eb(-\px^4 v-a\px^2 v)+v\px v+\px^3 v,
\end{equation}
which is a  well studied small damped-driven perturbation
of the Korteweg de Vries equation. In particular, as shown in \cite{Oga}, if $a>1$ and $\eb$ is small enough, there is a rotating wave solution
$$
v(\tau,x)=V(x-c \tau)
$$
of this equation, where $V=V_\eb(\xi)$ is a $2\pi$-periodic function with zero mean and $c=c_\eb$ is a wave speed, both of which have finite non-zero limits as $\eb\to0$. Since this rotating wave obviously belongs to the global attractor, it gives the desired lower bound and finishes the proof of the proposition.
\end{proof}

\section{Reduction to the gradient case}\label{s4}

The main aim of this section is to give more comprehensive study of the dynamics of the complex Ginzburg-Landau equation \eqref{0.eq2} for large dispersion parameter $L$.
As we will see, the large dispersion suppresses the non-trivial dynamics and makes the system gradient-like up to some isometric transformation.
\par
Indeed, the limit averaged equation for \eqref{0.eq2} has the following form, see \eqref{3.aveq2}:
\begin{equation}\label{4.aver2.gen_f}
\frac d{dt}w_n=-n^2w_n+\beta w_n - (1 + i\omega)(2 w_n\|w\|^2_H - w_n|w_n|^2),\ \ \ n\in\Bbb Z,
\end{equation}
where $w(t)=\sum_{n\in\Bbb Z}w_n(t)e_n$ and $\beta = \alpha + i \gamma$.
\par
Introducing the new variables $v_n$ such that $w_n = e^{i A_n(t)}v_n,  n\in \Bbb Z$, and the phases $A_n(t)$ will be determined later, we get
\begin{equation}\label{4.aver2.tr}
i v_n \frac{d}{dt}A_n(t)+ \frac{d}{dt}v_n = - n^2 v_n + (\alpha + i \gamma)v_n - (1 + i \omega)v_n(2\|v\|^2_H + |v_n|^2), \ \ \ n\in \Bbb Z.
\end{equation}
Thus,
 if we fix the phases $A_n$ as follows:
 \begin{equation}\label{5.new.ph}
 A_n(t): = \int_0^t(\gamma - 2\omega \|v\|^2_H +\omega|v_n|^2)dt=\int_0^t(\gamma - 2\omega \|w\|^2_H +\omega|w_n|^2)dt,
 \end{equation}
we arrive to the equation with real coefficients

\begin{equation}\label{4.aveq2}
\frac d{dt}v_n=-n^2v_n+\alpha v_n-2v_n\|v\|_H^2+v_n|v_n|^2,\ \ \ n\in\Bbb Z.
\end{equation}
Moreover, any solution $w(t)$ of \eqref{4.aver2.gen_f} determines in a unique way the corresponding solution $v(t)$ of \eqref{4.aveq2} and vice versa. Therefore, it is sufficient to study equations \eqref{4.aveq2} only.
\par
Furthermore, equations \eqref{4.aveq2} possess a global Lyapunov function
\begin{equation}\label{4.lyap}
\Cal L(v):=\|v\|^2_{H^1}+\|v\|^4_H-\sum_{n\in\Bbb Z}\frac12|v_n|^4 - (\alpha + 1)\|v\|^2_H.
 \end{equation}
Indeed, as direct calculations show
\begin{equation}\label{4.dt}
\frac d{dt}\Cal L(v(t))=-2\sum_{n\in\Bbb Z}\frac{d}{dt}v_n \frac{d}{dt}\overline{v}_n=-2\sum_{n\in\Bbb Z}|\frac d{dt}v_n|^2.
\end{equation}
We are going to apply what is called regular attractors theory (see, e.g. \cite{bv}) in order to describe
the global attractor $\widehat{\Cal A}_{GL2}$ of the limit equation \eqref{3.aveq2} and the global attractor $\Cal A_{GL2}(L)$  of the perturbed system \eqref{3.eq2}, for $L$ is large enough.
However, our situation is slightly more complicated in comparison with the standard theory since equation \eqref{4.aveq2}
possesses a huge symmetry group and, for this reason,  all of the equilibria are degenerate. Indeed, it follows from the structure
of \eqref{4.aveq2} that the group $\R^\infty$ acting on the phase space by
\begin{equation}\label{4.symgr}
[\Cal S(\phi)v]_n=e^{i\phi_n}v_n,\ \ \ \phi_n\in\R,\ \ \phi=\{\phi_n\}_{n\in\Bbb Z}\in\Bbb R^\infty
\end{equation}
is a symmetry group of \eqref{4.aveq2}. For this reason, if $v$ is an equilibrium of equation \eqref{4.aveq2}, then we automatically have the
whole torus of equilibria generated by this symmetry group:
\begin{equation}\label{4.tori}
\Bbb T_{v}:=\{\Cal S(\phi)v,\ \ \phi\in\R^\infty\}
\end{equation}
(due to Proposition \ref{Prop3.inv}, $v_n\equiv 0$ for $|n|\ge N=N(\beta)$, for any equilibrium and, consequently,
all these tori are, in a fact, finite-dimensional). Thus, the assumption on the hyperbolicity of equilibria should
be naturally replaced by the assumption that all of the equilibria tori \eqref{4.tori} are normally hyperbolic.
\par
In order to verify this normal hyperbolicity and the consequent structure of the global attractor $\widehat{\Cal A}_{GL2}$ of the limit equation \eqref{3.aveq2},
we use a simple observation that a real hyperplane
\begin{equation}\label{4.real}
H^{real}:=\{v\in H,\ v_n\in\Bbb R,\ n\in\Bbb Z\}
\end{equation}
is invariant with respect to the limit equation \eqref{4.aveq2}. Moreover, every initial data $v\in H$ can be
reduced to this hyperplane by the appropriate action of the symmetry group $\Cal S(\phi)$. Thus, it is sufficient
to check all of the hyperbolicity assumptions for the case of {\it real} equations \eqref{4.aveq2} and after that obtain
the required result for the initial complex phase space by the action of the symmetry group. In particular, one has a natural
relation between the real and complex attractors:
\begin{equation}\label{4.rc}
\widehat{\Cal A}_{GL2}=\{\Cal S(\phi)\widehat {\Cal A}^{re}_{GL2},\ \phi\in\R^\infty\}.
\end{equation}
The next lemma gives an explicit description of all possible equilibria for problem \eqref{4.aveq2} and establishes their
hyperbolicity for generic $\beta$.

\begin{lemma} \label{Lem4.eqdescription} The set  $\Cal R$ of all equilibria of equation \eqref{4.aveq2} consists of $v = 0$ and $v\ne0$ such that the non-zero components $\{v_n\}_{n\in\Bbb Z}$
possess the following description: let
$$
N_0:=\sum_{n,v_n\ne0}1,\ \ \ N_2:=\sum_{n,v_n\ne0}n^2.
$$
Then, for $v_n\ne0$,
\begin{equation}\label{4.equII}
\begin{cases}
\|v\|^2_H=\frac {N_0}{2N_0-1}\alpha-\frac{N_2}{2N_0-1}>0,\\
|v_n|^2=n^2+\frac{\alpha-2N_2}{2N_0-1}>0
\end{cases}
\end{equation}
and every sequence of $v_n$ satisfying these conditions gives an equilibrium.
\par
Moreover, a non-zero equilibrium $v$ is not hyperbolic (i.e., the corresponding torus is not normally hyperbolic) if and only if
\begin{equation}\label{4.2hyp}
k^2+\frac{\alpha-2N_2}{2N_0-1}=0 \ \ \text{for some $k$ for which $v_k=0$}.
\end{equation}
Finally, zero equilibrium is hyperbolic if and only  if  $\alpha\ne k^2$ for some $k\in\Bbb Z$. In particular, all of the equilibria are hyperbolic if $\alpha\ne\Bbb Z$.
\end{lemma}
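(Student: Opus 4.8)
The plan is to treat the equilibrium description and the (normal) hyperbolicity separately. For the description, I would start from the observation that a stationary solution of \eqref{4.aveq2} is a sequence $v=\{v_n\}$ with $v_n\big(-n^2+\alpha-2\|v\|^2_H+|v_n|^2\big)=0$ for every $n$, so for each $n$ either $v_n=0$ or $|v_n|^2=n^2-\alpha+2\|v\|^2_H$. Put $S:=\{n:\ v_n\ne0\}$, $N_0=\#S$, $N_2=\sum_{n\in S}n^2$. Since $\sum_{n\in S}|v_n|^2=\|v\|^2_H<\infty$ while $|v_n|^2\ge n^2-|\alpha|$ on $S$, the set $S$ is finite (this is also guaranteed by Proposition \ref{Prop3.inv}). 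Summing $|v_n|^2=n^2-\alpha+2\|v\|^2_H$ over $n\in S$ gives the single scalar identity $\|v\|^2_H=N_2-N_0\alpha+2N_0\|v\|^2_H$, whose unique solution is the first line of \eqref{4.equII}; substituting it back into $|v_n|^2=n^2-\alpha+2\|v\|^2_H$ yields the second line. Both quantities are positive whenever $v\ne0$, and conversely, for any finite $S$ making them positive one checks directly that prescribing $|v_n|^2$ by \eqref{4.equII} with arbitrary phases (and $v_n=0$ off $S$) satisfies the self-consistency identity and so produces an equilibrium; this is exactly the source of the equilibria tori \eqref{4.tori}.

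For the hyperbolicity I would linearize \eqref{4.aveq2} at a non-zero equilibrium $v^0$ supported on $S$, writing $v_n=v_n^0+\xi_n$. The modes $n\notin S$ decouple, since every non-diagonal term on the right-hand side carries the factor $v^0_n=0$; their equation is $\dot\xi_n=\big(-n^2+\alpha-2\|v^0\|^2_H\big)\xi_n$, and a one-line computation with the first line of \eqref{4.equII} rewrites the coefficient as $-\big(n^2+\tfrac{\alpha-2N_2}{2N_0-1}\big)$. For $n\in S$ I would substitute $\xi_n=v^0_n\zeta_n$ and use the equilibrium relation $-n^2+\alpha-2\|v^0\|^2_H+|v^0_n|^2=0$; the equations for $\zeta_n$ then have a \emph{real} right-hand side depending only on $p_n:=\operatorname{Re}\zeta_n$, namely $\dot p_n=2q_np_n-4\sum_{m\in S}q_mp_m$ with $q_n:=|v^0_n|^2>0$, while $\operatorname{Im}\zeta_n$ is conserved. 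The $N_0$ directions $\operatorname{Im}\zeta_n$ are precisely the tangent space of $\Bbb T_{v^0}$ (they correspond to the perturbations $\xi=iv^0_ne_n$), hence unavoidable neutral modes; the $p$-block is governed by $A=2\operatorname{diag}(q)-4\,\mathbf 1\,q^{\top}$, a rank-one perturbation of a diagonal matrix with $\det A=2^{N_0}\big(\prod_{n\in S}q_n\big)(1-2N_0)\ne0$ (recall $N_0\ge1$), so it contributes no neutral direction. That $A$, and likewise the whole linearization, has real spectrum follows either from conjugating $A$ by $\operatorname{diag}(q)^{1/2}$ into a symmetric matrix, or, more conceptually, from the fact recorded in \eqref{4.lyap}–\eqref{4.dt} that \eqref{4.aveq2} is a gradient system; thus only zero eigenvalues can destroy hyperbolicity.

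Collecting the blocks, the kernel of the linearization at $v^0$ is exactly $T_{v^0}\Bbb T_{v^0}$ enlarged by the span of those $e_n$, $n\notin S$, for which $n^2+\tfrac{\alpha-2N_2}{2N_0-1}=0$; hence $\Bbb T_{v^0}$ fails to be normally hyperbolic precisely when such a $k$ exists, which is \eqref{4.2hyp}. The zero equilibrium is the same computation with $S=\varnothing$: its linearization is $\dot\xi_n=(-n^2+\alpha)\xi_n$, non-hyperbolic exactly when $\alpha=k^2$ for some $k\in\Bbb Z$; and since \eqref{4.2hyp} forces $\alpha=2N_2-k^2(2N_0-1)\in\Bbb Z$, all equilibria are hyperbolic once $\alpha\notin\Bbb Z$. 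The computations in both steps are routine; the only delicate point is the bookkeeping in the second step — cleanly separating the torus directions (the conserved $\operatorname{Im}\zeta_n$) from the genuine centre directions — which is exactly what the substitution $\xi_n=v^0_n\zeta_n$ together with the determinant identity is designed to accomplish.
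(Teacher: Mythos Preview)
Your proposal is correct and follows essentially the same route as the paper. The description of the equilibria is identical: write $-n^2+\alpha-2\|v\|_H^2+|v_n|^2=0$ on the support, sum, and solve. For the hyperbolicity the paper differs only in bookkeeping: instead of working in the complex phase space and using your substitution $\xi_n=v^0_n\zeta_n$ to split off the torus directions $\operatorname{Im}\zeta_n$, the paper first invokes the symmetry reduction \eqref{4.rc} to the real hyperplane $H^{real}$ (so the torus directions are removed from the outset), and then, rather than computing $\det A$, finds the kernel of the $S$-block directly by the one-line trick $\theta_n v_n=2(v,\theta)$, summing over $n\in S$ to get $(v,\theta)=2N_0(v,\theta)$ and hence $(v,\theta)=0$, $\theta_n=0$. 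Your determinant computation and the paper's kernel computation encode exactly the same linear algebra; the advantage of your version is that it makes the identification of the neutral torus directions explicit inside the proof, while the paper outsources that step to the discussion preceding the lemma.
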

\begin{proof} Indeed, let $v$ be a non-zero equilibrium. Then, equations \eqref{4.aveq2} for  $v_n\ne0$ are equivalent to
\begin{equation}\label{4.nII}
-n^2+\alpha-2\|v\|_H^2+|v_n|^2=0.
\end{equation}
Solving these equations (by using that $\|v\|_H^2=\sum_{n,v_n\ne0}|v_n|^2$), we obtain \eqref{4.equII}.
\par
Let us now study the hyperbolicity. As we have already mentioned before, to this end, it is sufficient to consider the case of {\it real}
equilibrium $v\in\Cal R\cap H^{real}$ and real perturbation $\theta\in H^{real}$. For that class of perturbations, the equation of variations reads
\begin{equation}\label{4.eqlin}
\frac {d}{dt} \theta_n =-n^2\theta_n+\alpha\theta_n-2\theta_n\|v\|_H^2-4v_n(v,\theta)+3\theta_nv_n^2,\ \
n\in \Bbb Z.
\end{equation}
Let us try to find a non-zero eigenvector $\theta\in H^{real}$ for the right-hand side of these equations which would correspond to the zero eigenvalue.

For $n$th with $v_n\ne0$, equations \eqref{4.nII} allow to transform the equilibria equation
for \eqref{4.eqlin} as follows
$$
-4v_n(v,\theta)+2\theta_nv_n^2=0
$$
or, since $v_n\ne0$, this gives
\begin{equation}\label{4.strange}
\theta_nv_n=2(v,\theta).
\end{equation}
Taking a sum of that equations, we see that $(v,\theta)=2N_0(v,\theta)$ and consequently $(v,\theta)=0$. Equation \eqref{4.strange} now gives
that $\theta_n=0$ for all $n$ such that $v_n\ne0$. Let us now consider $k\in\Bbb Z$ such that $v_k=0$. Then, equation for $\theta_k$ reads
$$
\theta_k(-k^2+\alpha-2\|v\|_H^2)=\theta_k(-k^2-\frac{\alpha-2N_2}{2N_0-1})=0
$$
which implies $\theta_k=0$ if \eqref{4.2hyp} is not satisfied. This implies $\theta\equiv0$ and non-zero equilibrium $v$ is hyperbolic. Moreover, we see that
\eqref{4.2hyp} cannot be true if $\alpha$ is not integer, so, for the non-integer $\alpha$ any non-zero equilibrium $v$ is automatically hyperbolic.
\par
Finally, the assertion about zero equilibrium is evident and Lemma \ref{Lem4.eqdescription} is proved.
\end{proof}
The next lemma gives the stability of the equilibria found in the previous lemma.

\begin{lemma}\label{Lem5.new.stable} Let $\alpha>0$. Then all equilibria $v\in\mathcal R$ which have two or more non-zero components ($v_m\ne0$ and $v_k\ne0$ for some $m\ne k$) are unstable. The only
stable equilibria are the one component ones: $v_n=0$ for $n\ne k$ $v_k\ne0$ and $k^2<\frac\alpha2$.
\end{lemma}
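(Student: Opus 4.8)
The plan is to linearize \eqref{4.aveq2} about an equilibrium, read off the eigenvalues, and keep careful track of the degeneracy forced by the symmetry torus \eqref{4.tori}. As noted just before Lemma \ref{Lem4.eqdescription}, every equilibrium lies on the orbit of a real one under the symmetry group, and the tangent space of the torus $\mathbb T_v$ at a real equilibrium $v$ is spanned by the purely imaginary vectors $\{i v_n e_n : v_n\ne 0\}$, so it is contained in $i H^{real}$ (see \eqref{4.real}). Hence it suffices to linearize the full complex equation \eqref{4.aveq2} at a real $v$ and split a complex perturbation as $\theta_n=\xi_n+i\eta_n$ with $\xi_n,\eta_n\in\R$. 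First I would carry out this linearization (using that $v$ is real); a direct computation shows that it decouples: the $\xi$-part is exactly the equation of variations \eqref{4.eqlin}, while the $\eta$-part is diagonal, $\frac d{dt}\eta_n=(-n^2+\alpha-2\|v\|^2_H+v_n^2)\eta_n$. By the equilibrium relation \eqref{4.nII} this coefficient vanishes at every $n$ with $v_n\ne0$ --- these are precisely the tangent directions of $\mathbb T_v$ --- and for $n=k$ with $v_k=0$ it equals $\mu_k:=-k^2-\frac{\alpha-2N_2}{2N_0-1}$, after inserting \eqref{4.equII}.

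Next I would treat the $\xi$-part. Since $(v,\xi)=\sum_{m:\,v_m\ne0}v_m\xi_m$ involves only coordinates at the nonzero indices, the subspace $V_0:=\operatorname{span}\{e_n : v_n\ne0\}$ is invariant under \eqref{4.eqlin}; on its complement the dynamics is diagonal with the same eigenvalues $\mu_k$ ($v_k=0$) just found, while on $V_0$, using \eqref{4.nII} to rewrite $-n^2+\alpha-2\|v\|^2_H+3v_n^2=2v_n^2$, the restricted operator is the \emph{symmetric} matrix $L:=2\,\mathrm{diag}(v_n^2)_{v_n\ne0}-4\,v v^{\top}$, a rank-one negative semidefinite perturbation of a positive diagonal matrix. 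Thus normal stability of the torus $\mathbb T_v$ is equivalent to: all eigenvalues of $L$ are negative, and $\mu_k<0$ for all $k$ with $v_k=0$.

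From here the two assertions follow quickly. If $N_0\ge2$, then $\dim V_0\ge2$, so I can choose $\xi\in V_0\setminus\{0\}$ with $(v,\xi)=0$; then $(L\xi,\xi)=2\sum_{n:\,v_n\ne0}v_n^2\xi_n^2>0$, so the largest eigenvalue of the symmetric matrix $L$ is positive (equivalently, by Cauchy interlacing $L$ has exactly one negative and $N_0-1$ positive eigenvalues; equivalently again, $v$ fails to be a local minimum of the Lyapunov functional \eqref{4.lyap} restricted to $H^{real}$). The corresponding eigenvector lies in $H^{real}$, transverse to $\mathbb T_v\subset iH^{real}$, so $\mathbb T_v$ is unstable. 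If instead $N_0=1$, say $v_k\ne0$ (so $N_2=k^2$ and $|v_k|^2=\alpha-k^2$, which forces $k^2<\alpha$), then $V_0$ is one-dimensional with $L=-2v_k^2<0$, and the remaining eigenvalues are $\mu_j=2k^2-\alpha-j^2$ (each occurring twice, from $\xi_j$ and $\eta_j$, $j\ne k$). Their maximum over $j\ne k$ is $-\alpha-1<0$ when $k=0$ and $\mu_0=2k^2-\alpha$ when $k\ne0$; hence all non-torus eigenvalues are negative exactly when $k^2<\alpha/2$ (automatic if $k=0$, since $\alpha>0$), in which case $\mathbb T_v$ is normally hyperbolic and attracting, i.e.\ orbitally asymptotically stable, whereas for $k\ne0$ with $k^2\ge\alpha/2$ the eigenvalue $\mu_0=2k^2-\alpha\ge0$ (strictly positive under the hyperbolicity hypothesis of Lemma \ref{Lem4.eqdescription}) prevents stability. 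Combined with the case $N_0\ge2$, this is exactly the statement of the lemma.

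The step I expect to be the main obstacle is not a computation but the correct bookkeeping of the symmetry-induced degeneracy: one must isolate the $N_0$ genuine zero modes (the $\eta_n$ with $v_n\ne0$, tangent to $\mathbb T_v$) from the modes that actually govern stability, and then run the instability argument through the symmetric block $L$ on $V_0$, rather than trying to stabilize a point that is, by construction, never hyperbolic. Recognizing that $V_0$ is invariant and that $L=2\,\mathrm{diag}(v_n^2)-4vv^{\top}$ has the "positive diagonal minus rank-one positive" form is what makes the $N_0\ge2$ instability immediate.
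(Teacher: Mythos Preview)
Your proof is correct and follows essentially the same approach as the paper: for $N_0\ge 2$ you pick a perturbation in $V_0$ orthogonal to $v$ and show the quadratic form of the linearization is positive (the paper does this with two components and invokes min--max), and for $N_0=1$ you diagonalize and read off the eigenvalues $2k^2-\alpha-j^2$ exactly as the paper does. The only difference is packaging: the paper invokes the earlier reduction to $H^{real}$ and works only with real perturbations \eqref{4.eqlin}, whereas you carry out the full complex linearization, explicitly separate the torus-tangent $\eta$-modes, and organize the nontrivial block as the symmetric matrix $L=2\,\mathrm{diag}(v_n^2)-4vv^{\top}$ on $V_0$; this is a bit more systematic but not a different idea.
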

\begin{proof} Let us prove that any equilibrium with two or more non-zero components is unstable. Indeed, let $v\in\mathcal R$ be such that $v_k\ne0$ and $v_m\ne0$ for some $k\ne m$. Then, equation \eqref{4.nII} holds for $n=k$ and $n=m$. Then, \eqref{4.eqlin} for these components reads
\begin{equation}
\frac{d}{dt}\theta_k=2\theta_k|v_k|^2-4v_k(v,\theta),\ \ \frac{d}{dt}\theta_m=2\theta_m|v_m|^2-4v_m(v,\theta)).
\end{equation}
Thus, if we take an inner product of the right-hand side of \eqref{4.eqlin} with the non-zero vector $\theta$ such
that $\theta_n=0$ if $n\ne k,m$ satisfying $(v,\theta)=0$ (exactly for this reason we need at least two non-zero components of $v$), the result will be strictly positive: $\theta_k^2v_k^2+\theta_m^2v_m^2>0$. By the min-max theorem, this means that the corresponding equilibrium is unstable.
\par
Let us now study the equilibria with only one non-zero component $v_k\ne0$ for some $k\ne0$. Then, \eqref{4.nII} gives
\begin{equation}
|v_k|^2=\alpha-k^2>0.
\end{equation}
Using this equation in order to simplify \eqref{4.eqlin} with $n\ne k$ and using that $v_n=0$, we have
$$
\frac d{dt}\theta_n=\theta_n(\alpha-2|v_k|^2-n^2)=\theta_n(2k^2-\alpha-n^2).
$$
Finally, the equation for the component with $n=k$ reads
$$
\frac d{dt}\theta_k=-2\theta_k v_k^2.
$$
Thus, the linearization \eqref{4.eqlin} at such equilibria is diagonal. Moreover, obviously, all entries on the diagonal will be negative if and only if $2k^2-\alpha<0$, and the lemma is proved.
\end{proof}
As a standard corollary of this lemma and the existence of a global Lyapunov function, we obtain the following
result.
\begin{theorem}\label{Th4.rattr} Let the parameter $\alpha$ be such that all of the equilibria
$v\in\Cal R$ of the equation \eqref{4.aveq2} are hyperbolic (see Lemma \ref{Lem4.eqdescription}).
Then,
\par\noindent
1) The unstable set of any equilibrium $v_0\in\Cal R$
\begin{equation}\label{4.man}
\Cal M_{v_0}^+:=\{v\in H^1, \ \exists v(t),\ t\in\R, \ \text{$v$ solves
\eqref{4.aveq2}},  \ v(0)=v, \ \ \lim_{t\to-\infty}\dist(v(t),\Bbb
T_{v_0})=0\}
\end{equation}
is a finite-dimensional submanifold of the phase space (say, $H^1$).
\par\noindent
2) The global attractor $\widehat{\Cal A}_{GL2}$ for equations \eqref{4.aveq2} is a finite union of
the finite-dimensional unstable manifolds:
\begin{equation}\label{4.regattr0}
\widehat{\Cal A}_{GL2}=\cup_{v_0\in\Cal R}\Cal M^+_{v_0}.
\end{equation}
3) Any trajectory $v(t)$, $t\in\R$, belonging to the global attractor is a
heteroclinic orbit between two equilibria $v_0^+,v_0^-\in\Cal R$
belonging to different tori:
\begin{equation}\label{4.hetero}
\lim_{t\to+\infty}\|v(t)-v_0^+\|_{H^1}=\lim_{t\to-\infty}\|v(t)-v_0^-\|_{H^1}=0,\
\ \Bbb T_{v_0^+}\ne\Bbb T_{v_0^-}
\end{equation}
4) The global attractor $\widehat{\Cal A}_{GL2}$ is exponential, i.e., there
exists a positive constant $\alpha>0$ and a monotone function $Q$
such that
\begin{equation}\label{4.expattr}
\dist_{H^1}(S(t)B,\widehat{\Cal A}_{GL2})\le Q(\|B\|_{H^1})e^{-\alpha t}
\end{equation}
for any bounded subset $B$ of the phase space $H^1$.
\end{theorem}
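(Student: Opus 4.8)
The plan is to reduce everything to a finite-dimensional gradient flow and then to invoke the classical theory of regular attractors (\cite{bv}), exploiting the symmetry group $\Cal S(\phi)$ to handle the degeneracy of the equilibria. First I would note that, by Proposition \ref{Prop3.inv}, both $\widehat{\Cal A}_{GL2}$ and a neighbourhood of it are contained in the invariant finite-dimensional hyperplane $H_D$ with $D=D(\beta)$, on which \eqref{4.aveq2} is a system of ODEs with polynomial (hence $C^\infty$) right-hand side. On $H_D$ the functional $\Cal L$ is, by \eqref{4.dt}, a strict Lyapunov function (non-increasing along trajectories, stationary only at equilibria), so the semigroup on $H_D$ is gradient. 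The set $\Cal R\cap H_D$ is a \emph{finite} union of pairwise disjoint tori $\Bbb T_v$: an equilibrium is determined by its support $S\subset\{|n|\le D\}$ (finitely many choices), by the moduli $|v_n|$ prescribed by \eqref{4.equII}, and by the free phases; and Lemma \ref{Lem4.eqdescription} guarantees, under the genericity assumption on $\alpha$, that each such torus is normally hyperbolic.

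Next I would pass to the real slice $H^{real}$ of \eqref{4.real}. It is invariant under the flow, it is met by every orbit of the symmetry group $\Cal S(\phi)$, and each equilibrium torus intersects it in finitely many points (within $H^{real}$ only sign changes of the nonzero components of $v$ survive as residual group freedom). The crucial point is that the linearisation \eqref{4.eqlin} exhibits these real representatives as genuinely \emph{hyperbolic} rest points of the reduced gradient ODE on $H^{real}\cap H_D$: at a real equilibrium the full linearisation splits into the block \eqref{4.eqlin} on real perturbations and a diagonal block on purely imaginary perturbations, the zero modes of the latter are exactly the directions tangent to the torus, and its remaining spectrum coincides (on the vanishing components) with that of \eqref{4.eqlin}; hence normal hyperbolicity of the torus is equivalent to hyperbolicity of its real points, which Lemma \ref{Lem4.eqdescription} already settled. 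Consequently, on $H^{real}\cap H_D$ one has a finite-dimensional gradient system with finitely many hyperbolic equilibria, to which the classical regular attractor theorem \cite{bv} applies verbatim: the real attractor $\widehat{\Cal A}^{re}_{GL2}$ is a finite union of finite-dimensional unstable manifolds of these equilibria, every complete bounded trajectory either is an equilibrium or is heteroclinic between two of them with strictly decreasing $\Cal L$, and the attraction rate is exponential inside $H^{real}$.

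Finally I would transport this structure to the full complex phase space through the relation $\widehat{\Cal A}_{GL2}=\{\Cal S(\phi)\widehat{\Cal A}^{re}_{GL2}\}$, \eqref{4.rc}. The unstable set $\Cal M^+_{v_0}$ of a torus is the $\Cal S$-orbit of the real unstable manifold of one of its real representatives, and since $\Cal S(\phi)$ acts smoothly and only finitely many phases are relevant on $H_D$, this is a finite-dimensional $C^1$ submanifold — equivalently, one may invoke the unstable manifold theorem for normally hyperbolic compact invariant manifolds directly in $H_D$; this gives 1) and 2). For 3): along any non-equilibrium complete bounded trajectory $\Cal L$ strictly decreases and is bounded, so by LaSalle its $\alpha$- and $\omega$-limit sets lie in $\Cal R$; being connected they lie in single (disjoint) tori, normal hyperbolicity upgrading this to convergence to a single equilibrium on each via the stable/unstable foliation of the normally hyperbolic manifold, and since $\Cal L$ is constant on each torus and strictly smaller at the forward limit those two tori are distinct. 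For 4): given a bounded $B\subset H^1$, write each $b\in B$ as $\Cal S(\phi_b)b^{re}$ with $b^{re}\in H^{real}$ of the same $H^1$-norm; since $\Cal S(\phi_b)$ is an $H^1$-isometry that commutes with the flow and preserves $\widehat{\Cal A}_{GL2}$, and since $S(t)b^{re}$ stays in $H^{real}$, the exponential estimate inside $H^{real}$ immediately yields \eqref{4.expattr}.

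The only genuinely delicate point — where the argument leaves the textbook situation — is the bookkeeping of the degeneracy caused by the infinite-dimensional symmetry group $\Cal S(\phi)$: one must make the ``normally hyperbolic equilibria tori'' picture precise and check that the reduction to $H^{real}$ faithfully reproduces the full non-tangential spectrum of each rest point. Lemma \ref{Lem4.eqdescription} (and the elementary observation about the imaginary-part block noted above) already carries out this spectral analysis, while Proposition \ref{Prop3.inv} keeps everything finite-dimensional; after that, the conclusions are the standard Babin--Vishik consequences for gradient flows.
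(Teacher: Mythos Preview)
Your proposal is correct and follows essentially the same approach as the paper: reduce via the symmetry group $\Cal S(\phi)$ to the real slice $H^{real}$ (where, by Lemma \ref{Lem4.eqdescription}, the equilibria are genuinely hyperbolic), apply the standard Babin--Vishik regular attractor theory there, and then lift the conclusions back to the full phase space through relation \eqref{4.rc}. Your write-up simply supplies more detail --- in particular the block-diagonal splitting of the linearisation and the finite-dimensional reduction via Proposition \ref{Prop3.inv} --- than the paper's brief sketch, but the strategy is identical.
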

Indeed, the standard regular attractors theory (see, e.g., \cites{bv,vcz}) can be applied for the {\it real-valued} version of
equations \eqref{4.aveq2} where all of the equilibria are
hyperbolic in a usual sense and the general case can be treated
after that by expression \eqref{4.rc}.
\par
Let us return now to the initial equation \eqref{4.aver2.gen_f}. To this end, we just need to put $w_n(t)=e^{iA_n(t)}v_n(t)$, where the phases $A_n(t)$ are defined by \eqref{5.new.ph}. In particular, any equilibrium $v\in\mathcal R$ generates a quasi-periodic solution of \eqref{4.aver2.gen_f}. Indeed, in that case
$$
A_n(t)=(\gamma+\omega(n^2-\alpha))t+C_n,\ \ v_n\ne0,
$$
and we see the quasi-periodic motion with no more than two independent frequencies (generated by $\gamma-\omega\alpha$ and $\omega$). Thus, invariant tori of equilibria for equation \eqref{4.aveq2} correspond to the same tori, but filled by quasi-periodic motions on the level of equation \eqref{4.aver2.gen_f}. Analogously, any heteroclinic orbit connecting the equilibria of equation \eqref{4.aveq2} corresponds to the heteroclinic orbits between the aforementioned quasi-periodic motions on the invariant tori. Thus,  Theorem \ref{Th4.rattr} extends to the initial equation \eqref{4.aver2.gen_f} just by replacing the tori of equilibria by the tori filled by the aforementioned quasi-periodic motions.

We conclude the section by treating the non-averaged equation \eqref{3.eq2} as a
small (of order $\eb=1/L$ according to Theorem \ref{Th3.average})
perturbation of the limit equation \eqref{4.aveq2}. For simplicity, we restrict ourselves to the gradient case $\beta=\alpha\in\R$, $\omega=0$, see Remark \ref{Rem4.verylast} below, concerning the general case.
\par
 We first note
that the non-averaged equations do not possess the symmetry group
\eqref{4.symgr} and, consequently the {\it equilibria} tori $\Bbb
T_{w_0}$  disappear (in general) for the
perturbed equations \eqref{3.eq2}.
\par
However, since the (finite-dimensional) invariant tori $\Bbb T_w$, $w\in\Cal R$,
 are normally hyperbolic, they preserve under the small
 perturbations. To be more precise, in the $\eb$-neighbourhood of
 every the non-perturbed torus $\Bbb T_{w_0}\sim \Bbb T^N$ there exists a
 smooth ($C^k$-smooth) invariant torus $\Bbb T_{w_0}(\eb)$
 of the perturbed system \eqref{3.eq2} if $\eb>0$ is small enough.
  In contrast to the non-perturbed case, the points $w=w(\phi)\in\Bbb T_{w_0}(\eb)$, $\phi\in\Bbb T^N$ are no
 more equilibria, but evolve slowly in time and this evolution is
 governed by the appropriate system of ODEs
\begin{equation}\label{4.torper}
 \frac d{dt}\phi=\eb f_\eb(t,\phi),\ \ \ \phi\in\Bbb T^N
\end{equation}
where the vector field $f_\eb$ is of order one as $\eb\to0$:
$$
\|f_\eb(t,\cdot)\|_{C^k(\Bbb T^N)}\le C.
$$
We also mention that, although the non-perturbed system
\eqref{3.eq2} depends explicitly on time, the invariant tori $\Bbb
T_{w_0}(\eb)$ are independent of time since this equation is
invariant under the $\Cal F_L(s)$-transformations.
\par
Furthermore, according to the general theory, the normally hyperbolic invariant manifolds
$\Bbb T_{w_0}(\eb)$ possess the unstable manifolds $\Cal
M_{w_0}^+(\eb)$ which are $\eb$-close to the unstable manifolds
$\Cal M^+_{w_0}$ of the limit system (moreover, by the above
mentioned reasons, they are also independent of $t$).
\par
Finally, applying the standard perturbation theory of regular
attractors we end up with the following result.
\begin{theorem}\label{Th4.regattrper} Let the assumptions of
Theorem \ref{Th4.rattr} hold and let in addition $\beta = \alpha \in \Bbb{R}$ and $\omega = 0$. Then, there exists $\eb_0>0$ such
that, for any $\eb\in[0,\eb_0]$,
\par\noindent
1) Every equilibria torus $\Bbb T_{w_0}$, $w_0\in\Cal R$,
generates (in an $\eb$-neighbourhood) a normally hyperbolic invariant torus $\Bbb
T_{w_0}(\eb)$ of the perturbed system \eqref{3.eq2} and the dynamics on it is governed by
 the slow equations \eqref{4.torper}.
\par
2) The global attractor $\Cal A_{GL2}(L)$ is a finite union of the
finite-dimensional unstable manifolds $\Cal M_{w_0}^+(\eb)$ to
that tori:
\begin{equation}\label{4.peratrstr}
\Cal A_{GL2}(L)=\cup_{w_0\in\Cal R}\Cal M_{w_0}^+(\eb).
\end{equation}
3) Every trajectory $w(t)$ of the perturbed system \eqref{3.eq2}
is a heteroclinic orbit between two trajectories $w^-(t)\in\Bbb T_{w^-_0}(\eb)$
and $w^+(t)\in\Bbb T_{w^+_0}(\eb)$ belonging to different
invariant tori:
\begin{equation}
\lim_{t\to+\infty}\|w(t)-w^+(t)\|_{H^1}=\lim_{t\to-\infty}\|w(t)-w^-(t)\|_{H^1}=0,\
\ w^\pm(t)\in\Bbb T_{w_0}^\pm(\eb), \ \Bbb T_{w_0^-}\ne\Bbb
T_{w_0^+}.
\end{equation}
4) The global attractor $\Cal A_{GL2}(L)$ is exponential, i.e.,
there exist a positive constant $\alpha$ and a monotone function $Q$, such
that,
for every bounded set $B$ of $H^1$,
\begin{equation}\label{4.perexp}
\dist_{H^1}(S_\eb(t)B,\Cal A_{GL2}(L))\le Q(\|B\|_{H^1})e^{-\alpha t},
\end{equation}
where $S_\eb(t)$ is a solving operator for equation \eqref{3.eq2}.
\par\noindent
5) The global attractors $\Cal A_{GL2}(L)$ of the equation \eqref{3.eq2} tend to the global attractor
$\widehat{\Cal A}_{GL2}$ of the limit equation \eqref{3.aveq2} in the sense of symmetric Hausdorff distance.
Moreover, the following estimate holds
\begin{equation}\label{4.attrsym}
\dist^{sym}_{H^1}(\Cal A_{GL2}(L),\widehat{\Cal A}_{GL2})\le
C\(\frac1L\)^\kappa,
\end{equation}
where the positive constants $C$ and $\kappa$ are independent of
$L$.
\end{theorem}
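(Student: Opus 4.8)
The plan is to regard Theorem \ref{Th4.regattrper} as a structural-stability statement: in the gradient case $\beta=\alpha\in\R$, $\omega=0$, the phases \eqref{5.new.ph} vanish, so the limit averaged equation \eqref{3.aveq2} coincides with the real-coefficient system \eqref{4.aveq2}, whose attractor $\widehat{\Cal A}_{GL2}$ has the regular structure of Theorem \ref{Th4.rattr} (a finite union of finite-dimensional unstable manifolds of the equilibria tori $\Bbb T_{w_0}$, exponential attraction, heteroclinic trajectories). One then wants to show that this structure persists under the singular perturbation $\eb:=1/L\to0$. First I would set up the comparison. In the original variable $u$ the perturbed equation \eqref{0.eq2} generates an autonomous, dissipative, uniformly (in $L$) smoothing semigroup which is equivariant with respect to the rotate-and-time-shift symmetry coming from time translations in $u$; in the variable $w$ it becomes the $2\pi/L$-periodic-in-time process $S_\eb(t)$, and Theorem \ref{Th3.average} asserts that on every interval $[0,T]$ the time-$T$ map of $S_\eb$ is $O(\eb e^{KT})$-close, in $H^1$, to the time-$T$ map $S_0(T)$ of \eqref{4.aveq2}. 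Repeating the averaging estimate for the first variational equation upgrades this to $C^1$-closeness on finite time intervals. I stress that the perturbation is \emph{not} small as a vector field --- the difference $M(w)-G(t/\eb,w)$ is of order one for each fixed $t$ --- so all perturbation arguments below are run at the level of the time-$T$ maps, which is the natural setting for the first Bogolyubov theorem.

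Part 1 then follows from the persistence theory of normally hyperbolic invariant manifolds. By Lemma \ref{Lem4.eqdescription} every equilibrium of \eqref{4.aveq2} is hyperbolic under the standing hypothesis on $\alpha$, and by Proposition \ref{Prop3.inv} each $\Bbb T_{w_0}$ is a compact finite-dimensional torus $\Bbb T^N$, $N=N(w_0)$, lying in some $H_D$, whose tangent directions are exactly the neutral directions generated by $\Cal S(\phi)$ and whose normal linearization is hyperbolic; thus $\Bbb T_{w_0}$ is a normally hyperbolic invariant manifold for the limit semiflow. Applying the persistence theorem for normally hyperbolic invariant manifolds (in the form valid for parabolic semiflows, the uniform smoothing of $S_\eb$ supplying the required compactness and cone estimates) to the $C^1$-small perturbation $S_\eb(T)$ of $S_0(T)$, one obtains for $\eb$ small a $C^k$ invariant manifold $\Bbb T_{w_0}(\eb)\cong\Bbb T^N$ of the perturbed process that is $O(\eb)$-close to $\Bbb T_{w_0}$, together with its local unstable manifold $\Cal M^+_{w_0}(\eb)$, which is $O(\eb)$-close to $\Cal M^+_{w_0}$. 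Because the equation is equivariant under the rotate-and-time-shift symmetry and the persisted manifolds are unique in their neighbourhoods, $\Bbb T_{w_0}(\eb)$ and $\Cal M^+_{w_0}(\eb)$ can be taken independent of $t$; writing the perturbed flow in angular coordinates on $\Bbb T^N$ gives the slow system \eqref{4.torper} with a uniformly bounded vector field, since the limit equilibria correspond to zero velocity and the correction is $O(\eb)$.

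Parts 2--4 are then obtained from the perturbation theory of regular attractors (\cite{bv}, \cite{vcz}), in the version adapted to normally hyperbolic tori of equilibria. The inclusion $\cup_{w_0\in\Cal R}\Cal M^+_{w_0}(\eb)\subset\Cal A_{GL2}(L)$ is immediate. For the reverse inclusion and the heteroclinic description one uses that the Lyapunov function \eqref{4.lyap} of \eqref{4.aveq2} is an approximate Lyapunov function for $S_\eb$ in the averaged sense: along a perturbed trajectory, $\Cal L(w(t+T))-\Cal L(w(t))\le -c\int_t^{t+T}\|\Dt\widehat w\|^2_H\,ds+C\eb$, where $\widehat w$ is the $T$-close limit trajectory. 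Combined with the finiteness of the set of tori modulo $\Cal S(\phi)$ and the absence of homoclinic loops in the limit gradient system, this forces every complete bounded trajectory of \eqref{3.eq2} to converge, as $t\to\pm\infty$, to a trajectory on one of the $\Bbb T_{w_0}(\eb)$, with distinct tori at the two ends; together with the structure of the local unstable manifolds this yields Parts 2 and 3. The exponential attraction of Part 4 is then automatic: exponential entry of bounded sets into a neighbourhood of $\cup_{w_0}\Bbb T_{w_0}(\eb)$ follows from the uniform dissipativity and the averaged decay of $\Cal L$, and the transverse hyperbolicity of the tori gives exponential convergence onto the union of unstable manifolds inside such a neighbourhood --- the familiar ``a regular attractor is exponential'' argument, robust under the perturbation.

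Finally, Part 5. The upper semicontinuity bound $\dist_{H^1}(\Cal A_{GL2}(L),\widehat{\Cal A}_{GL2})\le C\eb^{\kappa}$ is obtained as in Corollary \ref{Cor3.attrconv} but keeping track of constants: comparing a complete trajectory of $\Cal A_{GL2}(L)$ on $[-T,0]$ with a limit trajectory through \eqref{3.dist}, then using the exponential attraction \eqref{4.expattr} of $\widehat{\Cal A}_{GL2}$ with rate $\alpha$, gives $\dist_{H^1}(u_0,\widehat{\Cal A}_{GL2})\le C\eb e^{KT}+Q e^{-\alpha T}$, and optimising $T$ yields the bound with $\kappa=\alpha/(\alpha+K)$. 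For the lower semicontinuity $\dist_{H^1}(\widehat{\Cal A}_{GL2},\Cal A_{GL2}(L))\le C\eb^{\kappa}$ one uses Parts 2--3: every point of $\widehat{\Cal A}_{GL2}=\cup_{w_0}\Cal M^+_{w_0}$ lies within $O(\eb)$ of the corresponding $\Cal M^+_{w_0}(\eb)\subset\Cal A_{GL2}(L)$; away from the tori this already gives an $O(\eb)$ estimate, whereas near $\Bbb T_{w_0}$ the perturbed dynamics no longer fixes the points of $\Bbb T_{w_0}(\eb)$, so the $O(\eb)$ closeness of the invariant \emph{manifolds} only translates into an $O(\eb^{\kappa})$ closeness of the \emph{attractors} --- the familiar loss of the Lipschitz rate near neutral directions in the theory of regular attractors. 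Combining the two one-sided bounds gives \eqref{4.attrsym}. The step I expect to be the main obstacle is precisely this treatment of the degenerate equilibria: establishing the normal hyperbolicity of the tori in the infinite-dimensional phase space, applying the persistence machinery to a perturbed process that is close to the limit flow only at the level of its period maps (rather than as a $C^1$-small vector field), and quantifying how the neutral torus directions downgrade the convergence of the attractors from Lipschitz to H\"older; everything else is a routine adaptation of the Babin--Vishik regular attractor theory combined with Theorem \ref{Th3.average}.
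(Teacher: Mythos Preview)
Your proposal is correct and takes essentially the same approach as the paper: the paper does not give a proof at all, stating only that ``although, in contrast to the general theory, we have now invariant normally hyperbolic tori instead of hyperbolic equilibria, the proof of the result repeats word by word the standard arguments and for this reason it is omitted (see, e.g., \cites{bv,efz,vcz} for the details).'' Your outline --- persistence of normally hyperbolic tori applied to the time-$T$ maps (which are $C^1$-close by Theorem \ref{Th3.average} and its variational analogue), followed by the Babin--Vishik regular attractor perturbation scheme and the standard balancing argument yielding the H\"older rate in \eqref{4.attrsym} --- is precisely how those ``standard arguments'' are carried out, and in fact supplies considerably more detail than the paper itself.
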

Indeed, although, in contrast to the general theory, we have now
invariant normally hyperbolic tori instead of hyperbolic
equilibria, the proof of the result repeats word by word the
standard arguments and for this reason it is omitted (see, e.g.,
\cites{bv,efz,vcz} for the details).
\begin{remark}\label{Rem4.last} The last result shows that, for
large dispersion parameter $L$ and $\omega=0$, the dynamics generated by the
Ginzburg-Landau equation \eqref{0.eq2} generates three
different time scales:
\par\noindent
1) rapid oscillations of the phases of the Fourier coefficients (with the frequency proportional to $L$) generated by the group action $\Cal F_L(t)$;
\par\noindent
2) order one heteroclinic motion of their amplitudes close to the limit dynamics of
\eqref{4.aveq2};
\par\noindent
3) slow drift of their phases (of order $\eb=1/L$) governed by equations
\eqref{4.torper}.
We see that the first two types of dynamics are regular. However,
the last small drift on the invariant tori, in principle, may be
chaotic.
\end{remark}
\begin{remark}\label{Rem4.verylast} The analogue of Theorem \ref{Th4.regattrper} remains true in the general case $\beta\in\mathbb C$ and $\omega\ne0$. The only difference is that the invariant tori will be filled not by slow motions, but by the motions $\eb$-close to the quasi-periodic ones.
\end{remark}

\begin{remark}\label{Rem4.turaev} It is natural to ask whether or not the analogous results hold for the  equation \eqref{0.eq1}. In particular, whether or not the averaged equations \eqref{3.aveq1} are in a sense gradient or possess the global Lyapunov function which forbid the complicated dynamics. As shown in \cite{OTZ}, the answer on this question is {\it negative} and the dynamics of \eqref{3.aveq1} is {\it chaotic} at least for some values of the parameters $\gamma$, $\beta$ and $\omega$. Indeed, equations \eqref{3.aveq1} obviously possess the 4-dimensional invariant manifold
\begin{equation}
\hat w=y e_0+v(e_1+e_{-1}),\ \ y,v\in\mathbb C
\end{equation}
and the dynamics on this manifold is given by the equations
\begin{equation}
\begin{cases}
\dot y=\beta y-(1+i\omega)[y(|y|^2+4|v|^2)+2\bar y v^2],\\
\dot v=(\beta-1-i\gamma)v-(1+i\omega)[v(2|y|^2+3|v|^2)+\bar vy^2].
\end{cases}
\end{equation}
Substituting $y=\sqrt{r}e^{i\varphi}$, $v=\sqrt{\rho}e^{i\psi}$, $\eta=2(\psi-\varphi)$ and scaling time by the factor of 2, we arrive at the following 3-dimensional system:
\begin{equation}
\begin{cases}\label{4.terrible}
\dot r=r[\beta-r-4\rho-2\rho(\cos\eta-\omega\sin\eta)],\\
\dot \rho=\rho[\beta-1-2r-3\rho-r(\cos\eta+\omega\sin\eta)],\\
\dot \eta=-\gamma+\omega(\rho-r)+r(\sin\eta-\omega\cos\eta)+2\rho(\sin\eta+\omega\cos\eta)],
\end{cases}
\end{equation}
see \cite{OTZ} for the details. As shown there, there are values of parameters $(\gamma_0,\beta_0,\omega_0)$ for which equations \eqref{4.terrible} possess an equilibrium $(r_0,\rho_0,\eta_0)$ with 3 zero eigenvalues. Then, the standard bifurcation analysis (also performed in \cite{OTZ}) shows that this system possesses the Shilnikov  saddle-focus homoclinic loop at certain values of the parameters and, as a result there is an open region in the space of parameters $(\beta,\gamma,\omega)$ for which the corresponding dynamics generated by \eqref{4.terrible} is chaotic.
\par
 Thus, in contrast to \eqref{3.aveq2}, the averaged equations \eqref{3.aveq1} {\it cannot} be transformed to a gradient system. Moreover, the standard perturbation arguments show that the chaotic dynamics of the limit averaged equations \eqref{3.aveq1} persists at the initial equations \eqref{0.eq1} if $L$ is large enough.
\end{remark}
\renewcommand{\thesection}{}
\renewcommand{\thesubsection}{\arabic{section}.\arabic{subsection}}
\makeatletter
\def\@seccntformat#1{\csname #1ignore\expandafter\endcsname\csname the#1\endcsname\quad}
\let\sectionignore\@gobbletwo
\let\latex@numberline\numberline
\def\numberline#1{\if\relax#1\relax\else\latex@numberline{#1}\fi}
\makeatother

\tocless\section{Acknowledgements}

The work of S.Z. was partially supported by the Russian Foundation of Basic Researches (projects 14-01-00346 and 15-01-03587) and by the grant 14-41-00044 of RSF. The work of E.S.T. was supported in part by the ONR grant N00014-15-1-2333 and the NSF grants DMS-1109640 and DMS-1109645.
\bibliography{References}

\end{document}